\DeclareCiteCommand{\citeyear}{}					 {\bibhyperref{\printdate}}
{\multicitedelim}
{}
\newcommand{\comm}[1]{}
\newtheorem{teorema}{Theorem}
\newtheorem{conjetura}[teorema]{Conjecture}
\newtheorem{proposicion}[teorema]{Proposition}
\newtheorem{corolario}[teorema]{Corollary}
\theoremstyle{definition}
\newtheorem{cuestion}[teorema]{Question}
\newtheorem{warning}[teorema]{Warning}
\newtheorem{construccion}[teorema]{Construction}
\newtheorem{definicion}[teorema]{Definition}
\newtheorem{ejemplo}[teorema]{Example}
\newtheorem{observacion}[teorema]{Observation}
\newtheorem{remark}[teorema]{Remark}
\newtheorem{notacion}[teorema]{Notation}
\DeclareMathOperator{\latt}{latt}
\DeclareMathOperator{\lann}{lann}
\DeclareMathOperator{\ann}{ann}
\DeclareMathOperator{\autoann}{autoann}
\DeclareMathOperator{\autolann}{autolann}
\DeclareMathOperator{\evlattid}{evlattid}
\DeclareMathOperator{\lattid}{lattid}
\DeclareMathOperator{\ev}{ev}
\DeclareMathOperator{\soc}{soc}
\DeclareMathOperator{\Evido}{Evido}
\DeclareMathOperator{\Evidb}{Evidb}
\DeclareMathOperator{\MaxEvid}{MaxEvid}
\DeclareMathOperator{\MinEvid}{MinEvid}
\DeclareMathOperator{\maxevid}{maxevid}
\DeclareMathOperator{\minevid}{minevid}
\DeclareMathOperator{\OverEvid}{OverEvid}
\DeclareMathOperator{\BelowEvid}{BelowEvid}
\DeclareMathOperator{\supp}{supp}
\DeclareMathOperator{\nid}{nid}
\DeclareMathOperator{\sdni}{sdni}
\DeclareMathOperator{\cesrb}{cesrb}
\DeclareMathOperator{\evid}{evid}
\DeclareMathOperator{\linspan}{linspan}
\DeclareMathOperator{\Card}{Card}
\DeclareMathOperator{\car}{char}
\DeclareMathOperator{\id}{id}
\DeclareMathOperator{\natidem}{natidem}
\DeclareMathOperator{\mnonnatidem}{mnonnatidem}
\DeclareMathOperator{\con}{con}
\DeclareMathOperator{\stN}{st}
\DeclareMathOperator{\evinf}{evinf}
\DeclareMathOperator{\evsup}{evsup}
\DeclareMathOperator{\Br}{Br}
\DeclareMathOperator{\alg}{alg}
\DeclareMathOperator{\evalg}{evalg}
\DeclareMathOperator{\rid}{rid}
\DeclareMathOperator{\lid}{lid}
\DeclareMathOperator{\tid}{tid}
\DeclareMathOperator{\einf}{inf}
\DeclareMathOperator{\CFM}{CFM}
\DeclareMathOperator{\Aut}{Aut}
\DeclareMathOperator{\MinId}{MinId}
\DeclareMathOperator{\ExMinId}{ExMinId}
\DeclareMathOperator{\InMinId}{InMinId}
\DeclareMathOperator{\sumspan}{sumspan}
\DeclareMathOperator{\idem}{idem}
\DeclareMathOperator{\minidem}{minidem}
\DeclareMathOperator{\snzsc}{snzsc}
\DeclareMathOperator{\chara}{char}
\DeclareMathOperator{\proj}{proj}
\DeclareMathOperator{\msdnonni}{msdnonni}
\DeclareMathOperator{\NonNatMinId}{NonNatMinId}
\DeclareMathOperator{\NatMinId}{NatMinId}
\let\oldquote\quote
\let\endoldquote\endquote
\begin{document}
\pagenumbering{roman}
\begin{titlepage}
\begin{center}


\includegraphics[width=0.35\textwidth]{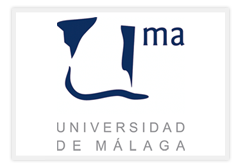}~\\[1cm]

\textsc{\LARGE Universidad de M\'{a}laga}

\textsc{Facultad de ciencias}\\ [1.5cm]

\textsc{\Large Trabajo Fin de Máster}\\[0.5cm]

\noindent\rule{\textwidth}{0.4mm}
{ \huge \bfseries Order Structures around Evolution Algebras: \\[0.4cm]}

\noindent
{ \normalsize \bfseries Inspiring a first idea of the possibility of a schema from the very beginning of a program for a renewed study of induced ordered and binary related sets through different substructures of algebraic structures from a perspective provide by the example of evolution algebras and its diverse socles and ideals }

\noindent\rule{\textwidth}{0.4mm}
\vskip1.5truecm
\textsc{\large\bf Alejandro González Nevado}
\end{center}

\vfill

\begin{center} {\large June 2019}
\end{center}

\end{titlepage}
\newpage
\mbox{}
\thispagestyle{empty}
\begin{titlepage}
\begin{center}

\textsc{\Large Trabajo Fin de Máster}\\[0.5cm]
\bigskip

{ \huge \bfseries Order Structures around Evolution Algebras: \\[0.4cm] }

\noindent
{ \normalsize \bfseries Inspiring a first idea of the possibility of a schema from the very beginning of a program for a renewed study of induced ordered and binary related sets through different substructures of algebraic structures from a perspective provide by the example of evolution algebras and its diverse socles and ideals }

\vskip2truecm

\textsc{\large\bf Alejandro González Nevado}
\end{center}

\vfill

\noindent\emph{Director:} \\
{\bf Prof. PhD. Mercedes Siles Molina}
\bigskip
\bigskip
\bigskip

\noindent \emph{Key words and phrases.}\\
\textsc{evolution algebra, lattice, ideal, socle, idempotent, minimal, natural}

\vskip.3truecm

\end{titlepage}

\tableofcontents

\section*{Resumen}
Este es un trabajo plenamente de investigación. Consideramos álgebras de evolución y sus subestructuras relacionas: ideales de evolución y subálgebras de evolución. Después de exponer algunos de los conceptos en la literatura relacionados con ellas, exploramos las estructuras de orden que surgen en los conjuntos de subestructuras de un álgebra de evolución. Esto nos lleva a la introducción del zócalo de un álgebra de evolución y al estudio de su conexión con algunos elementos distinguidos dentro del álgebra (tales como los elementos idempotentes y los naturales). Finalmente, examinamos las estructuras de orden que emergen entre estos elementos cuando consideramos las subestructuras que generan dentro del álgebra en que se encuentran. Así, desarrollamos dos enfoques basados en la teoría del orden (una mediante el uso de subestructuras distinguidas y otra mediante el uso de elementos distinguidos). Estas dos estrategias pueden ser utilizadas para distintos objetivos. En particular, nosotros las orientamos hacia el estudio del zócalo de un álgebra de evolución.

\section*{Abstract}
This is a thoroughly research work. We consider evolution algebras and their related substructures: evolution ideals and evolution subalgebras. After exposing some of the concepts related to them in the literature, we explore the order structures that arise in the sets of substructures of an evolution algebra. This leads to the introduction of the socle of an evolution algebra and to the study of its connection with some distinguished elements within the algebra (such as idempotents and natural elements). Finally, we examine the order structures that emerge among these elements when we consider the substructures that they generate inside the algebra. Thus, we develop two order-theoretic approaches (one using distinguished substructures and other using distinguished elements). These two strategies could be used in many ways. Particularly, we direct them towards the study of the socle of an evolution algebra.

\section*{Acknowledgements}
The author of this text is absolutely in debt with the director of this work for her valuable suggestions and corrections during the period of writing of it. Mercedes helped during the writing of this text in many ways (from corrections about style, presentation and results and recommendations of bibliography and papers to emotional and mental support when the things were not working well enough at some point because of some theorems or results being too intricate), but the most important thing is the passion that she feels for mathematics and the ability she has to transmit and show it to her students. I can say that I had the luck and the opportunity of working with her these last months, and the time she dedicated to helping me to finish this text is and was priceless. Thank you.

\section*{Responsibility}
The author of this text is the only and last responsible for all the mistakes, imprecision, inaccuracies or vagueness that could appear along this text. He is also the solely responsible for the opinions expressed in this document. All results, concepts and definitions that do not come from the own work of the author are correctly cited and linked to their original sources without exception. When the ideas are inspired by the ideas or texts of other people by means of private or public conversations, communications, books, papers, texts, conferences, lessons or documents as well as those ideas which find a direct inspiration in material published by others we try to recognize and capture it in the text, even when the connection between these ideas may seem distant although existing. We tried our best to express our acknowledgement and debt with those who preceded us in the marvelous endeavour of the mathematical community through the history of this beautiful intellectual activity.

\newpage

\pagenumbering{arabic}
\setcounter{page}{1}

\section{Introduction, history and current state}

This is the first section of this text. Here we expose briefly the developments which we perform in the next sections. To be clear enough, in every paragraph, we introduce first a schematic explanation of the idea or objective pursued in each one of the three following sections as well as a brief justification of the path taken in this section followed by a detailed exposition of the notions, results and concepts obtained through the route traversed along the definitions and theorems. But before that we give some history and situation.

Since a long time ago it is known that non-associative algebras are an interesting and useful framework for modelling inheritance in genetics. The term \textit{genetics algebra} is the name used to refer to the algebras introduced to study this phenomenon. Evolution algebras are a kind of genetic algebras mainly introduced recently by Tian and Vojtechovsky. In the monograph \cite{tian}, Tian establishes many connections of evolution algebras with different parts of pure and applied mathematics. These relations have led to new research topics about these algebras which are not directly connected with genetics. One of these topics is the proper study of the structures, decompositions and classifications of these algebras in purely mathematical terms which begins mainly in the work \cite{cvs}. There the authors study evolution algebras of arbitrary dimension and they organize the different notions of substructures of these algebras in order to provide valuable examples, definitions, results and connections in different situations. In the second part of that work they study the property of nondegeneracy. The results obtained there will be important for us in the last section of this text.

In the second section of this work we explain and expose the main objects that we will study during the rest of the text. Most notions displayed come from the two sources already cited, although we make slight modifications which we explain for the sake of clarity. These modifications affect mainly to some objects and definitions that are introduced with different names in each one of the cited sources. When this happens, we mainly follow the pattern and terminology fixed in \cite{cvs}. In this sense, it is important to read the discussion after Definition \ref{4}, where we explain the problematic about the names referring to the \textit{evolution} substructures of an evolution algebra, and Observation \ref{10}, where we gather and summarize the most important relations between the notions introduced before closing the second section.

In the third section, we use mainly the objects introduced in the second section as the framework to study order structures arising in evolution algebras. These order structures are specially interesting because they push us to find a new path to follow in the same pattern of the orders appearing. While the set of ideals of an arbitrary algebra present an order structure induced by the inclusion between these ideals, when we deal with the set of evolution ideals of an evolution algebra a different type of order structure appears. Of course, this new type of order structures has resemblances with a lattice but it also has its own features and characteristics which provide us with new nuances to study the order theory hidden behind evolution algebras. Thus, in the third section of this text we deal mainly with the work of studying and motivate these order structures. We try to be as general as possible so we usually do not require the relation with which we are working and is defining the structures to be an order provided that this relation respects the usual notions that we need from an order to work in a lattice-like environment such as some kind of good behaviour towards the extrema (infimum and supremum). Thus, we begin this section introducing the concept of \textit{brset} (Definition \ref{11}) and extending the notion of extrema for orders in the obvious way (Definition \ref{12}). After that, inspired by the situation in lattices of evolution algebras between \textit{evolution} and \textit{non-evolution} substructures, we introduce the highly general Definition \ref{14}, which allows us to introduce a nice family of new binary related structures (which particularize in our case to order structures) suitable for conducting a more general an deeper study than the one concerning just the situation in evolution algebras. The direct particularization of Definition \ref{14} to the case of evolution algebras appears in Definition \ref{15}, where the central concept of \textit{evlattice} is introduced. We point out here that different particularizations of Definition \ref{14} could serve as useful tools in the study of the ordered sets of substructures arising in other mathematical environments such as the relation and structure of prime ideals of rings inside the full set of ideals of a ring. In this sense, the work performed in this section is highly inspired by some fundamental tools in the study of algebraic structures through the exploitation of the information provided by their associated order structures such as \textit{Zariski topologies} or \textit{filters}, although the development of our notions is still in an embryonic stage. Here we also generalize the notion of evolution substructures of an evolution algebra to arbitrary algebras in Definition \ref{17} so we are able to speak about the order structures arising among the evolution substructures in a more general setting. We use Definition \ref{Q} and Definition \ref{21} to introduce the notions of evolution ideals generated \textit{over} and \textit{below} a set in an algebra, respectively. In Theorem \ref{PE} we prove that indeed evolution algebras are a source of examples of the binary related structures introduced before, as we suspected since the beginning of this section. After confirming the abundance of examples of our structures, in Definition \ref{27} we define the notion of extracting lattices from these order structures, which leads to Definition \ref{28} when we come closer to our particular case of study. These notions provoke the necessity of ordering the different structures appearing, so in Definition \ref{29} we introduce an order between lattices respecting the extrema. With this, we are able to establish the unicity result in Theorem \ref{30}, which will be fundamentally exploited in the proof of Theorem \ref{33}, whose reformulation in Theorem \ref{35} closes this section.

Finally, in the fourth and last section, we study the behaviour of idempotent and natural elements and their connection with the socle. We do this in an effort to transport to evolution algebras some results that are central in the study of associative algebras and rings. Unfortunately, we see soon that the fact that evolution algebras are not associative algebras makes the possibility of this extension problematic as a consequence of the widespread use of the associativity in the proofs. We find some counterexamples that show that this transportation cannot be performed as directly as we would like and we need to take care of important aspects about the field underlying the algebra and the systems of quadratic equations appearing when we are searching for idempotents which could generate large parts of the algebras. For these reasons we centre our discussions in simple evolution algebras and focus important parts of our research in the field of scalars underlying them. Finally, we present a result about the idempotency and the socle that we hope could serve as an initial point to find more and deeper decompositions of this type through the study and exploitation of the connections among the notions and concepts propounded along this text. Hence, we begin the section remembering some usual concepts in Definition \ref{37} in order to cite Proposition \ref{41} from \cite{cvs} about the relations between the notions introduced in the Definition \ref{37}. After that, we remember the definition of \textit{minimal idempotent} in Definition \ref{42} in order to be able to cite the central and widely known Theorem \ref{F} about the relation between idempotents and minimal one-sided ideals in semiprime associative rings, which is our main leitmotiv through this section. Our objective and main purpose is to find an analogue for this result within the framework of evolution algebras. However, we find soon an obstacle: the lack of associativity hampers the possibility of transporting the reasoning used in the proof of that result to the environment of evolution algebras. In fact, in Example \ref{Exa}, we find directly a counterexample to its literal translation to the language of evolution algebras and it seems that many more counterexamples of this type can be found in a similar way. After that, we proceed with Construction \ref{B} in order to find general conditions implying the presence of non-zero idempotents in simple evolution algebras. From Construction \ref{B} we deduce the convenience of introducing Definition \ref{DDD} and Definition \ref{56} about the systems of equations appearing in the construction. In this discussion, Theorem \ref{THE} allows us to simplify some conditions appearing in the systems of equations of the construction. This lead us finally to Definitions \ref{58} and \ref{59}, where we introduce the fields that will underlie our algebras and the \textit{good} choices of structure matrices allowing the appearance of idempotents in the evolution algebras that they generate. Conjecture \ref{Con} closes this discussion centred mostly around the relations between the fields underlying the algebras and their structure constants. The socle and the evolution socle are defined in Definition \ref{61}, which is strongly tied to the discussion that encompasses Warning \ref{62} and Question \ref{63} about the particular behaviour of these structures. After some discussion about the zoo of evolution algebras and some examples, we begin with the construction of idempotents with Definition \ref{69} and Propositions \ref{70} and \ref{71}. In Theorem \ref{75} we locate minimal idempotents through the use of the different \textit{annihilators} introduced in Definition \ref{72}. In Definition \ref{76} we introduce evolution algebras having minimal ideals which are simple considered themselves as algebras. These algebras will be central for us as we will see in Theorem \ref{80} and Theorem \ref{108}. The introduction of Definition \ref{81} allows us to make interesting computations about minimal ideals in Proposition \ref{82}. After that, we pass to the consideration of \textit{natural elements} in Definition \ref{84} and study their behaviour in Proposition \ref{85}. We study the orthogonality of idempotents from Definition \ref{86} to Theorem \ref{88} before considering the notion of unicity for natural bases of an evolution algebra in Definition \ref{89} and some results related with this concept and the concept of counting natural bases until Corollary \ref{103}. Finally, we introduce notions related to the \textit{socle of natural idempotency} in Definition \ref{104} and Definition \ref{106} which will lead us to the proof of Theorem \ref{108} about a suitable decomposition of the socle closing this work.

In the end, we do not conceive this as a closed work. On the contrary, here we propose the beginning of a program towards the study of the order structures arising around the sets of substructures of algebraic objects. We hope that all the study and research conducted here can be valuable for future work and developments within this field and not just the end of a piece of text.

\section{Preliminaries about algebras, evolution algebras and directly related structures and substructures}

First of all we remember some definitions that we will use through this text. It is very important to establish these concepts precisely enough.

\begin{definicion}\cite{th}\cite{sl}\cite{ma}\cite{rds}\cite{lli}
Given a field $\mathbb{K},$ an \textbf{algebra} $A$ over $\mathbb{K}$ is a vector space over $\mathbb{K}$ with a bilinear map $A\times A\to A, (a,b)\mapsto ab$ called the \textbf{multiplication} of $A.$ For an algebra, we have the following terminology.

\begin{enumerate}
    \item A \textbf{subalgebra}, $B,$ of an algebra over $\mathbb{K},$ $A,$ is a linear subspace which verifies that the multiplication of any two of its elements is again in the subspace. In other words, a subalgebra of an algebra is a subset of elements that is closed under addition, multiplication and scalar multiplication. In symbols, we say that a subset $B$ of an algebra $A$ over $\mathbb{K}$ is a subalgebra if, for every $a,b\in B$ and $k\in\mathbb{K},$ we have that $ab, a+b, ka\in B.$ We denote it by $B\leq_{\alg}A.$
    \item A \textbf{left ideal}, $L,$ of an algebra, $A,$ over $\mathbb{K}$ is a linear subspace of $A$ which verifies that any element of the subspace $L$ multiplied on the left by any element of the algebra $A$ remains in the subspace $L.$ Symbolically, we say that a subset $L$ of the algebra $A$ over $\mathbb{K}$ is a left ideal if, for every $a,b\in L,$ $c\in A$ and $k\in\mathbb{K},$ we have that $a+b, ka, ca\in L;$ we denote it by $L\vartriangleleft_{l}A.$ If, in the last expression, we substitute $ca$ by $ac,$ then we have the definition of a \textbf{right ideal}; we denote it by $L\vartriangleleft_{r}A.$ A \textbf{two-sided ideal}, $L,$ is a subset that is both a left and a right ideal; we denote it by $L\vartriangleleft_{t}A.$ Notice that every ideal is a linear subspace and a subalgebra of $A.$
    \item Given a subset $S$ of the algebra $A,$ we define the following concepts. The \textbf{subalgebra (left ideal, right ideal, (two-sided) ideal) generated by $S$} is the minimum subalgebra (left ideal, right ideal, (two-sided) ideal), $B,$ of $A$ containing $S$ as a subset; we denote it by $\alg(S)$ ($\lid(S),\rid(S),\tid(S)$). Observe that $\alg(S)=\bigcap_{S\subseteq C, C\leq_{\alg}A}C.$ We denote $\mathcal{L}_{\alg}^{A}(S)$ the \textbf{lattice of subalgebras of $A$ containing $S$} with the operations defined as follows: for every $X, Y\in\mathcal{L}_{\alg}^{A}(S),$ $\sup(X, Y):=\alg(X\cup Y)$ and $\einf(X, Y):=X\cap Y$ are the supremum and the infimum, respectively. We also denote the order induced by the inclusion in the lattice $\mathcal{L}_{\alg}^{A}(S)$ by $\leq_{\alg}^{S}$ or just $\leq_{\alg}$ when the subset $S$ is understood. We have the definitions for the corresponding concepts of \textbf{left ideal, right ideal} and \textbf{two-sided ideal} just repeating exactly the previous one but changing the term ``subalgebra" by ``left ideal", ``right ideal" and ``two-sided ideal" and substituting $\alg$ by $\lid,\rid$ and $\tid,$ respectively.
    \item The algebra $A$ is \textbf{commutative} if $ab=ba$ for every $a,b\in A.$
    \item The algebra $A$ is \textbf{associative} if $(ab)c=a(bc)$ for every $a,b,c\in A.$ If $A$ is associative, then, we can write $(ab)c=abc=a(bc)$ without any risk of confusion.
    \item The algebra $A$ is \textbf{flexible} if $(ab)a=a(ba)$ for every $a,b\in A.$
    \item The algebra $A$ is \textbf{power associative} if the subalgebra generated by any element $a\in A,$ $\alg(\{a\}),$ is associative.
\end{enumerate}
\end{definicion}

During all this text, we will consider our algebras to be finite-dimensional as vector spaces by default. Thus, unless we say otherwise explicitly, all our algebras verify that their dimensions as vector spaces are natural numbers.

Once we have set in the previous definitions the basic concepts that we will use, it is the time to fix the main objects concerning our research. First, we begin with the concept of evolution algebra.

\begin{definicion}\cite{tian}
Let $\Lambda$ be an index set and $\mathbb{K}$ a field. An \textbf{evolution algebra} over a field $\mathbb{K}$ is an algebra $A$ over $\mathbb{K}$ provided with a basis $B=\{e_{i}\mid i\in\Lambda\}$ such that $e_{i}e_{j}=0$ for $i\neq j.$ A basis verifying this condition is called a \textbf{natural basis} of $A.$ Fixed a natural basis $B$ in $A,$ the scalars $\omega_{ki}\in\mathbb{K}$ such that $e_{i}^{2}:=e_{i}e_{i}=\sum_{k\in\Lambda}\omega_{ki}e_{k}$ are the \textbf{structure constants} of $A$ \textbf{relative to} $B.$ This allows us to construct the \textbf{structure matrix of} $A$ \textbf{relative to} $B,$ $M_{B}(A):=(\omega_{ki})_{i,k\in\Lambda}.$ We fix $a\in A$ arbitrary such that $a=\sum_{i\in\Lambda}a_{i}e_{i}$ with $a_{i}\in\mathbb{K}$ for all $i\in\Lambda.$ We define the support of $a$ in the basis $B$ as the set $\{i\in\Lambda\mid a_{i}\neq 0\}$ and denote it by $\supp_{B}(a).$ Clearly, $\Card(\supp_{B}(e_{i}^{2}))=\Card(\{k\in\Lambda\mid\omega_{ki}\neq0\})\in\mathbb{N}$ for every $i;$ we call this number the \textbf{cardinal of the expression of the square of $e_{i}$ relative to the basis $B$} or, simply, \textbf{cardinal of the support of the square of $e_{i}$ in the basis $B$} and we denote it by $\cesrb(e_{i},B):=\Card(\supp_{B}(e_{i}^{2})).$ Therefore $M_{B}(A)\in\CFM_{\Lambda}(\mathbb{K}),$ where $\CFM_{\Lambda}(\mathbb{K})$ is the vector space of those matrices $\Lambda\times\Lambda$ over $\mathbb{K}$ for which every column has at most a finite number of non-zero entries.
\end{definicion}

\begin{notacion}
Let $A$ be a vector space and $T\subseteq A.$ We denote $\linspan(T)$ the minimum vector subspace of $A$ containing $T.$
\end{notacion}

It is easy to see that, by definition, every evolution algebra is commutative; and, for this reason, the concepts of left, right and two-sided ideals are exactly the same within this environment. As a consequence, henceforth, we develop the theory only referring to two-sided ideals, which we will call, from now on, just \textit{ideals} (except that we say otherwise). Nevertheless, we cannot say in general that evolution algebras are associative; not even in the cases of small dimension $n\leq 4$ or over finite fields, as we can see in the examples, developments, descriptions and classifications made in the articles \cite{mddcm}, \cite{aym}, \cite{ymm}, \cite{fn}, \cite{ub}, \cite{fm} and \cite{ymv} (more detailed in \cite{ymv2}). Now that we have our main structure, it is the time to define its substructures. We will do this making a discussion of the work of Tian \cite{tian} following the suggestions appearing in \cite[From Definitions 2.4 to Example 2.11]{cvs}.

\begin{definicion}\label{4}
An \textbf{evolution subalgebra} of an evolution algebra $A$ is a subalgebra $A'\leq_{\alg}A$ such that $A'$ is an evolution algebra, i.e., $A'$ has a natural basis; we denote it as $A'\leq_{\evalg}A.$ We say that a subset $S\subseteq A$ verifies the \textbf{subset linear extension condition} if there exists a subset $T\subseteq S$ verifying $S\subseteq\linspan(T)$ such that $T$ can be extended to a natural basis of $A.$ We say that the evolution subalgebra $A'$ verifies the \textbf{evolution subalgebra extension condition} if there exists a natural basis $B'$ of $A'$ which can be extended to a natural basis of $A.$ We say that a subalgebra $A''$ of $A$ verifies the \textbf{subalgebra extension condition} if there exists a natural basis $B''$ of $A''$ which can be extended to a natural basis of $A.$
\end{definicion}

In the seminal work of Tian on evolution algebras \cite{tian}, the names used in the previous definition refer to essentially different substructures, as pointed out in \cite[Discussion between Remark 2.5 and Example 2.6]{cvs}. Nevertheless, we will introduce the concepts defined by Tian in \cite[Definition 4 in 3.1.3 Basic Definitions]{tian} as \textit{extension evolution subalgebras} and \textit{extension evolution ideals} in the same the spirit of \cite[Definitions 2.4]{cvs}.

The following results are immediate from the definitions.

\begin{proposicion}
Let $A'$ be a subalgebra of the evolution algebra $A.$ Then the following assertions are equivalent.
\begin{enumerate}
    \item The subset $A'$ of $A$ verifies the subset linear extension condition (considering $A'$ as a subset of $A$).
    \item The evolution subalgebra $A'$ of $A$ is an evolution subalgebra verifying the evolution subalgebra extension condition (considering $A'$ as an evolution subalgebra of $A$).
    \item The subalgebra $A'$ of $A$ verifies the subalgebra extension condition (considering $A'$ as a subalgebra of $A$).
\end{enumerate}
 \end{proposicion}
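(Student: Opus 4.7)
The plan is to observe that each of the three conditions asserts, up to wording, the existence of a subset $T \subseteq A'$ which is simultaneously a natural basis of $A'$ and extends to a natural basis of the ambient algebra $A$; the only nontrivial thing to check is that a subset of $A'$ which spans $A'$ and is part of a basis of $A$ must actually be a basis of $A'$. I would therefore cycle through the three implications by first handling $(2)\Leftrightarrow(3)$ and then $(1)\Leftrightarrow(2)$.

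The equivalence $(2)\Leftrightarrow(3)$ is essentially a tautology. Both conditions ask for a natural basis of $A'$ that extends to a natural basis of $A$; condition $(2)$ additionally demands that $A'$ be an evolution subalgebra, but by the definition of evolution subalgebra recalled in Definition~\ref{4} the mere existence of a natural basis of $A'$ already witnesses this. So $(3)\Rightarrow(2)$ is free, while $(2)\Rightarrow(3)$ is immediate.

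For $(2)\Rightarrow(1)$ I would take $T := B'$, where $B'$ is the natural basis of $A'$ furnished by $(2)$; then $T \subseteq A'$, $A' = \linspan(T)$, and $T$ extends to a natural basis of $A$, yielding condition $(1)$. For the reverse $(1)\Rightarrow(2)$, pick $T \subseteq A'$ with $A' \subseteq \linspan(T)$ extending to a natural basis $B$ of $A$. Since $T \subseteq B$ and $B$ is linearly independent, $T$ is independent as well; combined with $\linspan(T) \subseteq A'$ (because $A'$ is a subspace containing $T$), this forces $\linspan(T) = A'$, so $T$ is a linear basis of $A'$. For distinct $s, t \in T$ we have $st = 0$ in $A$ because $s$ and $t$ are distinct elements of the natural basis $B$ of $A$, so $T$ is in fact a natural basis of $A'$. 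This $T$ then plays the role of the basis $B'$ required by condition $(2)$.

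There is no real obstacle here: the proof is a direct unpacking of Definition~\ref{4}. The only substantive ingredient is the elementary linear algebra fact that any subset of a linearly independent set is itself linearly independent, which is what turns the spanning subset $T$ appearing in condition $(1)$ into an honest basis of $A'$ and thereby collapses the three conditions onto one another.
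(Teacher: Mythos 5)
Your proof is correct and matches the paper's approach: the paper simply declares this proposition immediate from the definitions, and your argument is exactly that definition-unpacking, with the one mildly substantive point (that the spanning subset $T$ in condition (1) inherits linear independence and orthogonality from the ambient natural basis, hence is a natural basis of $A'$) handled correctly.
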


Using this proposition we can define an important kind of evolution subalgebras: extension evolution subalgebras.

\begin{definicion}\cite[Definitions 2.4]{cvs}
A subalgebra $A'$ of the evolution algebra $A$ which verifies the subalgebra extension condition is called an \textbf{extension evolution subalgebra}.
\end{definicion}

Now, as we just did with the concept of subalgebra, we extend the notion of ideal to the environment of evolution algebras suitably.

\begin{definicion}\cite[Definition 2.8]{cvs}
Let $A$ be an evolution algebra. An \textbf{evolution ideal} of $A$ is an ideal $I$ of $A$ such that $I$ has a natural basis; we denote it as $I\vartriangleleft_{\evid}A.$ We say that the evolution ideal $I$ verifies the \textbf{evolution ideal extension condition} if there exists a natural basis $B'$ of $I$ which can be extended to a natural basis of $A.$ We say that an ideal $J$ of $A$ verifies the \textbf{ideal extension condition} if there exists a natural basis $B''$ of $J$ which can be extended to a natural basis of $A.$
\end{definicion}

As in the case of evolution subalgebras, now we have a straightforward proposition relating the three extension conditions seen so far.

\begin{proposicion}
Let $I$ be an ideal of the evolution algebra $A.$ Then the following assertions are equivalent.
\begin{enumerate}
    \item The subset $I$ of $A$ verifies the subset linear extension condition (considering $I$ as a subset of $A$).
    \item The evolution ideal $I$ of $A$ is an evolution ideal verifying the evolution ideal extension condition (considering $I$ as an evolution ideal of $A$).
    \item The ideal $I$ verifies the ideal extension condition (considering $I$ as an ideal of $A$).
\end{enumerate}
 \end{proposicion}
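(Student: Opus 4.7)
The plan is to prove the three implications $(1)\Rightarrow(2)\Rightarrow(3)\Rightarrow(1)$, mimicking the proof pattern that one would give for the preceding analogous proposition about subalgebras, but adding the verification that the ideal structure of $I$ is preserved throughout.

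For $(1)\Rightarrow(2)$, I would start from a subset $T\subseteq I$ with $I\subseteq\linspan(T)$ such that $T$ extends to a natural basis of $A$. Because $T\subseteq I$ and $I$ is a linear subspace, $\linspan(T)\subseteq I$, hence $I=\linspan(T)$. Elements of $T$ sit inside a natural basis of $A$, so they are linearly independent and satisfy $e_i e_j=0$ for $i\neq j$; this makes $T$ itself a natural basis of $I$. Thus $I$ is an evolution ideal (confirming the evolution ideal structure alongside the ideal structure already assumed), and $T$ is a natural basis of $I$ that extends to a natural basis of $A$, which is exactly the evolution ideal extension condition.

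The implication $(2)\Rightarrow(3)$ is essentially bookkeeping: any natural basis $B'$ of $I$ witnessing the evolution ideal extension condition is in particular a basis of $I$ (regarded just as an ideal) that extends to a natural basis of $A$, so taking $B'':=B'$ provides the ideal extension condition. For $(3)\Rightarrow(1)$, from the ideal extension condition I would take a natural basis $B''$ of $I$ that extends to a natural basis of $A$, and set $T:=B''$; then $T\subseteq I$, $I=\linspan(T)\subseteq\linspan(T)$, and $T$ extends to a natural basis of $A$, which is precisely the subset linear extension condition for the subset $I$.

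There is no genuine obstacle here; the proposition is a direct transcription, for ideals, of the already established proposition about subalgebras, and the argument only uses the linear-algebraic content of the definitions, not the ideal property itself (which is preserved trivially since $I$ is assumed to be an ideal from the outset). The only place requiring a moment's care is $(1)\Rightarrow(2)$, where one must check that $T$ is not merely a spanning set but actually a natural basis of $I$; this follows at once from the fact that $T$ is contained in a natural basis of $A$, hence linearly independent and orthogonal in the sense $e_i e_j=0$ for $i\neq j$.
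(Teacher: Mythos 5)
Your argument is correct and is exactly the routine verification the paper has in mind: the paper states this proposition (like its subalgebra analogue) as immediate from the definitions, and your cycle $(1)\Rightarrow(2)\Rightarrow(3)\Rightarrow(1)$, with the key observation in $(1)\Rightarrow(2)$ that $T\subseteq I$ forces $I=\linspan(T)$ and that a subset of a natural basis of $A$ is automatically linearly independent and orthogonal, hence a natural basis of $I$, is precisely the intended straightforward proof.
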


\begin{definicion}\cite[Definitions 2.4 and 2.8 and Remark 2.10]{cvs}\label{EEI}
An ideal $I$ of the evolution algebra $A$ which verifies the extension condition for ideals is called an \textbf{extension evolution ideal}.
\end{definicion}

Let $A$ be an evolution algebra. It is immediate that every extension evolution ideal of $A$ is an evolution ideal verifying the evolution ideal extension condition. However, as it is shown in \cite[Example 2.11]{cvs}, extension evolution ideals do not coincide in general with evolution ideals, i.e., not every evolution ideal verifies the evolution ideal extension condition. Particularly, it is immediate that every extension evolution subalgebra of $A$ is an evolution subalgebra verifying the evolution subalgebra extension condition, but, as every ideal of $A$ is a subalgebra of $A$, by the same cited example \cite[Example 2.11]{cvs}, extension evolution subalgebras do not coincide with evolution subalgebras, i.e., not every evolution subalgebra verifies the evolution subalgebras extension condition. Moreover, as it is shown in \cite[Example 2.6]{cvs}, an evolution subalgebra is not necessarily an ideal, and, therefore, an evolution subalgebra is not necessarily an evolution ideal either. However, as it is shown in \cite[Proposition 2 in 3.1.4 Ideals of an evolution algebra]{tian} although with a different terminology, every extension evolution subalgebra is an extension evolution ideal, and, therefore, both concepts are the same.
Another interesting conclusion to be pointed out is that not every ideal of an evolution algebra has a natural basis, i.e., not every ideal is an evolution ideal \cite[Example 2.7]{cvs}. And, immediately for this reason, not every subalgebra of an evolution algebra has a natural basis, i.e., not every subalgebra is an evolution subalgebra.
Thus, we recall that ideals and evolution ideals as well as subalgebras and evolution subalgebras do not coincide in general. Furthermore, \cite[Example 2.3]{cvs} and \cite[Example 2.7]{cvs} show that the class of evolution algebras is not closed neither under the operation of taking subalgebras nor under the operation of taking ideals, respectively, as it is mentioned at the beginning of \cite[Subsection 2.1. Subalgebras and ideals of an evolution algebra]{cvs} just before the first example \cite[Example 2.3]{cvs}.

We summarize all the discussion of the work done here, introduced in \cite[Subsections 3.1.3. Basic definitions and 3.1.4. Ideals of an evolution algebra]{tian} and developed in \cite[Subsection 2.1. Subalgebras and ideals of an evolution algebra]{cvs} in the following observation.

\begin{observacion}\label{10}
Let $A$ be an evolution algebra. The notions of extension evolution ideal, evolution ideal, ideal, subalgebra and evolution subalgebra are essentially pairwisely different. However, extension evolution subalgebras and extension evolution ideals are the same concept.
\end{observacion}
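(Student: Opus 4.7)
The plan is to split the observation into its two assertions and handle them separately.

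For the coincidence of extension evolution subalgebras with extension evolution ideals, the nontrivial direction is that every extension evolution subalgebra is in fact an ideal. Let $A'$ be an extension evolution subalgebra of $A$ with a natural basis $B' = \{e_{i} \mid i \in \Lambda'\}$ that extends to a natural basis $B = \{e_{i} \mid i \in \Lambda\}$ of $A$, where $\Lambda' \subseteq \Lambda$. For arbitrary $a = \sum_{i \in \Lambda'} a_{i} e_{i} \in A'$ and $b = \sum_{j \in \Lambda} b_{j} e_{j} \in A$, the orthogonality $e_{i} e_{j} = 0$ for $i \neq j$ collapses the product to $ab = \sum_{i \in \Lambda'} a_{i} b_{i} e_{i}^{2}$. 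Since $A'$ is a subalgebra containing $e_{i}$ for every $i \in \Lambda'$, each square $e_{i}^{2}$ lies in $A'$, and therefore $ab \in A'$. This proves $A'$ is an ideal and therefore an extension evolution ideal.

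For the pairwise distinctness of the five classes, I would first record the trivial inclusions: extension evolution ideals $\subseteq$ evolution ideals $\subseteq$ ideals $\subseteq$ subalgebras, and evolution ideals $\subseteq$ evolution subalgebras $\subseteq$ subalgebras (the second chain requiring the observation that an evolution ideal, being an ideal possessing a natural basis, is in particular a subalgebra possessing a natural basis). The strictness of each comparable inclusion, together with the incomparability of ideals and evolution subalgebras, is then witnessed by the counterexamples collected in the preceding discussion: \cite[Example 2.11]{cvs} separates extension evolution ideals from evolution ideals; \cite[Example 2.7]{cvs} exhibits an ideal that is neither an evolution ideal nor an evolution subalgebra; \cite[Example 2.6]{cvs} exhibits an evolution subalgebra that is not an ideal, simultaneously separating evolution ideals from evolution subalgebras and evolution subalgebras from ideals; \cite[Example 2.3]{cvs} provides a subalgebra that is not an evolution subalgebra. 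The remaining unordered pairs are then distinguished by transitivity and by combining these examples.

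The main obstacle is not technical but combinatorial bookkeeping: one must methodically verify all ten unordered pairs to be sure no subtle coincidence is missed, paying special attention to the interface between ideals and evolution subalgebras, where neither inclusion holds. Beyond the orthogonality calculation used for the coincidence of the two extension classes and the collection of examples imported from \cite{cvs}, no further nontrivial ingredients are required.
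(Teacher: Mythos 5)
Your proposal is correct, and for the distinctness half it follows the same route as the paper: the observation there is explicitly a summary of the preceding discussion, which disposes of the ten pairs by citing exactly the examples you invoke from \cite{cvs} (Example 2.11 for extension evolution ideals versus evolution ideals and, via ideals being subalgebras, for the subalgebra analogue; Example 2.6 for an evolution subalgebra that is not an ideal; Example 2.7 for an ideal without a natural basis, hence neither an evolution ideal nor an evolution subalgebra; Example 2.3 for a subalgebra that is not an evolution subalgebra). Where you genuinely diverge is the coincidence of extension evolution subalgebras and extension evolution ideals: the paper does not prove this but imports it from Tian \cite[Proposition 2 in 3.1.4]{tian}, whereas you reprove it directly via the orthogonality collapse $ab=\sum_{i\in\Lambda'}a_{i}b_{i}e_{i}^{2}$ together with $e_{i}^{2}\in A'$, which is exactly the content of the cited proposition. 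Your version buys self-containedness (and makes visible that commutativity of evolution algebras turns the one-sided computation into a two-sided ideal statement), at the cost of redoing a known result; the paper's version is shorter but leaves the key step external. Both treatments are sound, so there is no gap, only the mild bookkeeping obligation you already flag of checking that the quoted examples exhaust all ten unordered pairs.
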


All these relations between the six substructures exposed lead us to consider some of the constructions and structures that we discuss and study during the next sections.

\section{A first approach to the lattice-like structures arising in the different sets of substructures introduced in the previous section of an evolution algebra and a suggestion to go beyond the evolution algebra framework}

As we have seen in the previous section, in the general setting of algebras and rings, ideals are subsets of these structures that are special in the important sense that we can define easily their sums, intersections and products, which are also ideals. In fact, ideals are a kind of \textit{completions} of subalgebras that allow us to do this. But with evolution ideals we lose again these important features of ideals and we will try to recover them in some sense related to the behaviour of the order structures induced by the inclusion in the sets formed by all the substructures of one particular kind that one given algebra possesses. Nevertheless, we develop the next order-theoretic notions inspired by evolution ideals of evolution algebras because our purpose during the next section will be developing the theory within this frame so we are able to describe and study the socle of an evolution algebra using them.

In the previous section we have defined some substructures related to an evolution algebra and to an arbitrary algebra. Now we want to study how, given a fixed evolution algebra $A,$ these substructures interact.

As we want to be as general as possible, we introduce a concept that generalizes the term \textit{poset} beyond partial ordered sets to a broader class of relations. Also, we extend some of the usual constructions and operations performed with partial orders to general binary relations. For our immediate next developments we are fundamentally inspired and in debt with the research in relations exposed in the books \cite{rel1}, \cite{rel2} and \cite{rel3}.

\begin{definicion}\label{11}
Let $(\Omega,R)$ be a set $\Omega$ together with a binary relation $R.$ We say that $(\Omega,R)$ (or simply $\Omega$ if $R$ is clear from the context) is a \textbf{binary related set} or \textbf{brset}. We denote $\con(R)$ the \textbf{converse} relation of $R$ and $\stN(R)$ the \textbf{strict} relation of $R.$ That is, $(a,b)\in R$ if and only if $(b,a)\in\con(R),$ and $(a,b)\in \stN(R)$ if and only if $(a,b)\in R$ with $a\neq b.$ Usually, as it is common with binary relations, we express these memberships as $aRb,$ $b\con(R)a$ and $a\stN(R)b,$ respectively.
\end{definicion}

For our purpose, we slightly redefine the usual ``supremum" and ``infimum" to extend them from orders to arbitrary binary relations. We mention that, although this is not developed here, it is obvious that the same generalization process can be performed for other distinguished elements within orders important in order theory towards a more general treatment of binary relations. These generalization could be important and lead to some new and unknown aspects of binary relations and orders. However, we do not develop this work here because, in some cases, it becomes more complicated and tedious, and because the two notions presented and generalized in what follows fulfill our objectives for now. The generalization is direct.

\begin{definicion}\label{12}
Let $(\Omega,R)$ be a brset and $S\subseteq\Omega.$ We define the next objects.
\begin{enumerate}
    \item A \textbf{lower bound} of the subset $S$ is an element $a\in\Omega$ such that $aRx$ for all $x\in S.$ A lower bound $a$ of $S$ is called an \textbf{infimum} of $S$ if, for all lower bounds $y\in\Omega$ of $S,$ we have that $yRa.$
    \item An \textbf{upper bound} of the subset $S$ is an element $b\in\Omega$ such that $xRb$ for all $x\in S.$ An upper bound $b$ of $S$ is called a \textbf{supremum} of $S$ if, for all upper bounds $z\in\Omega$ of $S,$ we have that $bRz.$
\end{enumerate}

\end{definicion}

To continue, we need first some notation.

\begin{notacion}
Let $\mathcal{P}(\Omega)$ denote the power set of $\Omega,$ i.e., the set of all subsets of $\Omega.$ Let $\Br(\Omega,R)$ (we usually omit the relation $R$ when it is clear from the context and simply write $\Br(\Omega)$) denote the set of all non-empty subbrsets of $\Omega,$ i.e., all the non-empty subsets $A$ of $\Omega$ where $A$ is automatically considered equipped with the restriction of the relation $R$ to this subset, $R\restriction_{A},$ so we have the brset $(A,R\restriction_{A}).$ We also denote $\binom{E}{n}$ the set of all subsets of $E$ having $n$ elements.
\end{notacion}

For what follows in this paper, we could just have taken $\mathcal{P}(\Omega)$ instead of introducing $\Br(\Omega,R).$ Nevertheless, we consider that the notions and concepts presented here could be extended in some future research to domains where the induced substructures in the set of parts (subsets) is more determinant for the development of the theory related to these objects. Thus, we keep this more suggestive name.

Now we remember the definition of lattice and we expand it to cover our interests. As this definition is classically supported in the definition of supremum and infimum, the previous redefinition of these concepts is enough for our general purposes and we can just reproduce the classical definition having in mind that our infima and suprema are now more general. Also, for clarity, instead of $R$ we will use the usual order-relation symbols remembering that we are considering them in a broader sense. Thus, the beginning part about lattices in our next definition comes from \cite[Definition 6.14, Section 1.6. Neighboring elements. Bounds]{rel4} or \cite[Definition 2.4(i)]{rel5}, \cite[Chapter 2]{rel6}. However, we should consider it having a broader sense than the sense given in these books, which are main references in order theory. In addition, we extend the definition of ``lattice" towards new concepts which help us covering new perspectives in the order-theoretic structures emerging inside the sets of substructures of an evolution algebra, as we will see soon.

We begin with a highly general construction.

\begin{definicion}\label{14}
Let $(L,\leq_{L})$ be a brset and $H$ a set of maps $h\colon A\to\Br(L)$ defined from subsets $A$ of $\Br(L)$ to $\Br(L).$ We say that $(L,\leq_{L},H)$ is a \textbf{mapped brset}. Consider a subbrset $(E,\leq_{E}),$ with $E\subseteq L$ and $\leq_{E}$ the restriction of $\leq_{L}$ to $E,$ $F$ a set of maps $f\colon B\to\Br(E)$ defined from subsets $B$ of $\Br(E)$ to $\Br(L),$ and a logical property $\varphi(L,\leq_{L}, H, E, \leq_{E}, F)$ depending on the previous objects. Then we say that $(E,\leq_{E},F)$ is a \textbf{$\varphi$-brset over the mapped brset $(L,\leq_{L},H)$} or that $(E,\leq_{E})$ is an \textbf{$F,\varphi$-brset over the mapped brset $(L,\leq_{L},H)$} if $\varphi(L,\leq_{L}, H, E, \leq_{E}, F)$ is true for this choice of parameters viewed as logical variables for the logical property $\varphi.$
\end{definicion}

Obviously, we are just assembling one mapped brset over another one which is acting as the basis of the assembly. There are three direct ways of generalizing this concept and go beyond our previous definition. First of all, we could assemble more than just one mapped set to the basis mapped set $(L,\leq_{L},H).$ Also, we could assemble mapped brsets without a basis as $(L,\leq_{L},H)$ and letting all the work of the connection among components of the assembly to the logical property $\varphi.$ Finally, we could assemble brsets in sequence (or, more general, following the order of the inclusion between subsets of a set) so we assemble first $E$ over the basis $L$ and then we use $E$ as a basis to assemble another subset $E'\subseteq E$ and continuing this process of construction by induction. We will not develop these further generalization of the previous concept here because it is beyond the scope of this work and the definition just introduced is enough for our purposes of showing the importance of consider the structure arising within certain sets of substructures of an evolution algebra. Furthermore, we note that the study of these structures arising in the equivalent constructions of the sets mentioned here in rings, groups and other algebraic structures and the development of the construction of assemblies of mapped brsets could be an interesting object of study and research as well as the relation and connection between these two developments; here we will just show an small superficial part of what could be a hidden iceberg in the ocean of algebra.

After some trivial particularizations applied to the objects used and established in the previous definition, our main objects of study lead us to the next more concrete concepts. Thus, the next definition is just a suitable and useful particular case of the previous more general definition.

\begin{definicion}\label{15}
A brset $(L,\leq_{L})$ is called a \textbf{lattice} if each two-element subset $\{a,b\}\subseteq L$ has supremum and infimum. When a brset $(L,\leq_{L})$ just verifies that each two-element subset $\{a,b\}\subseteq L$ has supremum (infimum) we call it a \textbf{supremum-lattice (infimum-lattice)}. Consider a subbrset $(E,\leq_{E}),$ with $E\subseteq L$ and $\leq_{E}$ the restriction of $\leq_{L}$ to $E.$

\begin{enumerate}
    \item Consider a map $f\colon\binom{E}{2}\to\Br(L).$ The triple $(E,\leq_{E},f)$ is called an \textbf{infevlattice} over the infimum-lattice $(L,\leq_{L},\inf)$ if the infimum $i$ in $(L,\leq_{L})$ of any two-element subset $\{a,b\}\subseteq E$ verifies that, for every $x\in f(\{a,b\}),$ $x\leq_{L} i.$ We call $f(\{a,b\})$ the \textbf{front of infima} of the set $\{a,b\}$ in $(E,\leq_{E},f).$
    \item Consider a map $g\colon \binom{E}{2}\to\Br(L).$ The triple $(E,\leq_{E},g)$ is called an \textbf{supevlattice} over the supremum-lattice $(L,\leq_{L},\sup)$ if the supremum $s$ in $(L,\leq_{L})$ of any two-element subset $\{a,b\}\subseteq E$ verifies that, for every $x\in g(\{a,b\}),$ $s\leq_{L} x.$ We call $g(\{a,b\})$ the \textbf{rear of suprema} of the set $\{a,b\}$ in $(E,\leq_{E},g).$
    \item The quadruple $(E,\leq_{E},f, g)$ is called an \textbf{evlattice} over the lattice $(L,\leq_{L},\inf,\sup)$ if $(E,\leq_{E},f)$ is an infevlattice over the infimum-lattice $(L,\leq_{L},\inf)$ and $(E,\leq_{E}, g)$ is a supevlattice over the supremum-lattice $(L,\leq_{L},\sup).$
\end{enumerate}

We call a map like $f$ an \textbf{evolved infimum} and a map like $g$ an \textbf{evolved supremum}. When there is no possible confusion about which are these functions we denote them by $f:=\evinf$ and $g:=\evsup.$
\end{definicion}

These concepts will allow us to formalize rigorously the study and research about posets that come from subsets of lattices where the infimum or the supremum which comes from the lattice could have been lost due to the definition of the new poset. This is the case of the lattice of ideals or subalgebras of an evolution algebra when we pass from considering all the ideals or subalgebras to consider only evolution ideals or evolution subalgebras, respectively. In fact, it is easy to see that there are similar problems in both directions. In the posets of evolution ideals and evolution subalgebras, the infimum of two elements does not necessarily exist and, if it exists, it does not necessarily coincide with the infimum within the bigger lattices of ideals and subalgebras, respectively. And, also in the other direction, in the posets of evolution ideals and evolution subalgebras, the supremum of two elements does not necessarily exist and, if it exists, it does not necessarily coincide with the supremum within the bigger lattices of ideals and subalgebras, respectively.

We introduce a terminology for the pairs were we lost the essential elements of the lattice (the supremum and the infimum) when we work within the \textit{evolution} posets associated to an evolution algebra. Identifying these pairs could be a useful way to study the structure of an evolution algebra.

\begin{definicion}
Let $(L,\leq_{L},\inf,\sup)$ be a lattice and $(E,\leq_{E})$ a subbrset of $(L,\leq_{L}),$ i.e., where $E\subseteq L$ and $\leq_{E}$ is the restriction of $\leq_{L}$ to $E.$ We say that a pair $a,b\in E$ is a \textbf{lattice breakup} of $E$ as a subbrset of $L$ if $\sup(\{a,b\})\notin E.$ We say that a pair $a,b\in E$ is a \textbf{lattice breakdown} of $E$ as a subbrset of $L$ if $\inf(\{a,b\})\notin E.$
\end{definicion}

It is easy to appreciate how our previous definitions can branch into many different directions and paths. For this reason, our next work shows us how to relate the two previous concepts to build meaningful tools in our research on evolution algebras. First of all, we unveil how we can construct proper lattices out of the (merely) posets with which the \textit{evolution} versions of our subobjects provide us. At the same time, these constructions supply us with some good examples of the way these concepts are developed in more concrete objects and manifestations closer to the frame of evolution algebras. This help us to understand better their behaviour and to appreciate their possible utility in other fields.

Firstly, we set up the language necessary to make easy to see that we are effectively before an example of an evlattice whenever we work with the poset of evolution ideals or the poset of evolution subalgebras of an evolution algebra. These evlattices are obviously built over the only two possible lattices that come to our mind when we think about these posets, that is, the lattice of ideals or the lattice of subalgebras, respectively, of the evolution algebra considered viewed as a mere algebra. We establish this rigorously through the next definitions.

Now, we introduce the concepts of being generated by a subset. In the case of algebras, as the posets of its substructures are lattices, the existence of infima and suprema with easy expressions in terms of intersections or sums, respectively, made easy to construct the substructures generated by some special subsets, as sums, unions or intersections of other substructures of the same type which are particularly important in the development of the theory. When we are within the frame of evolution substructures of algebras these characterizations are not available and the work is harder. This makes also harder to understand and less natural in a first view the definitions.

Now first we note that it is possible to speak about \textit{evolution} substructures in a general algebra. That is, it is clear that we do not need to be inside an evolution algebra to define an evolution ideal or evolution subalgebra of this algebra. It is clear that it is enough to ask these substructures to have at least one natural basis. Thus, we extend the definitions of evolution ideals and evolution subalgebras given before to the framework of general algebras in the next definition.

\begin{definicion}\label{17}
Let $A$ be an algebra. We say that an ideal of $A$ or a subalgebra of $A$ is an \textbf{evolution ideal} of $A$ or an \textbf{evolution subalgebra} of $A$ if it has a natural basis.
\end{definicion}

In the next definition, we introduce the concept of being generated \textit{over} a subset. Now the prepositions are important because as we shall see, in contrary to the case of mere substructures of algebras, where the concepts of being generated over or below some specific sets are easy to understand and immediate, this is not the case within the frame of evolution substructures of algebras. Therefore, this situation requires a more careful research which leads us to study in more detail the way some sets generate our substructures and in which sense this generation takes place. We follow first the \textit{over} construction and, later, we will describe the \textit{below} construction in a very similar way.

\begin{definicion}\label{Q}
Let $A$ be an algebra over a field $\mathbb{K}$ and $S$ a subset of $A.$ An \textbf{evolution (two-sided) ideal generated over $S$} is a minimal evolution (two-sided) ideal $B$ of $A$ containing $S$ as a subset (that is, $S\subseteq B\subseteq A$); we denote the set of all minimal evolution ideals of $A$ containing $S$ as $\MinEvid(S),$ and, if this set has only one element, we call this element $\minevid(S).$ Observe that $\minevid(S)=\bigcap_{S\subseteq C, C\vartriangleleft_{\evid}A}C.$ The set of \textbf{evolution ideals generated over $S$}, $\Evido(S),$ is the set of all evolution ideals containing $S.$
\end{definicion}

As we can appreciate at a first sight, whenever the element $\minevid(S)$ exists for every $S\subseteq A$, we are in front of a very special kind of algebras; an algebra whose posets of evolution ideals have a really good behaviour with respect to the infima. The posets of evolution ideals of these algebras behave in a similar way with respect to the infimum-semilattice structure of the respective poset of ideals of an algebra. As these are special, we choose a name for them.

\begin{definicion}
Let $A$ be an algebra over a field $\mathbb{K}$ and $S$ a subset of $A$. We say that $A$ is an \textbf{infimum-semilatticed algebra for evolution ideals} if, for every $S\subseteq A,$ there exists $\minevid(S).$
\end{definicion}

We remember the following notation.

\begin{notacion}
Let $A$ be an algebra and $\mathcal{L}_{\evid}^{A}$ the poset of evolution ideals of $A$ when we consider the order induced by the inclusion. We denote the order induced by the inclusion in this set as $\leq_{\evid}.$
\end{notacion}

We have introduced the terminology to describe the behaviour of the order structure in the lattices of evolution substructures of an algebra with respect to one of the directions of the order given by $\leq_{\evid}$ in the corresponding poset of evolution substructures. Furthermore, we have set some terminology which allows us to describe some kind of relation between the \textit{lower bounds} parts of these order structures of ideals. It is immediately evident that those names suggest to us the way to follow in the next definition, which is the \textit{upper bounds} counterpart of our last definition.

Now, we introduce the concept of being generated below a subset. In the case of ideals of algebras, as the posets of its ideals are lattices and the existence of suprema with easy expressions in terms of sums is guaranteed, this concept is never introduced because is not very useful. However, when we are within the frame of evolution ideals of algebras, these characterizations are not available and we need to study thoroughly what is the behaviour of the construction of ideals below a determined set. It is also likely that a more comprehensive study of these constructions can show us new nuances in the study of mere algebras and lead to a development of a true generation theory within algebras which could be able to study these generation sets and methods in a more exhaustive way.

\begin{definicion}\label{21}
Let $A$ be an algebra over a field $\mathbb{K}$ and $S$ a subset of $A.$ An \textbf{evolution (two-sided) ideal generated below $S$} is a maximal evolution (two-sided) ideal $B$ of $A$ contained in $S$ as a subset (that is, $B\subseteq S\subseteq A$); we denote the set of all maximal evolution ideals of $A$ contained in $S$ as $\MaxEvid(S),$ and, if this set has only one element, we call this element $\maxevid(S).$ Observe that $\maxevid(S)=\bigcup_{C\subseteq S, C\vartriangleleft_{\evid}A}C.$ The set of \textbf{evolution ideals generated below $S$}, $\Evidb(S),$ is the set of all evolution ideals contained in $S.$
\end{definicion}

Again, whenever the element $\maxevid(S)$ exists for every $S\subseteq A$, we are in front of a very special kind of algebras; an algebra whose posets of evolution ideals have a really good behaviour with respect to the suprema. The posets of evolution ideals of these algebras behave in a similar way with respect to the supremum-semilattice structure of the respective posets of ideals of an algebra. As these are special, we choose a name for them.

\begin{definicion}
Let $A$ be an algebra over a field $\mathbb{K}$ and $S$ a subset of $A.$ We say that $A$ is a \textbf{supremum-semilatticed algebra for evolution ideals} if, for every $S\subseteq A,$ there exists $\maxevid(S).$
\end{definicion}

Finally, we put together these properties in both sides to obtain the next definition.

\begin{definicion}
Let $A$ be an algebra over a field $\mathbb{K}$ and $S$ a subset of $A.$ We say that $A$ is a \textbf{latticed algebra for evolution ideals} if, for every $S\subseteq A,$ there exist $\minevid(S)$ and $\maxevid(S).$
\end{definicion}

We let here as an open question the possibility of characterizing and classifying the supremum-semilatticed (infimum-semilatticed, latticed) algebra for evolution ideals. As a side question, it could also be interesting to consider and study algebras verifying other configurations of appearance of evolution ideals in its lattice of ideals. Also, we could consider, in addition, the posets of subalgebras and perform the same study with similar operations in this lattice to pass afterwards to play with more interrelations between these lattice structures and the corresponding posets as we did with the definitions above.

Here, the concepts introduced above will be enough for our purposes. We introduce and remember some notation.

\begin{notacion}
Let $A$ be an algebra. We call $\lattid(A):=\mathcal{L}_{\id}^{A}(\{0\})$ to the lattice of ideals of $A$ (containing $0$) with the operations given by $\sup(\{X,Y\}):=\id(X\cup Y)$ and $\inf(\{X,Y\}):=\id(X\cap Y)=X\cap Y,$ as usual and as it is known. Furthermore, we call $\evlattid(A):=\mathcal{L}_{\evid}^{A}(\{0\})$ to the subposet of $\lattid(A)$ which contains only the evolution ideals.
\end{notacion}

Thus, we have immediately the next theorem which gives us our first example of an evlattice.

\begin{teorema}\label{PE}
Let $A$ be an evolution algebra. The quadruple $$(\evlattid(A),\subseteq,\evinf,\evsup),$$ where $$\evsup(\{X,Y\}):=\MinEvid(X\cup Y) \mbox{\ and\ }\evinf(\{X,Y\}):=\MaxEvid(X\cap Y),$$ is an evlattice over the lattice $(\lattid(A),\subseteq,\inf,\sup).$
\end{teorema}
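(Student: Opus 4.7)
The plan is to unfold Definition \ref{15} and verify its two clauses directly from the definitions of $\MinEvid$ and $\MaxEvid$ in Definitions \ref{Q} and \ref{21}. Since every evolution ideal is in particular an ideal, we have $\evlattid(A)\subseteq\lattid(A)$ with $\subseteq$ on $\evlattid(A)$ being the restriction of $\subseteq$ on $\lattid(A)$, so $(\evlattid(A),\subseteq)$ is a subbrset of $(\lattid(A),\subseteq)$ as Definition \ref{15} requires. I would also recall (without proof) the classical fact that $(\lattid(A),\subseteq,\inf,\sup)$ is a lattice with $\inf(\{X,Y\})=X\cap Y$ and $\sup(\{X,Y\})=\id(X\cup Y)$.

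Next I would check that $\evinf$ and $\evsup$ are honest maps into $\Br(\lattid(A))$, i.e., that $\MaxEvid(X\cap Y)$ and $\MinEvid(X\cup Y)$ are non-empty for every $\{X,Y\}\in\binom{\evlattid(A)}{2}$. Since all algebras here are finite-dimensional by default, this follows from a dimension argument: $\{0\}$ is always an evolution ideal contained in $X\cap Y$, and among the non-empty family of evolution ideals contained in $X\cap Y$ one of largest dimension is maximal; dually, $A$ is itself an evolution ideal containing $X\cup Y$, and among the non-empty family of evolution ideals containing $X\cup Y$ one of smallest dimension is minimal.

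For the infevlattice condition, fix $\{X,Y\}\in\binom{\evlattid(A)}{2}$ with infimum $i=X\cap Y$ in $\lattid(A)$. By Definition \ref{21}, every $Z\in\MaxEvid(X\cap Y)$ is an evolution ideal contained in $X\cap Y$, so $Z\subseteq i$, which is exactly the infevlattice clause. For the supevlattice condition, fix $\{X,Y\}\in\binom{\evlattid(A)}{2}$ with supremum $s=\id(X\cup Y)$ in $\lattid(A)$. By Definition \ref{Q}, every $Z\in\MinEvid(X\cup Y)$ is an evolution ideal containing $X\cup Y$; in particular $Z$ is an ideal containing $X\cup Y$, and the minimality of $s=\id(X\cup Y)$ among all ideals containing $X\cup Y$ forces $s\subseteq Z$, which is the supevlattice clause.

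The only mildly technical step is the non-emptiness of the \emph{front of infima} $\MaxEvid(X\cap Y)$ and the \emph{rear of suprema} $\MinEvid(X\cup Y)$ (needed so $\evinf$ and $\evsup$ really land in $\Br(\lattid(A))$); once finite-dimensionality supplies this, both required containments collapse to tautological consequences of the phrases ``maximal evolution ideal contained in'' and ``minimal evolution ideal containing'' in Definitions \ref{21} and \ref{Q}. In particular, no use is made of any specific structural property of evolution algebras beyond the fact that evolution ideals form a subclass of ideals, so the argument is essentially formal once the basic setup is in place.
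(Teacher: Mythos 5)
Your proposal is correct and follows essentially the same route as the paper: verify the infevlattice clause by noting that every element of $\MaxEvid(X\cap Y)$ is by definition contained in $X\cap Y=\inf(\{X,Y\})$, with non-emptiness supplied by $\{0\}$ and finite-dimensionality, and treat the supremum side dually. The only difference is that you spell out the dual half explicitly (using $A$ itself, an evolution ideal, to witness non-emptiness of $\MinEvid(X\cup Y)$ and the minimality of $\id(X\cup Y)$ to get $s\subseteq Z$), whereas the paper dispatches it as ``analogous and dual''; your version makes visible exactly where the hypothesis that $A$ is an evolution algebra is used, which the paper only discusses afterwards when stating Theorem \ref{PEA}.
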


\begin{proof}
First, we see that $(\evlattid(A),\subseteq,\evinf)$ is an infevlattice over the infimum-lattice $(\lattid(A),\subseteq,\inf).$ We take an arbitrary two-element subset $\{X,Y\}\subseteq\evlattid(A)\subseteq\lattid(A).$ We remember that $\inf(\{X,Y\}):=X\cap Y.$ As $\evinf(\{X,Y\}):=\MaxEvid(X\cap Y)$ and remembering the definition of $\MaxEvid(X\cap Y),$ we know that the elements in $\evinf(\{X,Y\})$ are the maximal evolution ideals of $A$ (in the sense that there are no other evolution ideals strictly containing them and contained in $X\cap Y$ at the same time) contained in the set $X\cap Y.$ As $\{0\}$ is an evolution ideal contained in $X\cap Y$ and the algebra has finite dimension, there should exist maximal evolution ideals (in the sense that there are no other evolution ideals strictly containing them and contained in $X\cap Y$ at the same time) contained in $X\cap Y.$ Thus, choose arbitrarily one of these evolution ideals $Z\in\MaxEvid(X\cap Y).$ Then, as said before, that, particularly, means that $Z\subseteq X\cap Y=\inf(\{X,Y\}).$ Hence, as we chose $Z$ arbitrarily, we have just proved that every $Z\in\evinf(\{X,Y\})$ verifies $Z\subseteq\inf(\{X,Y\}).$ Therefore, as, at the beginning the two-element subset $\{X,Y\}\subseteq\evlattid(A)\subseteq\lattid(A)$ was arbitrarily chosen, we have just conclude that $(\evlattid(A),\subseteq,\evinf)$ is an infevlattice over the infimum-lattice $(\lattid(A),\subseteq,\inf).$

Seeing that $(\evlattid(A),\subseteq,\evsup)$ is a supevlattice over the supremum-lattice $(\lattid(A),\subseteq,\sup)$ is analogous and dual. Thence, it is proved that the quadruple $(\evlattid(A),\subseteq,\evinf,\evsup)$ is an evlattice over the lattice $(\lattid(A),\subseteq,\inf,\sup)$ when $A$ is an evolution algebra, as we wanted. This proof is concluded.
\end{proof}

In the previous theorem we could have eluded the hypothesis of $A$ being an evolution algebra and ask $A$ just to be an algebra. This would just have introduced the small problem in the proof that in some cases $\evsup(\{X,Y\})$ could be empty (in contrary to what happen when we ask $A$ to be an evolution algebra) but the rest of the proof would work equally. Thus, we have the next theorem, which is more general, just refining minimally our previous proof.

\begin{teorema}\label{PEA}
Let $A$ be an algebra. The quadruple $$(\evlattid(A),\subseteq,\evinf,\evsup),$$ where $$\evsup(\{X,Y\}):=\MinEvid(X\cup Y) \mbox{\ and\ }\evinf(\{X,Y\}):=\MaxEvid(X\cap Y),$$ is an evlattice over the lattice $(\lattid(A),\subseteq,\inf,\sup).$
\end{teorema}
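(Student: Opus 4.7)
The plan is to adapt the proof of Theorem \ref{PE} almost verbatim, but with a careful vacuous-case analysis for the supremum direction, since the hypothesis that $A$ is an evolution algebra is being dropped and $\MinEvid(X \cup Y)$ need no longer be populated.

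First I would verify that $(\evlattid(A),\subseteq,\evinf)$ is an infevlattice over $(\lattid(A),\subseteq,\inf)$ by exactly the same argument as before. Given an arbitrary two-element subset $\{X,Y\} \subseteq \evlattid(A)$, we have $\inf(\{X,Y\}) = X \cap Y$ in $\lattid(A)$. The zero ideal $\{0\}$ is an evolution ideal of any algebra, and it is trivially contained in $X \cap Y$, so the collection of evolution ideals of $A$ contained in $X \cap Y$ is non-empty; since $A$ is finite-dimensional, this collection admits maximal elements by a standard dimension-counting argument. Hence $\MaxEvid(X \cap Y) \neq \emptyset$, and for each $Z \in \evinf(\{X,Y\}) = \MaxEvid(X \cap Y)$ we have by definition $Z \subseteq X \cap Y = \inf(\{X,Y\})$. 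This is exactly the infevlattice condition.

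Next I would verify that $(\evlattid(A),\subseteq,\evsup)$ is a supevlattice over $(\lattid(A),\subseteq,\sup)$. Here is where the drop of the evolution-algebra hypothesis matters, and it is the step I expect to be the only subtle point: without $A$ being an evolution algebra itself, there may simply be no evolution ideal of $A$ containing the ideal $X + Y = \sup(\{X,Y\})$, so $\evsup(\{X,Y\}) = \MinEvid(X \cup Y)$ can be empty. In that case the universally quantified condition ``for every $x \in \evsup(\{X,Y\})$, $\sup(\{X,Y\}) \subseteq x$'' holds vacuously, so there is nothing to check. If instead $\MinEvid(X \cup Y) \neq \emptyset$, pick any $Z \in \MinEvid(X \cup Y)$; by definition $Z$ is an ideal of $A$ with $X \cup Y \subseteq Z$, and since $\sup(\{X,Y\})$ is the smallest ideal of $A$ containing $X \cup Y$, we get $\sup(\{X,Y\}) \subseteq Z$. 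This is the supevlattice condition.

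Combining both halves, the quadruple $(\evlattid(A),\subseteq,\evinf,\evsup)$ satisfies both the infevlattice and supevlattice clauses of Definition \ref{15}, hence it is an evlattice over the lattice $(\lattid(A),\subseteq,\inf,\sup)$, as claimed. The only substantive refinement with respect to the proof of Theorem \ref{PE} is the vacuous-truth handling for an empty rear of suprema; everything else is a word-for-word transcription.
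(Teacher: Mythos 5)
Your proposal matches the paper's own treatment: the paper proves Theorem \ref{PE} in detail and then obtains Theorem \ref{PEA} by exactly the refinement you describe, namely observing that the only change when $A$ is merely an algebra is that $\evsup(\{X,Y\})=\MinEvid(X\cup Y)$ may be empty, in which case the supevlattice condition holds vacuously while the infevlattice half goes through verbatim (using that $\{0\}$ is an evolution ideal and $A$ is finite-dimensional). So the argument is correct and essentially identical in structure to the paper's intended proof.
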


Now that we have set that our first examples of evlattices are deeply connected with the lattices of ideals and the posets of evolution ideals of algebras, it is natural to ask ourselves how can we recover some of the power that lattice theory has in the study of algebras for these new structures. For this reason, we see now how we can construct lattices from evlattices. These lattices will be subposets (sublattices) of the basis lattices in the construction of the respective evlattices. First we establish the way to construct a lattice out of an evlattice.

\begin{definicion}\label{27}
Let $(L,\leq_{L},\inf,\sup)$ be a lattice and consider its subbrset $(E,\leq_{E}),$ with $E\subseteq L$ and $\leq_{E}$ the restriction of $\leq_{L}$ to $E.$

\begin{enumerate}
    \item Let $(E,\leq_{E},f)$ be an infevlattice over the infimum-lattice $(L,\leq_{L},\inf)$ and suppose that there exist a subset $F\subseteq E$ and a map $f'\colon\binom{F}{2}\to F$ verifying that $f'(\{a,b\})\in f(\{a,b\}),$ for every $a,b\in F$ with $a\neq b,$ such that $(F,\leq_{F},f')$ is an infimum-lattice with the infimum given by $f'.$ Then we say that $(F,\leq_{F},f')$ is an infimum-lattice \textbf{definable within (the infimum-lattice frame determined by the infevlattice)} $(E,\leq_{E},f).$
    \item Let $(E,\leq_{E},g)$ be a supevlattice over the supremum-lattice $(L,\leq_{L},\sup)$ and suppose that there exist a subset $G\subseteq E$ and a map $g'\colon\binom{G}{2}\to G$ verifying that $g'(\{a,b\})\in g(\{a,b\}),$ for every $a,b\in G$ with $a\neq b,$ such that $(G,\leq_{G},g')$ is a supremum-lattice with the supremum given by $g'.$ Then we say that $(G,\leq_{G},g')$ is a supremum-lattice \textbf{definable within (the supremum-lattice frame determined by the supevlattice)} $(E,\leq_{E},g).$
    \item Let $(E,\leq_{E},f,g)$ be an evlattice over the lattice $(L,\leq_{L},\inf,\sup)$ and suppose that there exist a subset $H\subseteq E$ and maps $f',g'\colon\binom{H}{2}\to H$ verifying that $f'(\{a,b\})\in f(\{a,b\})$ and $g'(\{a,b\})\in g(\{a,b\})$ for every $a,b\in H$ with $a\neq b,$ such that $(H,\leq_{H},f',g')$ is a lattice with the infimum given by $f'$ and the supremum given by $g'.$ Then we say that $(H,\leq_{H},f',g')$ is a lattice \textbf{definable within (the lattice frame determined by the evlattice)} $(E,\leq_{E},f,g).$
    \end{enumerate}
\end{definicion}

Now, we translate this to our particular case.

\begin{definicion}\label{28}
Let $A$ be an algebra over a field $\mathbb{K}$ and $S$ a subset of $A$. Define
$$
\evinf(\{X,Y\}):=\MaxEvid (X\cap Y)\mbox{\ and\ } \evsup(\{X,Y\}):=\MinEvid(X\cup Y).
$$
    
    \begin{enumerate}
        \item We say that the set of lattices definable within the evlattice $$(\Evido(S),\subseteq, \evinf, \evsup)$$ is the \textbf{set of lattices definable over $S$} and we denote it by $\OverEvid(S).$
        \item We say that the set of lattices definable within the evlattice $$(\Evidb(S),\subseteq, \evinf, \evsup).$$ is the \textbf{set of lattices definable below $S$} and we denote it by $\BelowEvid(S).$
    \end{enumerate}
\end{definicion}

When we speak about the extrema we are speaking about the infima and the suprema indistinctly. Now we are going to order lattices.

\begin{definicion}\label{29}
Let $(B,\leq)$ be a brset and $(L,\leq_{L},\inf_{L},\sup_{L})$ and $(M,\leq_{M},\inf_{M},\sup_{M})$ be lattices with $L,M\subseteq B$ and $\leq_{L}, \leq_{M}$ the restrictions of $\leq$ to $L$ and $M,$ respectively. We say that $L\leq_{\latt}M$ if $L\subseteq M$ and, for every subset of $L$ of two different elements $\{a,b\}\subseteq L,$ we have that $\inf_{L}(\{a,b\})=\inf_{M}(\{a,b\})$ and $\sup_{L}(\{a,b\})=\sup_{M}(\{a,b\});$ that is, if $L$ is a subset of $M$ and the extrema are respected in the extension from $L$ to $M.$
\end{definicion}

It is easy to see that $\leq_{\latt}$ is an order relation between sublattices of a brset. We call this order the \textbf{order induced by the inclusion between the lattices respecting the extrema}. Now we introduce a technical result which follows easily from the Zorn's Lemma.

\begin{teorema}\label{30}
Let $A$ be an algebra over a field $\mathbb{K}$ and $S$ a subset of $A$. Every lattice definable over (below) $S$ is contained in a maximal lattice definable over (below) $S$ in the sense given by the order induced by the inclusion between the lattices respecting the extrema $\leq_{\latt}.$
\end{teorema}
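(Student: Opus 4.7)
The plan is a direct application of Zorn's lemma to the collection of lattices definable over (respectively below) $S$ that contain a given starting lattice, ordered by $\leq_{\latt}$. Fix an arbitrary lattice $(L_{0},\leq_{L_{0}},f'_{0},g'_{0})$ definable over $S$ (the case of lattices definable below $S$ is completely analogous and I will handle only the first version, since by the symmetry between Definition \ref{27}(1) and \ref{27}(2) the other proof proceeds verbatim). Let $\mathcal{C}$ be the collection of all lattices $(L,\leq_{L},f',g')$ definable over $S$ such that $L_{0}\leq_{\latt}L$. Since $L_{0}\in\mathcal{C}$, this collection is non-empty. I first check that $\leq_{\latt}$ restricted to $\mathcal{C}$ is a partial order, which is immediate from the fact that inclusion is a partial order and equality of the two evolved operations is compatible with chaining inclusions.

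Next, I take a chain $\{(L_{i},\leq_{L_{i}},f'_{i},g'_{i})\}_{i\in I}$ in $(\mathcal{C},\leq_{\latt})$ and build a candidate upper bound $L:=\bigcup_{i\in I}L_{i}$ with the order $\leq_{L}$ inherited from the basis lattice $(\lattid(A),\subseteq)$. The crucial point is to define the operations $f',g'\colon\binom{L}{2}\to L$. Given $\{a,b\}\subseteq L$, since $\{L_{i}\}$ is a chain under $\leq_{\latt}$ (which in particular entails set-theoretic inclusion), there is some index $j$ with $\{a,b\}\subseteq L_{j}$; I put $f'(\{a,b\}):=f'_{j}(\{a,b\})$ and $g'(\{a,b\}):=g'_{j}(\{a,b\})$. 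The definition is independent of $j$: if also $\{a,b\}\subseteq L_{k}$ with, say, $L_{j}\leq_{\latt}L_{k}$, then by Definition \ref{29} the extrema of any two-element subset of $L_{j}$ computed in $L_{j}$ coincide with those computed in $L_{k}$, so $f'_{j}(\{a,b\})=f'_{k}(\{a,b\})$ and similarly for $g'$. This gives well-defined maps $f',g'$ on $\binom{L}{2}$ with values in $L$.

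The second step is to verify that $(L,\leq_{L},f',g')$ is itself a lattice definable within the evlattice $(\Evido(S),\subseteq,\evinf,\evsup)$. That $L\subseteq\Evido(S)$ is clear because each $L_{i}\subseteq\Evido(S)$. The lattice axioms for $(L,\leq_{L},f',g')$ reduce, for each pair $\{a,b\}$, to the lattice axioms inside some $L_{j}$ containing both elements, so they hold. The condition that $f'(\{a,b\})\in\evinf(\{a,b\})$ and $g'(\{a,b\})\in\evsup(\{a,b\})$ is inherited from the same condition already satisfied in $L_{j}$. Hence $L\in\mathcal{C}$. Finally I must check that $L_{i}\leq_{\latt}L$ for every $i\in I$: the inclusion $L_{i}\subseteq L$ is obvious, and the equality of extrema on two-element subsets of $L_{i}$ when computed in $L_{i}$ versus in $L$ is precisely what the definition of $f',g'$ above was engineered to ensure.

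Applying Zorn's lemma to $(\mathcal{C},\leq_{\latt})$ yields a maximal element of $\mathcal{C}$, which by construction contains the starting lattice $L_{0}$. Since $L_{0}$ was arbitrary, this is the desired conclusion. The main obstacle I anticipate is precisely the coherence check underlying the definition of $f'$ and $g'$ on the union: it is here that the definition of $\leq_{\latt}$ (inclusion plus preservation of both extrema, not just of the order) is doing the essential work. Without preservation of extrema by the chain one could not argue that the two candidate infima (respectively suprema) picked from different $L_{j}$ and $L_{k}$ coincide, and the union would fail to carry a coherent lattice structure inside the evlattice.
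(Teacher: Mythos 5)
Your proof is correct and follows essentially the same route as the paper's: a Zorn's lemma argument in which the union of a chain of lattices definable over (below) $S$, with the operations glued coherently thanks to the preservation of extrema built into $\leq_{\latt}$, serves as an upper bound for the chain. Your version is in fact slightly more complete, since by restricting to the sub-collection of lattices lying above a fixed $L_{0}$ you literally obtain that every lattice definable over (below) $S$ is contained in a maximal one, and you also spell out the well-definedness of the glued maps $f'$ and $g'$, two points the paper's own proof leaves implicit.
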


\begin{proof}
We use the Zorn's Lemma. In fact, take a chain $\mathfrak{l}$ of lattices definable over (below) $S$ in the sense of $\leq_{\latt}.$ Then $\bigcup_{\mathcal{L}\in\mathfrak{l}}\mathcal{L}$ with the order induced by the inclusion between the lattices respecting the extrema is a lattice definable over (below) $S$ and, therefore, it is an upper bound for the chain of lattices definable over (below) $S$ given by $\mathfrak{l}$ in the partial order given by the inclusion respecting the extrema in the set of lattices definable over (below) $S.$ Now, the Zorn's Lemma say that there should exist maximal lattices definable over (below) $S$ in the sense given by the order induced by the inclusion between the lattices respecting the extrema, as we wanted to prove.
\end{proof}

The subset $S$ is important to characterize the algebra $A.$

\begin{definicion}\label{31}
Let $A$ be an algebra over a field $\mathbb{K}$ and $S$ a subset of $A$. We say that $S$ is a \textbf{cut of over unicity for} $A$ if there exist only one maximal lattice definable over $S.$ Similarly, we say that $S$ is a \textbf{cut of below unicity for} $A$ if there exist only one maximal lattice definable below $S.$ 
\end{definicion}

We require some notation to represent \textit{tightness} in brsets.

\begin{definicion}
Let $(L,\leq_{L})$ be a brset. We say that a chain of inequalities $a\leq_{L}b_{1},b_{2}\dots,b_{n-1},b_{n}\leq_{L}c$ \textbf{admits a tightening} if there exist other elements $l,m\in L$ with $l\neq a$ or $m\neq c$ verifying $a\leq_{L}l\leq_{L}b_{1},b_{2}\dots,b_{n-1},b_{n}\leq_{L}m\leq_{L}c.$ A chain $a\leq_{L}b_{1},b_{2}\dots,b_{n-1},b_{n}\leq_{L}c$ not admitting a tightening is called \textbf{tight} and we represent it as $a\leq^{\between}_{L}b_{1},b_{2}\dots,b_{n-1},b_{n}\leq^{\between}_{L}c.$
\end{definicion}

Now we have the tools that we need to prove the next theorem valid for evolution algebras.

\begin{teorema}\label{33}
Let $A$ be an evolution algebra over a field $\mathbb{K}$ and $S$ a subset of $A.$ Then we have the following.
\begin{enumerate}
    \item If $S$ is a cut of over unicity for $A,$ then every pair of two different evolution ideals $I,J$ of $A$ containing $S$ such that there exist evolution ideals $R$ with $S\subseteq R\in\MaxEvid(I\cap J)$ verifies that there exists just one evolution ideal $T$ of $A$ such that $T\in\MinEvid(I\cup J).$ Moreover, under this circumstance, the evolution ideal $R$ is unique; that is, if there exists an evolution ideal $R$ such that $S\subseteq R\in\MaxEvid(I\cap J),$ then this is the only one.
     \item If $S$ is a cut of below unicity for $A,$ then every pair of two different evolution ideals $I,J$ of $A$ contained in $S$ such that there exist evolution ideals $R$ with $S\supseteq R\in\MinEvid(I\cup J)$ verifies that there exists just one evolution ideal $T$ of $A$ such that $T\in\MaxEvid(I\cap J).$ Moreover, under this circumstance, the evolution ideal $R$ is unique; that is, if there exists an evolution ideal $R$ such that $S\supseteq R\in\MinEvid(I\cup J),$ then this is the only one.
\end{enumerate}
\end{teorema}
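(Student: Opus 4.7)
The plan is to prove Part 1 (over unicity) in detail; Part 2 follows by a completely dual argument exchanging $\cap$ with $\cup$, $\MaxEvid$ with $\MinEvid$, and $\evinf$ with $\evsup$, and building the analogous diamond $\{T, I, J, R\}$ inside $\Evidb(S)$. The strategy is proof by contradiction: if any of the uniqueness claims fails, I will exhibit two distinct lattices definable over $S$ and extend each via Theorem \ref{30} to a maximal lattice over $S$, producing two different maximal lattices and contradicting the hypothesis that $S$ is a cut of over unicity.

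For the uniqueness of $T$, I would assume toward contradiction that $T_{1}, T_{2} \in \MinEvid(I \cup J)$ are distinct. Without loss of generality $I$ and $J$ are incomparable (if they are comparable the claim is immediate). Form, for $k = 1, 2$, the four-element set $H_{k} := \{R, I, J, T_{k}\}$ ordered by inclusion: a diamond with $R$ at the bottom, $I$ and $J$ incomparable in the middle, and $T_{k}$ at the top. I would then verify that each $H_{k}$ is a lattice definable within $(\Evido(S), \subseteq, \evinf, \evsup)$ in the sense of Definition \ref{28}. Every element contains $S$ because $S \subseteq R \subseteq I, J \subseteq T_{k}$; the infimum of $\{I, J\}$ inside $H_{k}$ is $R$, which lies in $\MaxEvid(I \cap J)$ by hypothesis; the supremum of $\{I, J\}$ inside $H_{k}$ is $T_{k}$, which lies in $\MinEvid(I \cup J)$; and every remaining pair is comparable, so the inf and sup are the smaller and larger elements, trivially in the appropriate $\MaxEvid$ and $\MinEvid$ sets. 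Theorem \ref{30} then extends each $H_{k}$ to a maximal lattice $\mathcal{M}_{k}$ over $S$. The crucial point is that no lattice definable over $S$ can contain both $T_{1}$ and $T_{2}$: the supremum of $\{I, J\}$ in such a lattice would have to belong to $\MinEvid(I \cup J)$ and dominate both $T_{1}$ and $T_{2}$, but distinct elements of $\MinEvid(I \cup J)$ are pairwise incomparable by minimality. Hence $\mathcal{M}_{1} \neq \mathcal{M}_{2}$, contradicting over unicity.

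For the uniqueness of $R$, I would first observe that $\MinEvid(I \cup J)$ is nonempty because $A$ itself is an evolution ideal containing $I \cup J$ and, by finite-dimensionality, every descending chain of evolution ideals stabilizes. Fix any $T \in \MinEvid(I \cup J)$. If $R_{1} \neq R_{2}$ were two elements of $\MaxEvid(I \cap J)$ both containing $S$, I would form the diamonds $\{R_{k}, I, J, T\}$ and run the same extension argument, invoking the dual observation that no lattice definable over $S$ can contain two distinct elements of $\MaxEvid(I \cap J)$ (otherwise the infimum of $\{I, J\}$ in that lattice would lie in $\MaxEvid(I \cap J)$ and contain both, contradicting incomparability).

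The main technical obstacle I expect is the careful verification that each diamond $H_{k}$ really qualifies as a lattice definable within the evlattice in the precise sense of Definition \ref{27}: checking the membership conditions $f'(\{a, b\}) \in \MaxEvid(a \cap b)$ and $g'(\{a, b\}) \in \MinEvid(a \cup b)$ for every pair, which reduces to the elementary fact that any evolution ideal $X$ satisfies $X \in \MaxEvid(X) \cap \MinEvid(X)$. Once that verification is in place, both uniqueness statements reduce to the incomparability of distinct elements of $\MaxEvid(I \cap J)$ and of $\MinEvid(I \cup J)$, which is immediate from their definition as maximal, respectively minimal, evolution ideals.
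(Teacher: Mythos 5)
Your proposal is correct and takes essentially the same route as the paper's own proof: the paper likewise builds, from a hypothesized second extremum $L\in\MinEvid(I\cup J),$ the competing tight diamond $S\subseteq R\leq^{\between}_{\evid}I,J\leq^{\between}_{\evid}L$ alongside $S\subseteq R\leq^{\between}_{\evid}I,J\leq^{\between}_{\evid}T,$ extends to maximal lattices definable over $S$ (Theorem \ref{30}), and contradicts the cut-of-unicity hypothesis, handling the uniqueness of $R$ and the second assertion dually, just as you do. The only slip is directional: in a lattice containing $I,J,T_{1},T_{2}$ the supremum of $\{I,J\}$ is dominated by $T_{1}$ and $T_{2}$ rather than dominating them, but since distinct elements of $\MinEvid(I\cup J)$ are incomparable, your conclusion $T_{1}=T_{2}$ follows all the same.
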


\begin{proof}

$1.$ Let suppose that there exist another evolution ideal $L$ of $A$ such that $L\in\MinEvid(I\cup J).$ Then, choosing an evolution ideal $S\subseteq R\in\MaxEvid(I\cap J),$ we could define the lattice definable over $S$ given by $S\subseteq R\leq^{\between}_{\evid}I,J\leq^{\between}_{\evid}L,$ which would avoid the maximal extension of the lattice definable over $S$ given by $S\subseteq R\leq^{\between}_{\evid}I,J\leq^{\between}_{\evid}T$ to be the only one maximal lattice definable over $S.$ Now, the unicity of $R$ follows similarly from the unicity of the maximal lattice.

$2.$ Let suppose that there exist another evolution ideal $L$ of $A$ such that $L\in\MaxEvid(I\cap J).$ Then, choosing an evolution ideal $S\supseteq R\in\MinEvid({I\cup J}),$ we could define the lattice definable below $S$ given by $L\leq^{\between}_{\evid}I,J\leq^{\between}_{\evid}R\subseteq S,$ which would avoid the maximal extension of the lattice definable below $S$ given by $T\leq^{\between}_{\evid}I,J\leq^{\between}_{\evid}R\subseteq S$ to be the only one maximal lattice definable below $S.$ Again, the unicity of $R$ follows similarly from the unicity of the maximal lattice.
\end{proof}

\begin{remark}
We note that in the proof of this last result we are using the fact of $A$ being an evolution algebra just in the first assertion to ensure the existence of $T$ in all situations. This is similar to what we commented that was happening in the discussion exposed before introducing the Theorem \ref{PEA} after proving its \textit{evolution} pair Theorem \ref{PE} introduced before and having both similar proofs. Coming back to this case, again, all the strength of $A$ being an evolution algebra is not needed and it would be enough to ask the algebra $A$ to have exactly one maximal evolution ideal $M,$ which is what happen to evolution algebras: being themselves their maximal evolution ideal. That is, it would be enough to ask $A$ to have a poset of evolution ideals being bounded for above by just one evolution ideal $M.$ Nevertheless, then we could decide to forget about $A$ and concentrate just in this maximal evolution ideal $M$ of $A$ taking it as an algebra as our algebra under research. Hence, as $M$ would be an evolution algebra, then we would recover the same assertion but for this ideal $M$ instead of $A,$ i.e., writing $M$ where we wrote $A$ in the previous result, if we assume that $S\subseteq M.$ Otherwise, if $S\nsubseteq M,$ we would have that $\Evido(S)=\emptyset$ and, therefore, the only lattice definable over $S$ would be the empty lattice (we accept the usual convention of considering it as a lattice). This discussion does not affect the second assertion of the theorem (at least as long as we consider the ideal $\{0\}$ to be an evolution ideal with natural basis $B=\emptyset$).
\end{remark}

Also, as our algebras are finite-dimensional, we can restate the previous theorem before closing this section.

\begin{teorema}\label{35}
Let $A$ be an evolution algebra over a field $\mathbb{K}$ and $S$ a subset of $A.$ Then we have the following.
\begin{enumerate}
    \item If $S$ is a cut of over unicity for $A,$ then every pair of two different evolution ideals $I,J$ of $A$ containing $S$ such that there exist evolution ideals $R$ with $S\subseteq R\subseteq I\cap J$ verifies that there exists just one evolution ideal $T$ of $A$ such that $T\in\MinEvid(I\cup J).$ Moreover, there exist a maximal $R$ and it is unique.
     \item If $S$ is a cut of below unicity for $A,$ then every pair of two different evolution ideals $I,J$ of $A$ contained in $S$ such that there exist evolution ideals $R$ with $S\supseteq R\supseteq I\cup J$ verifies that there exists just one evolution ideal $T$ of $A$ such that $T\in\MaxEvid(I\cap J).$ Moreover, there exist a maximal $R$ and it is unique.
\end{enumerate}
\end{teorema}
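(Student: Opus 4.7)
The plan is to reduce Theorem~\ref{35} to Theorem~\ref{33} by exploiting the finite-dimensionality hypothesis to bridge the gap between the weaker existence assumption on $R$ (an arbitrary evolution ideal sandwiched between $S$ and $I\cap J$, or between $I\cup J$ and $S$) and the stronger hypothesis of Theorem~\ref{33} (that $R$ is actually maximal, respectively minimal, in the sandwich). Once we show these two hypotheses coincide under finite dimensionality, the conclusions of Theorem~\ref{33} transfer verbatim, and the ``moreover'' clause about existence of a maximal (minimal) $R$ is settled simultaneously.

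For part~$1,$ I would start from an arbitrary evolution ideal $R_{0}$ with $S\subseteq R_{0}\subseteq I\cap J$ and consider the family $\mathcal{F}=\{R'\vartriangleleft_{\evid}A:R_{0}\subseteq R'\subseteq I\cap J\}.$ This family is nonempty (it contains $R_{0}$) and every element has $\mathbb{K}$-dimension bounded by $\dim_{\mathbb{K}}(I\cap J)\leq\dim_{\mathbb{K}}A<\infty,$ so $\mathcal{F}$ admits a dimension-maximal element $R.$ The small but essential check is that such an $R$ actually sits in $\MaxEvid(I\cap J)$: any evolution ideal $R''$ with $R\subsetneq R''\subseteq I\cap J$ would automatically satisfy $R_{0}\subseteq R\subsetneq R''\subseteq I\cap J,$ placing $R''\in\mathcal{F}$ and contradicting the maximality of $R$ in $\mathcal{F}.$ Thus $R\in\MaxEvid(I\cap J)$ with $S\subseteq R,$ which is precisely the hypothesis of Theorem~\ref{33}(1); invoking it yields the unique $T\in\MinEvid(I\cup J)$ together with the unicity of the maximal $R,$ which is the content of the ``moreover'' clause.

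Part~$2$ is handled by the exact dual argument: given an arbitrary evolution ideal $R_{0}$ with $I\cup J\subseteq R_{0}\subseteq S,$ finite-dimensionality applied to the family $\mathcal{F}'=\{R'\vartriangleleft_{\evid}A:I\cup J\subseteq R'\subseteq R_{0}\}$ produces a minimum-dimensional element $R\in\MinEvid(I\cup J)$ with $R\subseteq S,$ after which Theorem~\ref{33}(2) supplies the unique $T\in\MaxEvid(I\cap J)$ and the unicity of $R.$ The only step I expect to require any real thought is the maximality/minimality transfer from $\mathcal{F},\mathcal{F}'$ to $\MaxEvid(I\cap J),\MinEvid(I\cup J),$ but the transitivity-style argument sketched above dispatches it; no Zorn's Lemma is needed thanks to finite dimensionality, and everything else is a clean application of the already-established Theorem~\ref{33}.
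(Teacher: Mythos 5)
Your proposal is correct and is essentially the paper's own route: the paper presents Theorem~\ref{35} merely as the finite-dimensional restatement of Theorem~\ref{33}, and your dimension-maximality (resp.\ dimension-minimality) argument is exactly the omitted bridge showing that any evolution ideal sandwiched as $S\subseteq R_{0}\subseteq I\cap J$ (resp.\ $I\cup J\subseteq R_{0}\subseteq S$) yields an element of $\MaxEvid(I\cap J)$ containing $S$ (resp.\ of $\MinEvid(I\cup J)$ contained in $S$), after which Theorem~\ref{33} delivers both the unicity of $T$ and the ``moreover'' clause. Your reading of the second ``moreover'' clause as asserting existence and unicity of a minimal $R$ (dual to part 1) is the natural interpretation of the statement as written.
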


\section{Minimal ideals}

From now on, when we speak about a minimal ideal $I$ of an algebra $A$ we are referring to those ideals which are non-zero and verify that, for every ideal $J\neq\{0\}$ of the algebra $A,$ the inequality $J\subseteq I$ implies $J=I.$ However, a minimal evolution ideal of an algebra $A$ is a non-zero evolution ideal $I$ of the algebra $A$ verifying that, for every \textit{evolution ideal} $J\neq\{0\}$ of the algebra $A$ the inequality $J\subseteq I$ implies $J=I.$ Thus, we have to take special care because saying that $I$ is a minimal evolution ideal of the algebra $A$ is not equivalent to saying that $I$ is a minimal ideal of the algebra $A$ and an evolution ideal of the algebra $A$: the second fact implies the first always, but the reciprocal is not always true.

\begin{definicion}
Let $B:=\{e_{i}\}_{i\in\Lambda}$ be a basis of a vector space $A.$ We define the \textbf{projections} over the basic elements in $B$ such that, for every element $a\in A,$ we set $\proj_{B,e_{j}}(a)$ as the coefficient of $e_{j}$ in the unique representation of $a$ as a linear combination of the elements in the basis $B.$
\end{definicion}

\begin{definicion}\label{37}
Let $A$ be an algebra. We say that $A$ is \textbf{simple} if $A^{2}\neq \{0\}$ and $\{0\}$ is the only proper ideal of $A.$ We say that $A$ is \textbf{semiprime} if there are no non-zero ideals $I$ of $A$ such that $I^{2}=\{0\}.$ We say that $A$ is \textbf{nondegenerate by the left} if $a(Aa)=\{0\}$ implies $a=0.$ We say that $A$ is \textbf{non-degenerate} if $A$ has a natural basis $B=\{e_{i}\mid i\in \Lambda\}$ such that $e_{i}^{2}\neq0$ for every $i\in\Lambda.$
\end{definicion}

We denote $A^{\overline{2}}:=\{ab\mid a,b\in A\},$ so this set is just the set formed by all the products of elements of $A.$ Carefully, we note and remember here that $\linspan(A^{\overline{2}})$ is usually denoted by $A^{2}.$ Now, we prove that, in every simple algebra $A,$ we have that $\linspan(A^{\overline{2}})=A.$

\begin{proposicion}
Let $A$ be a simple algebra. Then $\linspan(A^{\overline{2}})=A.$
\end{proposicion}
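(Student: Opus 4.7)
The plan is straightforward: I would show that $\linspan(A^{\overline{2}})$ is a non-zero ideal of $A$, and then invoke simplicity to conclude it must equal $A$.

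First, I would verify that $A^{2}:=\linspan(A^{\overline{2}})$ is an ideal of $A$. Let $a\in A$ and take an arbitrary $c\in A^{2},$ so $c=\sum_{i=1}^{n}\lambda_{i}\, x_{i}y_{i}$ for some scalars $\lambda_{i}\in\mathbb{K}$ and elements $x_{i},y_{i}\in A.$ Then by bilinearity of the multiplication,
$$
ac=\sum_{i=1}^{n}\lambda_{i}\, a(x_{i}y_{i})\quad\text{and}\quad ca=\sum_{i=1}^{n}\lambda_{i}\, (x_{i}y_{i})a.
$$
Each summand $a(x_{i}y_{i})$ and $(x_{i}y_{i})a$ is a product of two elements of $A,$ hence lies in $A^{\overline{2}}\subseteq A^{2}.$ Therefore $ac,ca\in A^{2},$ and since $A^{2}$ is clearly a linear subspace of $A$ by construction, it is indeed a (two-sided) ideal.

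Second, I would exploit the definition of simplicity. By assumption $A^{2}\neq\{0\}$ (this is part of the hypothesis ``$A$ is simple''), and also $\{0\}$ is the only proper ideal of $A,$ so the only ideals of $A$ are $\{0\}$ and $A$ itself. Combining these two facts with the ideal property established in the previous step, $A^{2}$ must coincide with $A,$ which is exactly the desired conclusion $\linspan(A^{\overline{2}})=A.$

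There is no real obstacle here: the only subtle point is checking that $A^{2}$ is an ideal without invoking associativity, but this follows directly from bilinearity because each generator $xy$ of $A^{2}$ is already a single product, so multiplying it (on either side) by any element of $A$ lands again in $A^{\overline{2}}.$ The argument therefore goes through for arbitrary (not necessarily associative, not necessarily commutative) algebras, which fits the generality in which the statement is formulated.
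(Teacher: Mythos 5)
Your proof is correct, and it takes a slightly different route from the paper's. You show directly that $\linspan(A^{\overline{2}})$ is an ideal of $A$ (absorption follows from bilinearity, since $a(xy)$ and $(xy)a$ are again single products, hence lie in $A^{\overline{2}}$), observe it is non-zero because $A^{2}\neq\{0\}$ is part of the definition of simplicity used here, and conclude by simplicity that it equals $A.$ The paper instead fixes one non-zero product $ab,$ uses simplicity to get $\id(\{ab\})=A,$ and then argues that $\id(\{ab\})\subseteq\linspan(A^{\overline{2}})$ because the elements of that ideal are (spans of) further products such as $l(ab)$ and $(ab)r.$ Your version has a small advantage in the non-associative setting: the ideal generated by a single element generally also contains iterated products like $l'(l(ab)),$ a point the paper's phrasing glosses over (harmlessly, since those are still elements of $A^{\overline{2}}$), whereas your argument never needs any description of $\id(\{ab\})$ at all. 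The paper's version, on the other hand, makes explicit that a single non-zero product already generates everything, which is a slightly stronger-looking intermediate fact. Both proofs rest on the same two ingredients: the convention that simplicity includes $A^{2}\neq\{0\},$ and the fact that the only ideals of a simple algebra are $\{0\}$ and $A.$
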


\begin{proof}
Obviously, $\linspan(A^{\overline{2}})\subseteq A.$ Now we prove in the other direction that $A\subseteq\linspan(A^{\overline{2}}).$ Let $a,b\in A$ such that $ab\neq 0$ and let $c\in A$ be an arbitrary element. Then $c\in A=\id(\{ab\}).$ Now, $l(ab)$ and $(ab)r$ are in $A^{\overline{2}}$ for arbitrary $l,r\in A$ so 
$c\in A=\id(\{ab\})\subseteq\linspan(A^{\overline{2}}).$ This concludes our proof.
\end{proof}

Through this text, when we say \textit{non-associative}, we mean non-necessarily-associative, as usual. When $A$ is associative, semiprimeness and nondegeneracy are equivalent.

\begin{proposicion}\cite[Section 9.2]{rtana}
Let $A$ be an associative algebra. Then $A$ is semiprime if and only if $A$ is nondegenerate.
\end{proposicion}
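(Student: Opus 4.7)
The plan is to establish the two implications separately. For the direction nondegenerate $\Rightarrow$ semiprime, I would argue by contradiction: if $A$ were not semiprime, some non-zero ideal $I$ would satisfy $I^{2}=\{0\}.$ Picking $0\neq a\in I$ and using that $I$ is a left ideal, $Aa\subseteq I,$ so $a(Aa)\subseteq I\cdot I=\{0\}.$ Nondegeneracy (by the left) then forces $a=0,$ contradicting the choice. Associativity enters only implicitly, through the usual convention that $I^{2}$ is spanned by products $I\cdot I.$

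For the converse, assuming $A$ is semiprime and $a(Aa)=\{0\}$ (which by associativity equals $aAa=\{0\}$), I would deduce $a=0$ through a descending chain of ideals, each shown to vanish by semiprimeness. First, $AaA$ is a two-sided ideal satisfying $(AaA)(AaA)\subseteq A(aAa)A=\{0\},$ so $AaA=\{0\}.$ Next, $Aa$ is a left ideal, and $Aa\cdot A=AaA=\{0\}\subseteq Aa$ promotes it to a two-sided ideal; its square $(Aa)(Aa)\subseteq A(aAa)=\{0\}$ yields $Aa=\{0\}.$ The symmetric argument (or the mirror computation) gives $aA=\{0\}.$

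With $Aa$ and $aA$ both zero, the one-dimensional subspace $\mathbb{K}a$ becomes a two-sided ideal, since $A\cdot\mathbb{K}a=\mathbb{K}(Aa)=\{0\}$ and dually on the right. Moreover $a^{2}=a\cdot a\in aA=\{0\},$ whence $(\mathbb{K}a)^{2}=\mathbb{K}a^{2}=\{0\}.$ A final application of semiprimeness forces $\mathbb{K}a=\{0\},$ i.e., $a=0.$ The main obstacle I anticipate is precisely the non-unital setting: if $A$ carried a unit, the conclusion would be immediate from $AaA=\{0\}$ because $a=1\cdot a\cdot 1\in AaA.$ Without this luxury, one has to cascade through $AaA\to Aa\to aA\to\mathbb{K}a,$ being careful at each step that the subspace in question has just acquired two-sided-ideal status precisely thanks to the vanishing of the previous one; miss any step and the chain breaks.
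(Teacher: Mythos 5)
Your proof is correct, and it is worth noting that the paper itself offers no argument for this proposition: it is quoted from Section 9.2 of the cited reference, so there is no in-paper proof to compare against. The first direction is exactly as you say, and the converse is the standard non-unital cascade: kill the square-zero two-sided ideal $AaA,$ then $Aa$ and $aA$ (which become two-sided precisely because $AaA=\{0\}$), and finally $\mathbb{K}a$ using $a^{2}\in aA=\{0\},$ each step by semiprimeness; your insistence on verifying two-sided-ideal status before invoking semiprimeness (the paper's definition of semiprime quantifies over ideals, not one-sided ideals) and your handling of the absence of a unit are exactly where a sloppier argument would break. One small correction: your parenthetical that associativity enters the easy direction ``through the convention that $I^{2}$ is spanned by products'' is off --- that convention is just the definition of $I^{2},$ and the implication nondegenerate $\Rightarrow$ semiprime is pure definition-chasing valid in any algebra, as reflected by the implication $1\Rightarrow2$ of Proposition \ref{41} for (non-associative) evolution algebras. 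Associativity is genuinely needed only for semiprime $\Rightarrow$ nondegenerate, which is precisely the implication that fails in the evolution-algebra setting, as the paper recalls right after Proposition \ref{41}.
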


But this last proposition is not true in the general nonassociative case, as showed (specifically for evolution algebras) in \cite[Remark 2.31]{cvs}. Also we have the following result.

\begin{proposicion}\label{A}
Let $A$ be a nondegenerate evolution algebra over a field $\mathbb{K}$ with a natural basis $B=\{e_{i}\}_{i\in\Lambda}$ and the multiplication given in this basis by $e_{i}e_{i}=\sum_{j=1}^{n_{i}}a_{ij}e_{j}$ for every $i\in\Lambda$ and for some naturals $n_{i}\in\mathbb{N}.$ Then $a_{ii}\neq 0$ for all $i\in\Lambda.$
\end{proposicion}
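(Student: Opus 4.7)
The plan is to argue by contradiction, exploiting the fact that in an evolution algebra the one-sided product $Ae_{i}$ collapses to the line $\mathbb{K}e_{i}^{2}$. First I would suppose that $a_{ii}=0$ for some $i\in\Lambda$ and compute $e_{i}\cdot e_{i}^{2}=\sum_{j=1}^{n_{i}}a_{ij}(e_{i}e_{j})$; the natural-basis property $e_{i}e_{j}=0$ for $j\ne i$ kills every summand with $j\ne i$, leaving $a_{ii}\,e_{i}^{2}=0$. Next I would observe that for an arbitrary $a=\sum_{k}\alpha_{k}e_{k}\in A$, the same orthogonality gives $ae_{i}=\alpha_{i}e_{i}^{2}$, whence $Ae_{i}=\mathbb{K}e_{i}^{2}$, and therefore $e_{i}(Ae_{i})=\mathbb{K}\,(e_{i}\cdot e_{i}^{2})=\{0\}$.

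The final step is to invoke the nondegeneracy hypothesis to conclude $e_{i}=0$, the desired contradiction since $e_{i}$ is a basis vector. Concretely, I would appeal to the ``nondegeneracy by the left'' clause of Definition~\ref{37}, which says that $a(Aa)=\{0\}$ forces $a=0$; this is precisely the hypothesis that fits the above computation verbatim. Hence $a_{ii}\ne 0$ for every $i\in\Lambda$.

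The point requiring the most care is the exact form of the nondegeneracy hypothesis. The literal condition in Definition~\ref{37}---that $e_{i}^{2}\ne 0$ for all $i$ in some natural basis---is not by itself sufficient for the conclusion: the two-dimensional evolution algebra with $e_{1}^{2}=e_{2}$ and $e_{2}^{2}=e_{1}$ is simple, semiprime, and has every basic square non-zero in every natural basis, yet $a_{ii}=0$ for all $i$. I would therefore either read ``nondegenerate'' in the statement as the ``by the left'' version (which sits alongside the weaker version in the same Definition~\ref{37}) or invoke \cite[Remark~2.31]{cvs} to bridge the gap. Once the hypothesis is pinned down, the computational core is the one-liner $e_{i}(Ae_{i})=\mathbb{K}\,a_{ii}\,e_{i}^{2}$, and the proof closes immediately.
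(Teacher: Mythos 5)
Your proof is correct and follows essentially the same route as the paper: both arguments show that $a_{ii}=0$ forces $e_{i}(Ae_{i})=\{0\}$ (the paper via $\proj_{B,e_{i}}(ae_{i})=0$ for all $a\in A$, you via $Ae_{i}=\mathbb{K}e_{i}^{2}$ and $e_{i}\cdot e_{i}^{2}=a_{ii}e_{i}^{2}$) and then contradict nondegeneracy. Your reading of the hypothesis is also the intended one: the paper's own proof invokes exactly $e_{i}(Ae_{i})\neq\{0\}$, i.e.\ the ``by the left'' clause of Definition~\ref{37} (which for a commutative algebra is simply nondegeneracy), and not the weaker condition $e_{i}^{2}\neq 0$, as your counterexample rightly shows would be insufficient.
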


\begin{proof}
As $A$ is nondegenerate, we have that $e_{i}(Ae_{i})\neq\{0\}$ for every $i\in\Lambda.$ Suppose that there exists $j\in\Lambda$ with $a_{jj}=0.$ Then, for every $a\in A,$ we have that $\proj_{B,e_{j}}(ae_{j})=0$ and, thus, $e_{j}(Ae_{j})=\{0\},$ a contradiction.
\end{proof}

In fact, in the case of evolution algebras we have the following proposition.

\begin{proposicion}\cite[Proposition 2.30]{cvs}\label{41}
Let $A$ be an evolution algebra with non-zero product and consider the following conditions.

\begin{enumerate}
    \item The algebra $A$ is nondegenerate.
    \item The algebra $A$ is semiprime.
    \item The algebra $A$ has no non-zero evolution ideals $I$ verifying $I^{2}=\{0\}.$
    \item The algebra $A$ is non-degenerate.
\end{enumerate}

Then these conditions relate logically as follows: $1\Rightarrow2\Leftrightarrow3\Rightarrow4.$
\end{proposicion}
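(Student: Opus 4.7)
My plan is to split the chain $1 \Rightarrow 2 \Leftrightarrow 3 \Rightarrow 4$ into four arrows and handle the easier ones first. The implication $2 \Rightarrow 3$ is immediate because every evolution ideal is in particular an ideal. For $1 \Rightarrow 2$, given a non-zero ideal $I$ of $A$ with $I^2 = \{0\}$ and any $0 \neq a \in I$, the inclusion $Aa \subseteq I$ yields $a(Aa) \subseteq I^2 = \{0\}$, and nondegeneracy forces $a = 0$, a contradiction. For $3 \Rightarrow 4$, I fix any natural basis $B = \{e_i\}_{i \in \Lambda}$ of $A$ and set $\Lambda_0 := \{i \in \Lambda : e_i^2 = 0\}$. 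The subspace $J_0 := \linspan\{e_i : i \in \Lambda_0\}$ is an ideal (since $e_j e_i = 0$ for $j \neq i$ by the natural basis condition and $e_i^2 = 0$ for $i \in \Lambda_0$), admits the subfamily $\{e_i : i \in \Lambda_0\}$ of $B$ as a natural basis, and visibly satisfies $J_0^2 = \{0\}$. Hypothesis 3 then forces $J_0 = \{0\}$, that is, $e_i^2 \neq 0$ for every $i \in \Lambda$, which is exactly 4.

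The main obstacle is $3 \Rightarrow 2$. Given a non-zero ideal $I$ of $A$ with $I^2 = \{0\}$, I will produce a non-zero evolution ideal of $A$ with square zero and thereby contradict 3. Note first that the argument for $3 \Rightarrow 4$ above already shows, from hypothesis 3 alone, that $e_i^2 \neq 0$ for every $i \in \Lambda$. Next, set $\Lambda_I := \bigcup_{a \in I} \supp_B(a) \subseteq \Lambda$; for each $i \in \Lambda_I$, pick $a \in I$ with $a_i := \proj_{B,e_i}(a) \neq 0$, so that $a e_i = a_i e_i^2 \in I$ and hence $e_i^2 \in I$. My candidate is $V := \linspan\{e_i^2 : i \in \Lambda_I\} \subseteq I$.

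Three verifications remain. First, since each $e_i^2 \in I$ and every element of $I$ has support inside $\Lambda_I$ by the very definition of $\Lambda_I$, we get $\supp_B(e_i^2) \subseteq \Lambda_I$ for every $i \in \Lambda_I$; combined with the one-line computation $e_i^2 \cdot e_j = \omega_{ji} e_j^2$, this shows $V \cdot e_j \subseteq V$ for every $j \in \Lambda$ (the coefficient $\omega_{ji}$ vanishes whenever $j \notin \Lambda_I$, and $e_j^2 \in V$ whenever $j \in \Lambda_I$), so $V$ is an ideal of $A$. Second, $V \neq \{0\}$ because $\Lambda_I \neq \emptyset$ and every $e_i^2$ with $i \in \Lambda_I$ is non-zero. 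Third, $V \subseteq I$ gives $V^2 \subseteq I^2 = \{0\}$; extracting from $\{e_i^2 : i \in \Lambda_I\}$ a maximal linearly independent subfamily $\{e_i^2 : i \in T\}$ produces a basis of $V$ whose pairwise products all lie in $V^2 = \{0\}$, so this subfamily is automatically a natural basis and $V$ is an evolution ideal of $A$. The existence of $V$ contradicts hypothesis 3 and closes the proof.
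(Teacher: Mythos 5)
Your argument is correct, and it is worth noting that the paper itself gives no proof of this statement: Proposition \ref{41} is quoted from \cite[Proposition 2.30]{cvs}, so you are supplying an argument the text omits. The easy arrows are handled exactly as one would expect ($2\Rightarrow 3$ because evolution ideals are ideals; $1\Rightarrow 2$ because $0\neq a\in I$ with $I^{2}=\{0\}$ gives $a(Aa)\subseteq I^{2}=\{0\}$; $3\Rightarrow 4$ via the evolution ideal $\linspan\{e_{i}\mid e_{i}^{2}=0\}$, which has square zero). The real content is your $3\Rightarrow 2$: from a non-zero ideal $I$ with $I^{2}=\{0\}$ you extract the squares $e_{i}^{2}\in I$ for $i$ in the support set $\Lambda_{I}$, and the computation $e_{i}^{2}e_{j}=\omega_{ji}e_{j}^{2}$ together with $\supp_{B}(e_{i}^{2})\subseteq\Lambda_{I}$ shows that $V:=\linspan\{e_{i}^{2}\mid i\in\Lambda_{I}\}\subseteq I$ is a non-zero ideal; since $V^{2}\subseteq I^{2}=\{0\}$, any basis of $V$ is automatically natural, so $V$ is a non-zero evolution ideal of square zero, contradicting 3. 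All the steps check out (in particular, non-vanishing of the $e_{i}^{2}$ under hypothesis 3 is correctly imported from your $3\Rightarrow 4$ argument, and the extraction of a maximal linearly independent subfamily works in any dimension), and the unused hypothesis that $A$ has non-zero product is harmless. This is a clean, self-contained proof in the same spirit as the source it replaces.
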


It is important to mention here \cite[Remark 2.31]{cvs}, where the authors show that $2\nRightarrow1$ and $4\nRightarrow3.$

From now on we will work mainly with the strongest of the four previous conditions (that is, nondegeneracy) because, even under this ideal situation, things will be complicated. Before introducing the next definition, we clarify that, during all this text, when we speak about an \textit{idempotent} we always mean an \textit{idempotent element}.

\begin{definicion}\label{42}
Let $A$ be an algebra and $e\in A$ be an idempotent (i.e., $e^{2}=e$). We say that $e$ is a \textbf{minimal idempotent} of $A$ if $\id(\{e\})$ is a minimal ideal. In general, we say that an arbitrary element $a\in A$ is a \textbf{minimal element} of $A$ if $\id(\{a\})$ is a minimal ideal of $A.$
\end{definicion}

We note that, following the previous definition, a minimal idempotent is never $0$ because we established before that a minimal ideal has to be different from zero. In fact, this notion could be extended and we could introduce an order between all the elements of $A$ (and, particularly, the idempotents) induced by the order in its lattice of ideals considering the ideals generated by these elements, as it is done or suggested in the discussions after the definitions \cite[Definition 6.11]{sk} or \cite[Definition 8.2.1]{twp}.

We know the following theorem for associative rings. Our intention is to find something similar for evolution algebras.

\begin{teorema}\label{F}\cite[Corollary II.1.6]{mp}\cite[Discussion between Theorem A.8 and Theorem A.9]{mmw}
Let $R$ be a semiprime associative ring. Then every minimal left ideal $I$ of $R$ verifies that there exists an idempotent $e\in I$ such that $I=Re=\id(\{e\}).$
\end{teorema}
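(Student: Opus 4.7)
The plan is to construct the idempotent $e$ through the classical argument exploiting both the minimality of $I$ and the semiprime hypothesis. The first step is to establish that $I^{2}\neq\{0\}$. Since semiprimeness is a condition on two-sided ideals while $I$ is only a left ideal, I would form the two-sided ideal $J:=I+IR$ (this is genuinely two-sided because $RI\subseteq I$, so $RIR\subseteq IR$), and then verify that $J^{2}=\{0\}$ whenever $I^{2}=\{0\}$: each of the four summands $I\cdot I$, $I\cdot(IR)=I^{2}R$, $(IR)\cdot I=I(RI)\subseteq I\cdot I$, and $(IR)(IR)=I(RI)R\subseteq I\cdot I\cdot R$ vanishes. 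Semiprimeness then forces $J=\{0\}$, so $I=\{0\}$, a contradiction.

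Having $I^{2}\neq\{0\}$, I would pick $a\in I$ with $Ia\neq\{0\}$. By associativity $Ia$ is again a left ideal of $R$, clearly contained in $I$, so minimality yields $Ia=I$. In particular there exists $e\in I$ with $ea=a$. Then I would consider the set $K:=\{x\in I:xa=0\}$, which is itself a left ideal of $R$ contained in $I$. Since $ea=a\neq 0$ we have $e\notin K$, so $K\subsetneq I$ and minimality forces $K=\{0\}$. The computation $(e^{2}-e)a=e(ea)-ea=ea-ea=0$ then gives $e^{2}-e\in K=\{0\}$, so $e^{2}=e$ and $e\neq 0$.

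To close, I would observe that $Re$ is a left ideal of $R$ contained in $I$ and non-zero (since $e=e^{2}\in Re$); minimality yields $Re=I$. Because the left ideal generated by $\{e\}$ equals $Re$ (using $e=e^{2}\in Re$), the chain $I=Re=\id(\{e\})$ follows.

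The main obstacle is really only the first step, namely the passage from two-sided semiprimeness of $R$ to $I^{2}\neq\{0\}$ for a \emph{left} ideal $I$. Associativity is used there both to make $I+IR$ an actual two-sided ideal and to collapse the cross-terms of $(I+IR)^{2}$, and it is used again in each subsequent verification that $Ia$ and $K$ are left ideals and in the identity $e^{2}a=e(ea)$. This is precisely what obstructs any literal extension of the result to the non-associative setting of evolution algebras, as the excerpt anticipates and as the subsequent counterexample is meant to confirm.
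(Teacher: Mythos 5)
Your proof is correct: it is the classical Brauer-lemma argument (pass to the two-sided ideal $I+IR$ to get $I^{2}\neq\{0\}$ from semiprimeness, then use minimality on $Ia$ and on the annihilator $K=\{x\in I\mid xa=0\}$ to produce the idempotent $e$ with $Re=I=\id(\{e\})$). The paper itself gives no proof of this theorem --- it is quoted from the cited references --- and your argument is essentially the standard one found there, including the correct observation that the use of associativity at every step is exactly what blocks a literal transfer to evolution algebras, as Example \ref{Exa} then confirms.
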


Unfortunately, this is not the case in general for a nondegenerate evolution algebra. It is even possible to find counterexamples among simple nondegenerate evolution algebras. We see this with an example of an evolution algebra over the finite field $\mathbb{Z}_{3}.$

\begin{ejemplo}\label{Exa}
First of all, we choose an evolution algebra $A$ over $\mathbb{Z}_{3}$ with a natural basis $B=\{e_{1},e_{2}\}$ and multiplication given by $e_{1}e_{1}=ae_{1}+be_{2},$ $e_{2}e_{2}=ce_{1}+de_{2},$ where $a,b,c,d\in\mathbb{Z}_{3}.$ We have to determine $a,b,c,d\in\mathbb{Z}_{3}$ in order to obtain a nondegenerate evolution algebra verifying that $I=A=\linspan(\{e_{1},e_{2}\})$ is a minimal ideal and it is not generated by an idempotent. The first condition that we can see is that, in order to have a nondegenerate evolution algebra and using Proposition \ref{A}, we must have $d\neq0$ and $a\neq 0.$

Moreover, if we want $I$ to be a minimal ideal, then it is necessary that $b\neq0$ and $c\neq0$ because, otherwise, $\id(\{e_{1}\})$ or $\id(\{e_{2}\})$ would be ideals strictly contained in $I,$ which contradicts $I$ being a minimal ideal. In fact, under the hypothesis of nondegeneracy, $A^{2}\neq\{0\}$ and, therefore, $I=A$ is a minimal ideal means that $A$ is a simple algebra. Furthermore, under the hypothesis $d\neq0$ and $a\neq 0,$ we have that $A$ is simple if and only if $b\neq 0, c\neq0$ and $\frac{a}{c}\neq\frac{b}{d}.$ We will see this last statement in more detail.

In one direction, if $A$ is simple, then $b\neq0\neq c$ because otherwise there would be ideals strictly contained in $A.$ Also, if $\frac{a}{c}=\frac{b}{d},$ then $\id(\{ae_{1}+be_{2}\})=\linspan(\{ae_{1}+be_{2}\})\subsetneq A$ and $A$ would not be simple. In the other direction, suppose $b\neq0\neq c$ and $\frac{a}{c}\neq\frac{b}{d}.$ Then, given $0\neq q\in A$ arbitrary, we have $0\neq q=ge_{1}+he_{2}.$ Suppose $g\neq0\neq h.$ Then $qe_{1}=g(e_{1}e_{1})=gae_{1}+gbe_{2}$ so $ae_{1}+be_{2}\in\id(\{q\})$ and $qe_{2}=h(e_{2}e_{2})=hce_{1}+hde_{2}$ so $ce_{1}+de_{2}\in\id(\{q\}).$ Thus, as $b\neq0\neq c$ and $\frac{a}{c}\neq\frac{b}{d},$ we obtain that $\id(\{q\})=A.$ Suppose now, for example, $h=0.$ Then $q=ge_{1}$ and $qe_{1}=g(e_{1}e_{1})=gae_{1}+gbe_{2}$ so $ae_{1}+be_{2}\in\id(\{q\}).$ As $b\neq 0,$ this implies $e_{1},e_{2}\in\id(\{q\})$ and it is the end of our proof.

Now, under the hypothesis of $A$ being simple, we have that $A$ is nondegenerate if and only if $d\neq0$ and $a\neq 0.$ 

We prove now this last statement. First of all, we remember that the hypothesis of $A$ being simple implies directly that $b\neq0\neq c$ and $\frac{a}{c}\neq\frac{b}{d}.$ One of the implications is clear as a consequence of Proposition \ref{A}. In the other direction, suppose $d\neq0$ and $a\neq 0$ and fix $0\neq q\in A$ arbitrary. Then $q=ge_{1}+he_{2}.$ So we have $e_{1}q=g(e_{1}e_{1})=g(ae_{1}+be_{2})=gae_{1}+gbe_{2}$ and $e_{2}q=h(e_{2}e_{2})=h(ce_{1}+de_{2})=hce_{1}+hde_{2}.$ So $q(e_{1}q)=q(gae_{1}+gbe_{2})=(ge_{1}+he_{2})(gae_{1}+gbe_{2})=g^{2}a(e_{1}e_{1})+hgb(e_{2}e_{2})=g^{2}a(ae_{1}+be_{2})+hgb(ce_{1}+de_{2})=(g^{2}a^{2}+hgbc)e_{1}+(g^{2}ab+hgbd)e_{2}$ and $q(e_{2}q)=q(hce_{1}+hde_{2})=(ge_{1}+he_{2})(hce_{1}+hde_{2})=ghc(e_{1}e_{1})+h^{2}d(e_{2}e_{2})=ghc(ae_{1}+be_{2})+h^{2}d(ce_{1}+de_{2})=(ghca+h^{2}dc)e_{1}+(ghcb+h^{2}d^{2})e_{2}.$ Now, suppose $(ghca+h^{2}dc)e_{1}+(ghcb+h^{2}d^{2})e_{2}=0=(g^{2}a^{2}+hgbc)e_{1}+(g^{2}ab+hgbd)e_{2}.$ As $d\neq 0\neq a$ and we are in $\mathbb{Z}_{3},$ this implies $g^{2}+hgbc=g^{2}a^{2}+hgbc=0=ghcb+h^{2}d^{2}=ghcb+h^{2}.$ Thus, we have $g^{2}=h^{2}$ so there are just two possibilities: $g=h=0$ or $g\neq0\neq h;$ but, as $q\neq 0,$ we get $g\neq0\neq h.$ This implies $ghcb+h^{2}d^{2}=gcb+h=0$ so $h=-gcb.$ Similarly, as $g^{2}a^{2}+hgbc=g+hbc=0,$ we obtain $g=-hbc.$ Thus, $0=ghca+h^{2}dc=-h^{2}c^{2}ba+h^{2}dc,$ which implies $c^{2}ba=dc$ so $cba=d$ and, finally, $cb=da,$ which is a contradiction with the fact that $\frac{a}{c}\neq\frac{b}{d}.$ This concludes the proof.

So, if we find an algebra $A$ such that $0\notin\{a,b,c,d\}$ and $\frac{a}{c}\neq\frac{b}{d},$ then this algebra $A$ is simple and nondegenerate. We examine the possibility of the existence of non-zero idempotents under these conditions (simplicity and nondegeneracy). A non-zero idempotent in $I=A$ must be an element $0\neq ke_{1}+le_{2},$ with $l,k\in\mathbb{Z}_{3}$ such that $(ke_{1}+le_{2})^{2}=(ke_{1}+le_{2})(ke_{1}+le_{2})=k^{2}(ae_{1}+be_{2})+l^{2}(ce_{1}+de_{2})=(cl^{2}+ak^{2})e_{1}+(dl^{2}+bk^{2})e_{2}=ke_{1}+le_{2}.$ This means that we need to find a solution $(k,l)\neq(0,0)$ to the system of equations

\begin{equation}
\left\{
\begin{aligned}
k &= cl^{2}+ak^{2}\\
l &= dl^{2}+bk^{2}.
\end{aligned}
\right.
\end{equation}

To find a counterexample, it is enough to exhibit one system of the family of systems that appear when $a,b,c,d$ vary in $\mathbb{Z}_{3}\smallsetminus\{0\}$ verifying $\frac{a}{c}\neq\frac{b}{d}$ such that its only solution is $(k,l)=(0,0).$ The easiest counterexample arises when we set $a=c=d=1$ and $b=2.$ We verify it trying all the possible solutions in the finite field $\mathbb{Z}_{3}.$ As the only solution of the equation 
\begin{equation}
\left\{
\begin{aligned}
k &= l^{2}+k^{2}\\
l &= l^{2}+2k^{2}
\end{aligned}
\right.
\end{equation}
is $(k,l)=(0,0)$ and the choice of the parameters verify the conditions exposed above ($a,b,c,d\notin\{0\}$ and $\frac{a}{c}\neq\frac{b}{d}$), then we have found our counterexample. The evolution algebra $A=\linspan(\{e_{1},e_{2}\}),$ with the multiplication given by $e_{1}e_{1}=e_{1}+2e_{2}, e_{2}e_{2}=e_{1}+e_{2},$ is nondegenerate and simple but it is not generated by any idempotent because there is no idempotent other than $0.$
\end{ejemplo}

Now two questions arise. The first one is that the chosen field in the previous example is very special and we would like to deal with fields closer to natural applications or, at least, to the most common among mathematics, that is, more similar to $\mathbb{R}$ and $\mathbb{C};$ in fact, we think that it is easy to find other counterexamples as the one above for other finite fields using similar techniques. The other question is about forcing the existence of a non-zero idempotent. The previous example does not work just because there is no non-zero idempotent to choose. The second question is easy to answer. Of course, if we force the existence of a non-zero idempotent $e$ in a minimal ideal $I$ of any algebra $A,$ then, obviously, $\id(\{e\})=I.$ The first question is more intricate and it leads us going in depth into the study of the previous example taking a close look at the role developed there by the field and going into detail about what is happening there and what happen when we work with ``nicer" fields. After all, we will see that we are mainly concerned with the existence of non-zero solutions for at least one element of a particular parametric family of systems of equations.

To make the work easier, we pay special attention to simple algebras, as it is the one presented in the example above. Under this hypothesis, our work is simplified just to the search of a non-zero idempotent in the algebra.

\begin{construccion}\label{B}
Let $A$ be a simple nondegenerate evolution algebra of dimension $n$ over a field $\mathbb{K}$ with a natural basis $B=\{e_{i}\mid i\in\{1,\dots,n\}\}$ and the multiplication given by \begin{equation}e_{i}e_{i}=\sum_{j=1}^{n}a_{ij}e_{j}\label{AA}\end{equation} for every $i\in\{1,\dots,n\}.$ Our previous counterexample was a toy one and we had the opportunity to extract easily more information from the coefficients $a_{ij}$ but in this case we cannot go so far. The only thing that it is clear, by Proposition \ref{A}, is that, as $A$ is nondegenerate, then $a_{ii}\neq 0$ for all $i\in\{1,\dots,n\}.$ Now we have to study the possibility of the existence of non-zero idempotents within this algebra. As we already know, if we prove the existence of one non-zero idempotent, then $A$ is generated by this idempotent.

An element $b=\sum_{k=1}^{n} x_{k}e_{k}$ is idempotent if and only if it verifies $$b^{2}=(\sum_{k=1}^{n} x_{k}e_{k})(\sum_{k=1}^{n} x_{k}e_{k})=\sum_{k=1}^{n} x_{k}^{2}e^{2}_{k}=\sum_{k=1}^{n} x_{k}^{2}(\sum_{j=1}^{n}a_{kj}e_{j})=\sum_{j=1}^{n} (\sum_{k=1}^{n}x_{k}^{2}a_{kj})e_{j}=b.$$
That means that $b$ is idempotent if and only if $x_{j}=\sum_{k=1}^{n}x_{k}^{2}a_{kj}$ for all $j\in\{1,\dots,n\}.$ What we obtain is a parametric family of systems of equations whose parameters are precisely the coefficients in the structure matrix of the algebra $A$ relative to the basis $B.$ We just have to study this parametric family of systems of equations. Thus, our problem about the existence of non-zero idempotents in the simple nondegenerate evolution algebra $A$ of dimension $n$ over a field $\mathbb{K}$ reduces to finding a natural basis $B$ of the algebra $A$ such that the coefficients in the structure matrix of the algebra $A$ relative to the basis $B$ generate a system of equations allowing non-zero solutions. This system of equations looks as follows.

For clarity, we write this equation in a more recognizable way.

\begin{equation}\label{E}
\left\{
\begin{aligned}
x_{1} &= a_{11}x_{1}^{2}+\cdots+a_{n1}x_{n}^{2}\\
\vdots\\
x_{n} &= a_{1n}x_{1}^{2}+\cdots+a_{nn}x_{n}^{2},
\end{aligned}
\right.
\end{equation}
with $a_{ij}\neq 0,$ for all $i,j\in\{1,\dots, n\}$ at the same time, subjected to the condition of $A$ (with the product that this structure matrix define in the Equation (\ref{AA}) via the basis $B$) being a simple nondegenerate evolution algebra of dimension $n$ over the field $\mathbb{K}.$ We call this last condition about the coefficients in the system \textbf{condition SN}. When the condition is not asking $A$ to be nondegenerate is called \textbf{condition S} and when the condition is not asking $A$ to be simple is called \textbf{condition N}.

Each one of the equations in the previous system corresponds to a (possibly reducible) quadric algebraic hypersurface in the space $\mathbb{K}^{n}.$
\end{construccion}

Now we have to look in more depth at the systems of equations arising in the previous construction. In the general case and for some arbitrarily chosen parameters, a system of equations like (\ref{E}) does not necessarily have a non-zero solution, as we saw in our previous example. It is also not directly evident whether or not fields such as $\mathbb{Q}, \mathbb{R},$ $\mathbb{C}$ are fields exhibiting non-zero solution for this kind of systems. At a first sight, one could think that the algebraic closedness of $\mathbb{C}$ ensures that these systems will have always a non-zero solution in the complex case (when $\mathbb{K}=\mathbb{C}$). Nevertheless, this is not trivial, because it is easy to find examples of systems of equations similar to (\ref{E}) having no more than the zero solution, for example:
\begin{equation}
\left\{
\begin{aligned}
y &= y^{2}\\
(1-y)x &= (1-y)-1.
\end{aligned}
\right.
\end{equation}

Or, even more similar to (\ref{E}) but without verifying condition S:

\begin{equation}\label{MU}
\left\{
\begin{aligned}
x &= x^{2}-y^{2}\\
y &= x^{2}-y^{2},
\end{aligned}
\right.
\end{equation}
which does not have non-zero solution in any field. In fact, (\ref{MU}) does not verify condition S because the evolution algebra $A$ given by the associated structure matrix has product $e_{1}e_{1}=e_{1}+e_{2}$ and $e_{2}e_{2}=-e_{1}-e_{2},$ which is not simple because $\linspan\{e_{1}+e_{2}\}$ is a proper ideal of $A.$ These kind of examples extend easily to higher dimensions and they are the reason we have to consider the extra conditions S and SN introduced above for the systems under consideration.

In fact, we have to split hairs about the relation between the field and the system even more. We have to put some order in the systems of equations appearing.

\begin{definicion}\label{DDD}
A quadratic expression of the form $x_{1} = a_{1}x_{1}^{2}+\cdots+a_{n}x_{n}^{2},$ as those appearing in the system (\ref{E}) above, is said to be in \textbf{zero-intersect-evolution-axis form}. We fix a field $\mathbb{K}.$ A system of the form
\begin{equation}\label{GE}
\left\{
\begin{aligned}
x_{1} &= p_{1}(x_{1},\dots,x_{n})\\
\vdots\\
x_{n} &= p_{n}(x_{1},\dots,x_{n}),
\end{aligned}
\right.
\end{equation}
where all the polynomials $p_{i}\in\mathbb{K}[X_{1},\dots,X_{n}],$ is said to be \textbf{compatible formally evaluative trivializing} if it is possible to prove that its only solution is the trivial one $(x_{1},\dots,x_{n})=(0,\dots,0)$ using only the sum of the field and evaluations of the polynomials $p_{i}$ over the unknowns $x_{1},\dots,x_{n}$ and the constant $0$ to arrive at evaluations ensuring that the $p_{i}(x_{1},\dots,x_{n})$ are forced to be constantly zero by the system. That is, we forbid us to use most of the field particular structure and arbitrary evaluations of the polynomials over the field $\mathbb{K}$ itself or other variables while solving the system.
\end{definicion}

These systems easy to solve are problematic to some extent for our purposes precisely because they are too easy to solve and this provoke that they present some pathological behaviour when we want to perform a search for fields solving certain kind of systems in a non-trivial way. The main problem with this kind of systems is that they are not going to exhibit new solutions when we consider field extensions of $\mathbb{K}$ because their solution is not dependent in any form from any extension of the field. Thus and moreover, among those systems we will see that we can find systems only admitting the trivial solution whichever the field $\mathbb{K}$ is. We see some examples increasing in difficulty to elucidate better and make clear the use, necessity and meaning of this definition.

\begin{ejemplo}
The system $x=0$ is a compatible formally evaluative trivializing system. In fact, we have that $x=0$ automatically and then we just proved that its only solution is the trivial one.
\end{ejemplo}

\begin{ejemplo}
The system $x=x$ is not a compatible formally evaluative trivializing system because in the field $\mathbb{Z}_{2}$ it has the solutions $x=1$ and $x=0.$
\end{ejemplo}

\begin{ejemplo}
The system $x=2x$ is a compatible formally evaluative trivializing system because we can subtract $x$ in both sides to get $0=x.$
\end{ejemplo}

\begin{ejemplo}
The system $x=2x^{2}$ is not a compatible formally evaluative trivializing system because in the field $\mathbb{R}$ it has the solutions $x=0$ and $x=\frac{1}{2}.$
\end{ejemplo}

\begin{ejemplo}
Take a field $\mathbb{K}$ with $\car(\mathbb{K})\neq2.$ The system $x=\frac{1}{2}x$ is a compatible formally evaluative trivializing system because we can subtract $\frac{1}{2}x$ in both sides to obtain $x-\frac{1}{2}x=\frac{2}{2}x-\frac{1}{2}x=\frac{2-1}{2}x=\frac{1}{2}x=0,$ which implies $x=0.$
\end{ejemplo}

\begin{remark}
As we saw in the previous example, we are allowed to use the field properties about the multiplication such as distributivity and existence of multiplicative inverses for non-zero elements. What we are not allowed to do is to actually \textit{multiplying} by an arbitrary non-zero element of the field or an unknown. Thus, the following reasoning would be a wrong way of proving that the previous system is a compatible formally evaluative trivializing system:

Take a field $\mathbb{K}$ with $\car(\mathbb{K})\neq2.$ The system $x=\frac{1}{2}x$ is a compatible formally evaluative trivializing system because we can multiply by $2\neq0$ both sides to get $2x=x,$ which we know is compatible formally absolutely trivializing.

The problem here is that we cannot use the multiplication by the scalar $2$ because we are reducing the amount of the field structure that we are able to use, and, precisely, multiplication by an arbitrary scalar is one of these features restricted for us.
\end{remark}

\begin{ejemplo}
The system $x=2x^{2}$ is not a compatible formally evaluative trivializing system because in the field $\mathbb{R}$ it has the solutions $x=0$ and $x=\frac{1}{2}.$
\end{ejemplo}

\begin{ejemplo}\label{QQ}
Consider $\mathbb{K}=\mathbb{Q}.$ The system formed by $x=7y^{2}$ and $y=5x^{2}$ has only the solution $(x,y)=(0,0)$ in $\mathbb{Q}.$ Nevertheless, this system is not compatible formally evaluative trivializing because there is no way to prove that its only solution is the trivial one using only the sum of the field and evaluations of the polynomials $p_{1}=7Y^{2}, p_{2}=5X^{2}$ over the unknowns $x,y$ and the constant $0$ to arrive at evaluations ensuring that the $p_{i}(x,y)$ are forced to be constantly zero by the system. In fact, this is impossible because, if that could be possible to achieve, then this would mean that this system is not having any other solution in any extension of $\mathbb{Q},$ which is not true because it has a non-zero solution in $\mathbb{R}.$
\end{ejemplo}

Thus, in the previous example we see that who is forcing the system to have only the trivial solution is the chosen field and not the structure of the system itself. In the next example we shall see how the structure of the system can be the reason for the triviality of the set of solutions.

\begin{ejemplo}
The system formed by $x = x^{2}-y^{2}$ and $y = x^{2}-y^{2}$ has only the solution $(x,y)=(0,0)$ in whichever field. We can prove that its only solution is the trivial one using only the sum of the field and evaluations of the polynomials $p_{1}=X^{2}-Y^{2}, p_{2}=X^{2}-Y^{2}$ over the unknowns $x,y$ and the constant $0$ to arrive at evaluations ensuring that the $p_{i}(x,y)$ are forced to be constantly zero by the system. We see this. The equations tell us that $x=x^{2}-y^{2}=y$ so $x=p_{1}(x,y)=x^{2}-y^{2}=p_{1}(x,x)=x^{2}-x^{2}=0.$ Thus, $x=0=y$ and we prove it using only the operations allowed. In fact, this tell us that this system is just having the trivial solution whatever field is considered.
\end{ejemplo}

This example is specially important because, as each field has the coefficients involved in this example, it is valid for every field. It is a universal system whose structure determine the triviality of the set of solutions not caring at all about the internal structure of the field. This radical example is easily generalizable to larger dimensions or to more unknowns, equations in the systems or variables.

Coming back to our particular system (\ref{E}), we particularize Definition \ref{DDD}.

\begin{definicion}\label{56}
Let $\mathbb{K}$ be field. A system of equations with coefficients in $\mathbb{K}$ which is not compatible formally evaluative trivializing and whose equations are all in zero-intersect-evolution-axis form
\begin{equation}\label{EE}
\left\{
\begin{aligned}
x_{1} &= a_{11}x_{1}^{2}+\cdots+a_{n1}x_{n}^{2}\\
\vdots\\
x_{n} &= a_{1n}x_{1}^{2}+\cdots+a_{nn}x_{n}^{2}
\end{aligned}
\right.
\end{equation}
is said to be a \textbf{non-trivial zero-intersect-evolution-axis system}.
\end{definicion}

The next important theorem will tell us that, given a positive integer $n,$ if a choice of the structure matrix $(a_{ij})$ with $1\leq i,j\leq n$ in a basis $B$ produces a simple algebra (that is, if this choice verifies the condition S), then this choice generates a non-trivial zero-intersect-evolution-axis system (\ref{EE}). This will be helpful to ensure that our next definition works as we want.

\begin{teorema}\label{THE}
Let $A$ be a simple evolution algebra of dimension $n$ over a field $\mathbb{K}$ with a natural basis $B=\{e_{i}\mid i\in\{1,\dots,n\}\}$ and the multiplication given by \begin{equation}e_{i}e_{i}=\sum_{j=1}^{n}a_{ij}e_{j}\end{equation} for every $i\in\{1,\dots,n\}.$ Then the system of equations
\begin{equation}\label{EEEE}
\left\{
\begin{aligned}
x_{1} &= a_{11}x_{1}^{2}+\cdots+a_{n1}x_{n}^{2}\\
\vdots\\
x_{n} &= a_{1n}x_{1}^{2}+\cdots+a_{nn}x_{n}^{2}
\end{aligned}
\right.
\end{equation}
is a non-trivial zero-intersect-evolution-axis system.
\end{teorema}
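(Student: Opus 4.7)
The plan is to establish Theorem \ref{THE} by contradiction, exploiting the field-independence of formal trivializations. Suppose the system (\ref{EEEE}) were compatible formally evaluative trivializing, so that by Definition \ref{DDD} there exists a derivation of $x_1 = \cdots = x_n = 0$ using only additions in the field and evaluations of the polynomials $p_j := a_{1j}X_1^2 + \cdots + a_{nj}X_n^2$ at the unknowns $x_i$ or at the constant $0$. The key observation is that such a derivation is purely syntactic: it invokes no arithmetic of $\mathbb{K}$ beyond its additive structure and the specific coefficients $a_{ij}$ already appearing. In particular, every step remains verbatim valid over any field extension $\mathbb{L}/\mathbb{K}$, so the system would have only the trivial solution in $\mathbb{L}^n$ for every such $\mathbb{L}$---precisely as Example \ref{QQ} already illustrates, where a nontrivial solution in $\mathbb{R}$ sufficed to preclude compatible formal evaluative trivializability over $\mathbb{Q}$.

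The task therefore reduces to exhibiting a single field extension in which (\ref{EEEE}) admits a nontrivial solution, for which I would take $\mathbb{L} = \overline{\mathbb{K}}$. Nontrivial solutions then correspond exactly to nonzero idempotents of the scalar extension $A \otimes_{\mathbb{K}} \overline{\mathbb{K}}$, which carries the same structure matrix relative to $B$. I would view the system as cutting out an affine variety $V \subset \mathbb{A}^n_{\overline{\mathbb{K}}}$ and argue that simplicity forces $|V| > 1$. Concretely, homogenising via $x_j \mapsto X_j/X_0$ yields $n$ quadrics $X_j X_0 - p_j(X_1,\dots,X_n) = 0$ in $\mathbb{P}^n_{\overline{\mathbb{K}}}$; if their intersection is zero-dimensional, Bezout gives exactly $2^n$ points counted with multiplicity. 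The origin $[1:0:\cdots:0]$ is a simple zero, since the Jacobian of the affine system at $0$ is $I_n$, so the remaining $2^n - 1$ points are either further affine solutions (which is what I want) or points at infinity $X_0 = 0$, corresponding to nonzero vectors $a = \sum X_i e_i \in A \otimes \overline{\mathbb{K}}$ with $a^2 = 0$. Simplicity of $A$ is then invoked to rule out the extremal configuration in which the entire remaining mass concentrates at infinity, the heuristic being that an overabundance of such null vectors would force a nontrivial proper ideal in $A$.

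The main obstacle is precisely this last step: bounding the scheme-theoretic multiplicity at infinity strictly below $2^n - 1$ using simplicity alone. Two delicate subcases must be treated separately: the positive-dimensional intersection case, where Bezout does not apply directly but any positive-dimensional component automatically contributes nontrivial points; and the relationship between the null cone $\{a : a^2 = 0\}$ and the ideal structure of $A$, which is subtle because evolution algebras are nonassociative (as emphasised in the discussion preceding Example \ref{Exa}). A potentially cleaner alternative would be to invoke a fixed-point argument for the quadratic self-map $F\colon\overline{\mathbb{K}}^n \to \overline{\mathbb{K}}^n$ defined by $F(x)_j = \sum_i a_{ij} x_i^2$, whose nonzero fixed points are exactly the sought idempotents; but making such an argument degeneracy-free again relies essentially on simplicity, as Example \ref{MU} shows that without this hypothesis the structure matrix can yield a system admitting only the origin in every extension.
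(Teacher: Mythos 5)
There is a genuine gap, and you have already put your finger on it yourself: your entire argument funnels into the claim that the system (\ref{EEEE}) acquires a nontrivial solution over $\overline{\mathbb{K}}$, and the step that would deliver this (bounding the intersection multiplicity at infinity strictly below $2^{n}-1$ using simplicity) is left open. That missing step is not a technicality: the existence of nonzero idempotents in simple evolution algebras over ``nice'' fields is essentially the content of Conjecture \ref{Con}, which the paper explicitly does not prove, and Example \ref{Exa} shows the null-cone-versus-ideal heuristic you invoke is delicate precisely because the algebras are nonassociative. So as written, your proof of Theorem \ref{THE} rests on an unproved statement at least as hard as an open conjecture of the paper. (Your first step --- that a compatible formally evaluative trivialization is purely syntactic and hence persists under field extension, so one nontrivial solution over any $\mathbb{L}\supseteq\mathbb{K}$ suffices --- is sound and consistent with the discussion around Example \ref{QQ}; it is the existence half that is missing.)

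The paper's own proof avoids solution-existence entirely and is far more elementary. Simplicity gives $\id(\{e_{1}^{2}\})=A$, and via \cite[Corollary 3.7]{cvs} one gets $\linspan(\{e_{i}^{2}\mid i\})=A$, so the structure matrix $(a_{ij})$ has rank $n$. Consequently no right-hand side of (\ref{EEEE}) is zero and no two right-hand sides coincide, which is exactly what blocks the only admissible first move in a formal evaluative trivialization (one needs an identity of the form $p_{i}=0$ or $p_{i}=p_{j}$ among the right-hand sides to begin substituting, as in the system (\ref{MU})). Hence the system is not compatible formally evaluative trivializing regardless of whether it has any nontrivial solutions over $\mathbb{K}$ or any extension. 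If you want to salvage your route, you would need to either prove the algebraically closed case of Conjecture \ref{Con} or, more efficiently, replace the existence argument with this rank observation.
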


\begin{proof}
As $A$ is simple, particularly $e_{1}^{2}\neq0,$ and then $$\id(\{e_{1}^{2}\})=\id(\{\sum_{j=1}^{n}a_{1j}e_{j}\})=A.$$ We will see that $\linspan(\{\sum_{j=1}^{n}a_{ij}e_{j}\mid i\in\{1,\dots,n\}\})=A.$ One inclusion is clear, for the other inclusion we use \cite[Corollary 3.7]{cvs}, which tells us that $$A=\id(\{e_{1}^{2}\})=\linspan(\{e_{i}^{2}\mid i\in D(1)\cup\{1\}\})\subseteq$$\\$$\linspan(\{e_{i}^{2}\mid i\in\{1,\dots,n\}\})=\linspan(\{\sum_{j=1}^{n}a_{ij}e_{j}\mid i\in\{1,\dots,n\}\}).$$ Hence, the matrix $(a_{ij})$ has rank $n.$ In particular, that means that there can be no two equal right sides in the system of equations (\ref{EEEE}) nor a right side equal to zero. Thus, it is not possible to prove that its only solution is the trivial one $(x_{1},\dots,x_{n})=(0,\dots,0)$ using only the sum of the field and evaluations of the polynomials $p_{i}=a_{1i}X_{1}^{2}+\cdots+a_{ni}X_{n}^{2}$ over the unknowns $x_{1},\dots,x_{n}$ and the constant $0$ to arrive at evaluations ensuring that the $p_{i}(x_{1},\dots,x_{n})$ are forced to be constantly zero by the system because we do not have equivalences allowing the first step of the evaluation process permitted. Therefore, we have just proved that (\ref{EEEE}) is not a compatible formally evaluative trivializing system. As, moreover, every equation is in zero-intersect-evolution-axis form, we have that (\ref{EEEE}) is a non-trivial zero-intersect-evolution-axis system, as we wanted to prove.
\end{proof}

If we look at our particular system (\ref{E}), then we know that we should have in mind the conditions S, N and SN when we are thinking about the set of non-zero solutions to this system. The previous theorem allow us to get free from the direct consideration of these problematic conditions when we are dealing with fields presenting ``enough" non-zero solutions for a ``big" subset of the possible choices of coefficients on those systems in a sense that we will make precise in the next definition. This will be extremely helpful for our following discussions and purposes about the study of evolution algebras.

Moreover, when we are studying them, we have to be sure that we are eliminating from our considerations the problematic compatible formally evaluative trivializing systems that we exposed before because, as we showed, they determine its set of non-zero solutions in a way that mostly depends on the structure of the system of equations itself and not much on the structure of the field under consideration. This feature of these systems hinders us from posing questions and making conjectures concerning the behaviour of the set of non-zero solutions for these systems with respect to the fields under consideration. That is, interesting discussions about how are these sets of solutions changing when we make the fields vary along their possible extensions, completions, closures or other constructions could be hidden behind the problematic produced by these pathological systems. Therefore we hope that avoiding these systems we can reach new important insights into the behaviour of the interrelations between the set of non-zero solutions to systems like (\ref{EE}) and the fields under consideration. Thus, to avoid bigger issues, we will focus our attention on systems like (\ref{EE}), that is, on non-trivial zero-intersect-evolution-axis systems.

Now we can think about the set of solutions of non-trivial zero-intersect-evolution-axis systems. We have to be specially careful here because the existence of non-zero solutions for (\ref{EE}) could look easy to check at a first sight when the field $\mathbb{K}$ is algebraically closed using the Hilbert's Nullstellensatz for homogeneous systems of equations as stated in \cite[Theorem 6.1.10]{vvp} but it is easy to realize after looking more in depth that the conditions for the application of this theorem are not fulfilled given the fact that our polynomials are not homogeneous and that we have as many polynomials as variables while the theorem requires one variable more than polynomials. In general, the application of the Hilbert's Nullstellensatz is problematic as a consequence of the existence of the trivial solution and it is also not absolutely satisfactory because it seems that the exigency of the field $\mathbb{K}$ being algebraically closed is too strong here. In fact, one of the questions that we will pose is about what are the conditions necessary or sufficient over the fields so they are acceptable enough to possess non-zero solution to non-trivial zero-intersect-evolution-axis systems.

However, the existence of non-zero solutions for non-trivial zero-intersect-evolution-axis systems with the form given by (\ref{EE}) looks easy to check in the real case (when $\mathbb{K}=\mathbb{R}$) for $n=dim(A)=2.$ The techniques to verify this would involve the use of implicit derivation and/or some geometrical considerations for the nondegenerate conics appearing; the degenerate cases could be studied one by one. Finally, we would obtain that, when (\ref{EE}) is a non-trivial zero-intersect-evolution-axis system, it has always a non-zero solution in the real case (when $\mathbb{K}=\mathbb{R}$) when $\dim(A)=2.$ The techniques that we think that could be used in this case seem to generalize to the complex case (when $\mathbb{K}=\mathbb{C}$) and, also, to any natural dimension $n$ in general because they are related to the nice topological and differentiable classical structures available in $\mathbb{R}^{n}$ and $\mathbb{C}^{n}.$ We will not try to prove this here because it would be too long and far from our main purpose, and also we will pose soon some conjectures that contain the general questions about this topic in the broad sense. In the case of $\mathbb{Q},$ the intuition points to the opposite direction and it seems that (\ref{EE}) is not always solvable (in $\mathbb{Q}$) in a non-trivial way for any given choice of the parameters making (\ref{EE}) a non-trivial zero-intersect-evolution-axis system. Indeed, Example \ref{QQ} is the counterexample proving that this last intuition about $\mathbb{Q}$ is true.

Now we set some terminology to refer to the relation between the fields and the parametric family of systems of equations given by (\ref{EE}) when we choose the parameters in the fields so they are non-trivial zero-intersect-evolution-axis systems.

\begin{definicion}\label{58}Let $\mathbb{K}$ be a field. We say that $\mathbb{K}$ is a \textbf{field under simple evolution algebras of dimension $n$ with non-zero idempotents} or an \textbf{$n$-FSEANI} if the parametric family of systems of equations given by (\ref{EE}) has a non-zero solution in the field $\mathbb{K}$ for every choice of the parameters $a_{ij}$ in the field $\mathbb{K}$ making (\ref{EE}) a non-trivial zero-intersect-evolution-axis system.\end{definicion}

This definition, together with the Theorem \ref{THE}, tells us that a field $\mathbb{K}$ which is an $n$-FSEANI can be used as the underlying field of an evolution algebra $A$ of dimension $n$ with non-zero idempotents for any choice of the structure matrix (for a given natural basis $B$) subjected to the condition of making the algebra $A$ simple and (\ref{EE}) a non-trivial zero-intersect-evolution-axis system. Following this definition we have that Example \ref{Exa} proves that $\mathbb{Z}_{3}$ is not a $2$-FSEANI. Another easy example is given by $\mathbb{Q},$ which is clearly not a $2$-FSEANI, as the system given by the equations $x=3y^2$ and $y=2x^2$ has no non-zero solution in the field $\mathbb{Q},$ although this equation does have a non-zero solution in an extension of $\mathbb{Q}.$

\begin{definicion}\label{59}
We say that a choice of parameters in (\ref{EE}) or its associated coefficient matrix $M:=(a_{ij})_{i,j\in\{1,\dots,n\}}$ \textbf{gives a non-zero idempotent in its associated evolution algebra over $\mathbb{K}$} or that $M$ is a \textbf{$\mathbb{K}$-nnidempassevalg matrix (choice)} if the parametric family of systems of equations given by (\ref{EE}) has a non-zero solution over $\mathbb{K}$ for this matrix (choice) of parameters.
\end{definicion}

With these notations and terminology established, we are able to set our conjectures.

\begin{conjetura}\label{Con}
The fields $\mathbb{R}$ and $\mathbb{C}$ are $n$-FSEANIs for every $n\in\mathbb{N}.$
\end{conjetura}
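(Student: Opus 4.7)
The strategy splits naturally into the complex and real cases, with the former admitting a clean algebro-geometric argument and the latter being substantially more delicate.

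For $\mathbb{K}=\mathbb{C}$, the plan is to invoke B\'ezout's theorem in projective space. Homogenize (\ref{EE}) with a new variable $z$: each equation $x_i = \sum_{k} a_{ki} x_k^2$ becomes the homogeneous quadric $x_i z - \sum_k a_{ki} x_k^2 = 0$ in $\mathbb{P}^n(\mathbb{C})$. The trivial affine solution is the point $(0:\cdots:0:1)$, and computing the Jacobian of $x - Q(x)$ at the origin yields the identity matrix (because $Q$ is homogeneous of degree two), so the trivial solution has intersection multiplicity one. Points at infinity satisfy $Q_i(x) = \sum_k a_{ki}x_k^2 = 0$ for all $i$; under the substitution $y_k := x_k^2$ this becomes the linear system $A^{T} y = 0$ with $A := (a_{ij})$. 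In the setting of Theorem \ref{THE}, $A$ has rank $n$, so $y = 0$ forces $x = 0$, giving no points at infinity. B\'ezout then yields $2^n$ intersection points counted with multiplicity in $\mathbb{P}^n(\mathbb{C})$, all affine; removing the trivial one leaves at least $2^n - 1 \geq 1$ non-zero complex solutions (and if the intersection is positive-dimensional, the excess components cannot hide at infinity and must produce infinitely many non-zero affine solutions). For non-trivial ZIEA systems whose coefficient matrix has rank strictly below $n$, the plan would be to exploit linear dependencies among the right-hand sides to reduce, via a change of coordinates, to a lower-dimensional non-trivial ZIEA system and invoke induction on $n$, with the subtle point being the verification that the reduced system remains non-trivial ZIEA in the sense of Definition \ref{DDD}.

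For $\mathbb{K}=\mathbb{R}$, the plan is Brouwer-degree theory applied to $G(x) := x - Q(x)$. The origin is an isolated zero of $G$ with local degree $+1$ (since $dG|_0 = I$), and on a large sphere of radius $R$ the map $G$ is homotopic, after the rescaling $x \mapsto Rx$ and division by $R^2$, to the degree-two homogeneous map $-Q$. Assuming one can first show that the non-trivial ZIEA hypothesis rules out real zeros of $Q$ off the origin (the proscribed example (\ref{MU}) is precisely such a case, which reinforces the impression that the ZIEA condition is designed to exclude this pathology), the Brouwer degree of $-Q$ on $S^{n-1}$ is a well-defined integer $d$; whenever $d \neq 1$, additivity of the degree forces additional zeros of $G$ and produces a non-zero real idempotent. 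A short topological computation using that $Q$ is even (so the induced map factors through $\mathbb{R}P^{n-1}$) shows $d=0$ when $n$ is odd, and $d$ is even when $n=2$, yielding the conjecture in those cases.

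The principal obstacle is the real case for general even $n \geq 4$, where $d$ could in principle equal $1$ and degree theory becomes inconclusive, coupled with the rank-deficient sub-case where $Q$ may vanish on a non-trivial real cone and the degree argument does not directly apply. A refined approach would combine (i) the $2^n - 1$ non-trivial complex solutions from the B\'ezout count together with the involution given by complex conjugation, producing a parity constraint on the count of real solutions, with (ii) a continuous deformation within the space of real non-trivial ZIEA coefficient matrices, reducing the general case to explicit model matrices (diagonal ones, or small perturbations of the identity) where real non-zero idempotents are manifest. Identifying a structural invariant that prevents all non-trivial complex solutions from pairing off into conjugate pairs under the non-trivial ZIEA hypothesis is the crux of the real case and the chief difficulty standing between the complex proof and the full conjecture.
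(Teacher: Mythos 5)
First, be aware that the paper offers no proof to compare against: the statement is deliberately left as an open conjecture, and the surrounding discussion only gestures at an argument for $n=2$ over $\mathbb{R}$ via the geometry of conics before explicitly declining to pursue it. Judged on its own terms, the full-rank half of your plan is essentially sound and goes well beyond the paper. Over $\mathbb{C}$, when the matrix $(a_{ij})$ has rank $n$, the substitution $y_{k}:=x_{k}^{2}$ does kill all points at infinity, the origin is a multiplicity-one intersection point because the Jacobian of $x-Q(x)$ at $0$ is the identity, finiteness of the projective intersection is automatic (a positive-dimensional component would have to meet the hyperplane at infinity), and B\'ezout yields $2^{n}-1\geq 1$ further affine, hence non-zero, solutions. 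Over $\mathbb{R}$, the same substitution shows that full rank already forces $Q$ to have no non-trivial real zeros, so the hypothesis you were ``assuming'' comes for free in this case; and you are more pessimistic than necessary about even $n\geq 4$: the standard fact that an even self-map of a sphere has even degree (and degree $0$ on even-dimensional spheres) gives $d\in 2\mathbb{Z}$, hence $d\neq 1$, for every $n\geq 2$, so the degree argument closes the entire full-rank real case. Since Theorem \ref{THE} shows that every system arising from a simple evolution algebra has coefficient matrix of rank $n$, this part of your proposal already covers every use the paper actually makes of the FSEANI property (Theorems \ref{80}, \ref{88} and \ref{108}).

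The genuine gap is the rank-deficient case, and it is not a technicality. Definition \ref{58} quantifies over all choices making (\ref{EE}) a non-trivial zero-intersect-evolution-axis system, and Definition \ref{DDD} does not confine these to full-rank matrices; Theorem \ref{THE} gives only one implication. Your proposed reduction (linear change of coordinates plus induction) leaves the class: after a general linear substitution the right-hand sides are no longer diagonal sums of squares, so the reduced system is not in zero-intersect-evolution-axis form and the inductive hypothesis does not apply --- you flag this yourself, but it is precisely where the proof stops. Worse, because Definition \ref{DDD} is informal, the rank-deficient case threatens the statement itself: the system $x=x^{2}+y^{2}$, $y=ix^{2}+iy^{2}$ over $\mathbb{C}$ has only the trivial solution (the two equations force $y=ix$, whence $x=x^{2}-x^{2}=0$), yet the only visible way to ``formally'' trivialize it multiplies an equation by the scalar $i$, which Definition \ref{DDD} and the remark clarifying it forbid; an analogous real example is $x=2x^{2}-y^{2}$, $y=2\sqrt{2}\,x^{2}-\sqrt{2}\,y^{2}$. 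If such systems count as non-trivial zero-intersect-evolution-axis systems, then neither $\mathbb{C}$ nor $\mathbb{R}$ is even a $2$-FSEANI and the conjecture fails as literally stated. So before the rank-deficient half of your plan (or the conjugation/parity idea for the real case, which remains only a hope) can be carried out, the class of systems being quantified over must be made precise --- most naturally by restricting to systems whose coefficient matrix has rank $n$, which is what Theorem \ref{THE} actually delivers --- and with that reformulation your two arguments, completed as above, do constitute a proof.
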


We could ask more questions about what type of fields verify this condition for a given dimension $n$ or for special choices. It seems (but it is not clear) that algebraic closedness is not necessary but it is sufficient. Also, a real closure of the ordered field of rational numbers, that is, for example, the field $\mathbb{R}_{\alg}$ of real algebraic numbers, is a strong candidate to be an $n$-FSEANIs for every $n\in\mathbb{N}.$ Thus, this tells us that maybe it is enough with other kind of closures and leads our questions to the direction of considering different closures for the fields. We could even try to extend our considerations from idempotens ($e^{2}=e$) to other special elements verifying other interesting identities. This topic is interesting and deserves a deeper study and analysis than the one we are able to do here because this falls out of the scope of this work.

Until this point, this discussion has allowed us to see that it is not true that, for every field $\mathbb{K},$ every simple evolution algebra over $\mathbb{K}$ is generated by an idempotent, as showed by our Example \ref{Exa}. Nonetheless, we suspect that every simple evolution algebra over $\mathbb{K}$ is generated by an idempotent in very important an representative cases, such as simple evolution algebras over $\mathbb{K}=\mathbb{R}$ or over $\mathbb{K}=\mathbb{C},$ which is what we wrote in the Conjecture \ref{Con}. With this in mind, we can pass to the next topic.

The restriction to simple algebras was justified by the existence of counterexamples among them to the direct equivalent result in the frame of evolution algebras to the fact expressed in Theorem \ref{F} and widely known in associative ring theory. However, this restriction is very strong for our next purposes and we can use what we learned in the previous pages of this section to go beyond this restriction under adequate hypothesis.

\begin{definicion}\label{61}
Let $A$ be an algebra and $\mathcal{I}$ the set of its minimal ideals among those which are different from $\{0\}.$ Then the \textbf{(ideal) socle of $A$} is defined as the sum of the minimal ideals of $A$ in $\mathcal{I},$ that is, $\soc(A):=\sum_{I\in\mathcal{I}}I.$ Now let $\mathcal{E}$ be the set of the minimal evolution ideals of $A$ among those evolution ideals of $A$ which are different from $\{0\}.$ Then the \textbf{(ideal) evolution socle of $A$} is defined as the sum of the minimal evolution ideals of $A$ in $\mathcal{E},$ that is, $\ev \soc(A):=\sum_{E\in\mathcal{E}}E.$ Where, when the sums are empty, we adopt the usual convention that says that an empty sum is equal to the trivial ideal $\{0\}.$
\end{definicion}

It is clear that, if $A$ is an algebra of finite dimension, every minimal evolution ideal $E$ of $A$ among those evolution ideals of $A$ which are different from $\{0\}$ verifies that either $E$ is itself a minimal ideal among all the ideals of $A$ different from $\{0\}$ or all the non-zero ideals $I$ of $A$ contained in $E$ are not evolution ideals (i.e., these ideals do not have natural bases). Thus, for every minimal evolution ideal $E$ of $A$ among those evolution ideals of $A$ which are different from $\{0\}$ we can consider its \textbf{$A$-socle}, denoted $A\mbox{-soc}(E),$ as the sum of the minimal ideals of $A$ among all the ideals of $A$ different from $\{0\}$ and contained in $E.$ Clearly, $A\mbox{-soc}(E)\subseteq\soc(A).$

\begin{warning}\label{62}
These definitions could lead us to the confusion of thinking that we have $\soc(A)\subseteq\ev\soc(A)$ because every minimal ideal $I$ of $A$ is contained in some minimal evolution ideal $E$ among the evolution ideals of $A,$ where $E$ can be chosen from $\MinEvid(I).$ But this last sentence is not evident and it could not be true. Indeed, it could happen that the minimal ideal $I$ of $A$ is contained in an evolution ideal $E$ of $A$ from $\MinEvid(I)$ such that $E,$ although minimal among the evolution ideals containing $I,$ has inside it another non-zero evolution ideal $G\subseteq E$ such that $I\nsubseteq G,$ which would imply that $E$ is not a minimal evolution ideal $E$ of $A$ among those evolution ideals of $A$ which are different from $\{0\}$ because $G$ would be a non-zero evolution ideal smaller than $E.$
\end{warning}

This leads us also to another interesting concept. Given a minimal ideal $I$ of $A$ we can consider its $A\mbox{-ev}\soc(I)$ as the sum of all the minimal evolution ideals among those evolution ideals of $A$ which are different from $\{0\}$ that can be found inside the ideals of $\MinEvid(I).$

As one can note promptly, the $A$-evolution socle works as an \textit{evolution dual} notion of the $A$-socle. Surprisingly, the different order structures emerging from the sets of substructures of evolution algebras present these kind of aspects that we think are not only a feature of the evolution attribute but something \textit{deeper} concerning the order inside these sets.

The relations between the different socles defined above has to be studied more in depth. In fact, we let here an important question for the future concerning the warning above.

\begin{cuestion}\label{63}
What are the conditions necessary over the algebra $A$ in order to have $\soc(A)\subseteq\ev\soc(A).$
\end{cuestion}

Unfortunately, as a consequence of a lack of space and time at this moment, we have to pass to the next topic. However, at the end of this section we will be able to say more about this question, although we will not be able to solve it completely.

Now we have two possibilities depending on how strong we want to set our hypothesis. The first possibility is to follow exactly what the examples show to us and restrict ourselves to the study of evolution ideals (this is because in our previous results we have used simple evolution algebras, which, of course, are an example of minimal ideals of themselves having natural bases). This would lead us to the two following definitions

\begin{definicion}Let $A$ be an algebra over a field $\mathbb{K}$ verifying that every minimal evolution ideal $I$ (i.e., minimal among the evolution ideals) of $A$ is generated by an idempotent. We say that $A$ is an \textbf{algebra with minimal evolution ideals generated by idempotents}.\end{definicion}

\begin{definicion}Let $A$ be an algebra over a field $\mathbb{K}$ verifying that every evolution ideal $I$ (i.e., with a natural basis) of $A$ is generated by an idempotent. We say that $A$ is an \textbf{algebra with evolution ideals generated by idempotents}.\end{definicion}

The study of these types of structures would lead us towards the neighbourhoods of the evolution socle but this would give us not a lot of information about its socle, at least at a first sight. For this reason we will choose a way with stronger hypothesis. This way avoids the restriction on evolution ideals and leads us to set the following concepts.

\begin{definicion}
Let $A$ be an algebra over a field $\mathbb{K}$ verifying that every minimal ideal $I$ with a natural basis (i.e., which is also an evolution ideal) of $A$ is generated by an idempotent. We say that $A$ is an \textbf{algebra with minimal and evolution ideals generated by idempotents}.
\end{definicion}

\begin{definicion}
Let $A$ be an algebra over a field $\mathbb{K}$ verifying that every minimal ideal $I$ of $A$ is generated by an idempotent. We say that $A$ is an \textbf{algebra with minimal ideals generated by idempotents}.
\end{definicion}

Also, it is clear that if $A$ is an algebra with minimal ideals generated by idempotents, then $A$ is an algebra with minimal and evolution ideals generated by idempotents. In the opposite direction it is clear that, if $A$ is an algebra with minimal and evolution ideals generated by idempotents whose minimal ideals are all evolution ideals (i.e., have a natural basis), then $A$ is an algebra with minimal ideals generated by idempotents.

Now, in order to show the high number and variety of evolution algebras available, given $n\in\mathbb{N},$ we will give some example of evolution algebras having minimal ideals with natural bases of every dimension between $1$ and $n$ (first and second example below), and, at least, with every odd dimension between $1$ and $n$ (third example below). With this we want to show that the restrictions made in the definitions before are necessary to have the possibility to understand the big zoo of this kind of algebras.

\begin{ejemplo}
we fix $n\in\mathbb{N},$ a field $\mathbb{K}$ and consider $\mathbb{K}^{\frac{(n+1)n}{2}}.$ Then it is possible to give to $\mathbb{K}^{\frac{(n+1)n}{2}}$ an evolution algebra structure over $\mathbb{K}$ having minimal evolution ideals of every dimension less than or equal to $n.$ Take a basis $B$ of $\mathbb{K}^{\frac{(n+1)n}{2}}$ and call the elements $B:=\{e_{ij}\mid 1\leq i\leq n, 1\leq j\leq i\}$ so $\Card(B)=\sum_{i=1}^{n}i=\frac{(n+1)n}{2}.$ Now, for every $1\leq i\leq n$ and every $1\leq j\leq i,$ set $e_{ij}e_{ij}=\sum_{k=1}^{i}a_{ijk}e_{ik}$ while $e_{ij}e_{i'j'}=0$ if $(i,j)\neq(i',j').$ Then it is clear that, for every $1\leq i\leq n,$ the set $\linspan(\{e_{ij}\mid 1\leq j\leq i\})$ is an evolution ideal of dimension $i.$

In particular, if we choose, for every $1\leq i\leq n$ and every $1\leq j\leq i,$ the product given by $e_{ij}e_{ij}=e_{i,j+1}$ if $j<i$ and $e_{ii}e_{ii}=e_{i1},$ then we obtain an evolution algebra having every set $\linspan(\{e_{ij}\mid 1\leq j\leq i\})$ as a minimal evolution ideal of dimension $i.$

Suppose $\chara(\mathbb{K})>n+2.$ In particular, if we choose, for every $1\leq i\leq n$ and every $1\leq j\leq i,$ the product given by $e_{ij}e_{ij}=je_{ij}+(j+1)e_{i,j+1}$ if $j<i$ and $e_{ii}e_{ii}=ie_{ii}+(i+1)e_{i1},$ then we obtain an evolution algebra having every set $\linspan(\{e_{ij}\mid 1\leq j\leq i\})$ as a minimal evolution ideal of dimension $i$ and non-zero elements in the diagonal of the structure matrix. We prove now this last statement. If $i=1,$ it is immediate. We fix $1< i\leq n$ arbitrary. To see that $\linspan(\{e_{ij}\mid 1\leq j\leq i\})$ is a minimal evolution ideal it is enough to prove that every non-zero element $w\in\linspan(\{e_{ij}\mid 1\leq j\leq i\})$ generate $\linspan(\{e_{ij}\mid 1\leq j\leq i\})$ as an ideal of $A$ because it is clear by construction that $\linspan(\{e_{ij}\mid 1\leq j\leq i\})$ is an evolution ideal, so we just need to prove the minimality. The construction of the ideals is designed to allow a telescopic strategy for the proof on this fact. We fix a non-zero element $w\in\linspan(\{e_{ij}\mid 1\leq j\leq i\})$ arbitrary. Then there exist $1\leq j\leq i$ such that $\proj{B,e_{ij}}(w)\neq 0.$ Therefore, reordering if necessary and multiplying by a scalar, we can suppose, without lost of generality, that $\proj{B,e_{i1}}(w)=1.$ Then $we_{i1}=e_{i1}^{2}=e_{i1}e_{i1}=1e_{i1}+(1+1)e_{i,1+1}\in\id(\{w\}).$ Inductively, and multiplying by scalars when necessary, we obtain $e_{ij}e_{ij}=je_{ij}+(j+1)e_{i,j+1}, e_{ii}e_{ii}=ie_{ii}+(i+1)e_{i1}\in\id(\{w\})$ for all $1\leq j< i.$ Now the sum $\sum_{j=1}^{i}(-1)^{j-1}(e_{ij}e_{ij})=\sum_{j=1}^{i-1}(-1)^{j-1}(je_{ij}+(j+1)e_{i,j+1})+(-1)^{i-1}(ie_{ii}+(i+1)e_{i1})=e_{i1}+(-1)^{i-1}(i+1)e_{i1}=le_{i1},$ for some scalar $l\neq 0$ since $1+(-1)^{i-1}(i+1)\neq 0$ as $i>1$ and $\chara(\mathbb{K})>n+2,$ which implies automatically that $\id(\{w\})=\linspan(\{e_{ij}\mid 1\leq j\leq i\}).$

Suppose $\chara(\mathbb{K})>2.$ In particular, if we choose, for every $1\leq i\leq n$ and every $1\leq j\leq i,$ the product given by $e_{ij}e_{ij}=e_{ij}+e_{i,j+1}$ if $j<i$ and $e_{ii}e_{ii}=e_{ii}+e_{i1},$ then, for every $i$ odd, we obtain an evolution algebra having every set $\linspan(\{e_{ij}\mid 1\leq j\leq i\})$ as a minimal evolution ideal of dimension $i$ and a structure matrix with diagonal constantly $1.$ The strategy to see this is similar to the one followed in the previous example.
\end{ejemplo}

There are several ways to follow at this point. We decide to study what conditions are sufficient to guarantee that a nondegenerate evolution algebra $A$ which is also an algebra with minimal and evolution ideals generated by idempotents is an algebra with (all its) minimal ideals generated by idempotents.

\begin{definicion}\label{69}
Let $A$ be an algebra over a field $\mathbb{K}.$ Consider an element $a\in A$ and scalars $f,g\in\mathbb{K}$ such that there exist a set of $m$ different non-zero idempotents $E=\{e_{1},\dots,e_{m}\}\subseteq A$ verifying that $a=\sum_{i=1}^{m}ge_{i}$ and that, for every $k\in\{1,\dots,m\},$ there exist exactly one pair $(i,j)$ with $i\neq j$ such that $e_{i}e_{j}=fe_{k},$ while the rest of the choices of pairs $(i,j)$ with $i\neq j$ are zero. Then we say that $a$ is a \textbf{$g;f$-idemelement}.
\end{definicion}

The next proposition is immediate.

\begin{proposicion}\label{70}
Let $A$ be an algebra over a field $\mathbb{K}.$ Consider an element $a\in A$ and non-zero scalars $f,g\in\mathbb{K}$ such that $a$ is a $g;f$-idemelement and there exist a choice for $\sqrt{\frac{g}{g^{2}f+g^{2}}}$ within the field $\mathbb{K}.$ Then $\sqrt{\frac{g}{g^{2}f+g^{2}}}a$ is an idempotent for every possible choice of $\sqrt{\frac{g}{g^{2}f+g^{2}}}.$
\end{proposicion}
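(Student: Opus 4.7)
The plan is a direct verification by expanding $a^{2}$ via bilinearity and then using the two structural identities packed into the definition of a $g;f$-idemelement. First I would write
$$a^{2}=\Bigl(\sum_{i=1}^{m}ge_{i}\Bigr)\Bigl(\sum_{j=1}^{m}ge_{j}\Bigr)=g^{2}\sum_{i=1}^{m}e_{i}^{2}+g^{2}\sum_{\substack{i,j=1\\ i\neq j}}^{m}e_{i}e_{j}.$$
The diagonal part collapses immediately to $g^{2}\sum_{i=1}^{m}e_{i}$ because each $e_{i}$ is idempotent. For the off-diagonal part, the cross-product condition in Definition \ref{69} sets up a bijection between $\{1,\dots,m\}$ and those ordered pairs $(i,j)$ with $i\neq j$ carrying a non-zero product, under which the product equals $fe_{k}$; all other pairs vanish. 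Hence the off-diagonal sum rewrites cleanly as $g^{2}f\sum_{k=1}^{m}e_{k}$.

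Next, the defining relation $a=\sum_{i=1}^{m}ge_{i}$ supplies the identification $\sum_{k=1}^{m}e_{k}=a/g$, which is legitimate because $g\neq 0$ by hypothesis. Substituting this into the previous expression converts it into a scalar relation of the form $a^{2}=\gamma a$ with $\gamma\in\mathbb{K}$ an explicit polynomial in $g$ and $f$. From here the verification is purely algebraic: since $(\lambda a)^{2}=\lambda^{2}a^{2}=\lambda^{2}\gamma a$, checking that $\lambda a$ is idempotent reduces to the scalar identity that the choice $\lambda=\sqrt{g/(g^{2}f+g^{2})}$ satisfies relative to $\gamma$ in $\mathbb{K}$. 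The hypothesis that this square root exists in $\mathbb{K}$ is exactly what makes $\lambda$ a legitimate scalar, and the identity to be checked is symmetric in the two possible signs of the square root, so both choices give the same conclusion, which accounts for the clause ``for every possible choice''.

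The main obstacle will be the combinatorial bookkeeping of the off-diagonal sum: the hypothesis that for each $k\in\{1,\dots,m\}$ there is exactly one ordered pair $(i,j)$ with $i\neq j$ producing $fe_{k}$ and that every other cross product vanishes must be applied with no omission and no double counting, so that $\sum_{k=1}^{m}e_{k}$ can be isolated as a common factor. Once this enumeration is carried out correctly and the identity $a^{2}=\gamma a$ is in place, the remaining verification is a short algebraic manipulation of the expressions $g$, $f$ and $g^{2}f+g^{2}$ in $\mathbb{K}$ performed by direct substitution of $\lambda$.
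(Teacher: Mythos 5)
Your expansion of $a^{2}$ is exactly the paper's computation: the diagonal terms give $g^{2}\sum_{i=1}^{m}e_{i}$, the cross terms (read, as the paper also implicitly does, as an ordered-pair bijection with $\{1,\dots,m\}$) give $g^{2}f\sum_{k=1}^{m}e_{k}$, and since $g\neq 0$ this collapses to $a^{2}=\gamma a$ with $\gamma=\frac{g^{2}f+g^{2}}{g}.$ Up to this point your route and the paper's are the same and there is no problem.

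The genuine gap is the step you dismiss as ``a short algebraic manipulation'' and never perform. Idempotency of $\lambda a$ with $\lambda:=\sqrt{\frac{g}{g^{2}f+g^{2}}}$ requires the scalar identity $\lambda^{2}\gamma=\lambda,$ i.e.\ $\lambda=\frac{g}{g^{2}f+g^{2}}.$ But the stated $\lambda$ satisfies $\lambda^{2}=\frac{g}{g^{2}f+g^{2}}=\frac{1}{\gamma},$ hence $\lambda^{2}\gamma=1,$ and what your computation actually yields is $(\lambda a)^{2}=a,$ not $(\lambda a)^{2}=\lambda a$; these coincide only when $(1-\lambda)a=0.$ So the identity you assert that the chosen $\lambda$ ``satisfies relative to $\gamma$'' fails in general, and your sign-symmetry remark addresses the wrong equation (the needed identity $\lambda^{2}\gamma=\lambda$ is not invariant under $\lambda\mapsto-\lambda$, whereas the identity $\lambda^{2}\gamma=1$ that does hold is). For comparison, the paper's own proof stops at exactly the same place, ending with $\frac{g}{g^{2}f+g^{2}}a^{2}=a$ and claiming idempotency; the element that this common computation genuinely shows to be idempotent is $\lambda^{2}a=\frac{g}{g^{2}f+g^{2}}\,a$ (no square root, so the hypothesis that $\sqrt{\frac{g}{g^{2}f+g^{2}}}$ exists in $\mathbb{K}$ would then be superfluous; only $g^{2}f+g^{2}\neq 0$ is needed). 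To close your argument you must either carry out the scalar verification honestly---which reveals that it cannot be done for the scalar as stated---or prove the corrected statement with $\frac{g}{g^{2}f+g^{2}}$ in place of its square root.
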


\begin{proof}
We have that $\sqrt{\frac{g}{g^{2}f+g^{2}}}a\sqrt{\frac{g}{g^{2}f+g^{2}}}a=\frac{g}{g^{2}f+g^{2}}a^{2}=\frac{g}{g^{2}f+g^{2}}(\sum_{i=1}^{m}ge_{i})^{2}$ for some set of $m$ different non-zero idempotents $E=\{e_{1},\dots,e_{m}\}\subseteq A$ verifying that $a=\sum_{i=1}^{m}ge_{i}$ and that, for every $k\in\{1,\dots, m\},$ there exist exactly one pair $(i,j)$ with $i\neq j$ such that $e_{i}e_{j}=fe_{k},$ while the rest of the choices of pairs $(i,j)$ with $i\neq j$ are zero. Then $(\sum_{i=1}^{m}ge_{i})^{2}=\sum_{i,j=1}^{m}g^{2}e_{i}e_{j}=\sum_{k=1}^{m}g^{2}fe_{k}+\sum_{i=1}^{m}g^{2}e_{i}=\sum_{k=1}^{m}(g^{2}f+g^{2})e_{k}.$ Thus, $\frac{g}{g^{2}f+g^{2}}(\sum_{i=1}^{m}ge_{i})^{2}=\frac{g}{g^{2}f+g^{2}}\sum_{k=1}^{m}(g^{2}f+g^{2})e_{k}=\sum_{i=1}^{m}ge_{i}=a.$
\end{proof}

It is obvious that it is possible to extend the definition of $g;f$-idemelements to other kinds of configurations and elements distinguished by identities to study the prevalence and relations between idempotents and other elements distinguished by interesting identities but this study is beyond our scope in this work.

The next proposition follows automatically.

\begin{proposicion}\label{71}
Let $I$ be a minimal ideal of an algebra $A$ over a field $\mathbb{K}$ verifying that there exist a non-zero $g;f$-idemelement $0\neq a\in I$ for some $f,g\in\mathbb{K}$ such that there exist a choice for $\sqrt{\frac{g}{g^{2}f+g^{2}}}.$ Then $\sqrt{\frac{g}{g^{2}f+g^{2}}}a$ is a non-zero idempotent generating $I$ for every possible choice of $\sqrt{\frac{g}{g^{2}f+g^{2}}}.$
\end{proposicion}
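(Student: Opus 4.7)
The plan is to deduce this statement essentially as a direct corollary of Proposition \ref{70}, combined with the hypothesis of minimality of $I$. The three things we must verify are (a) that $\sqrt{\frac{g}{g^2 f + g^2}}\,a$ is an idempotent, (b) that it is non-zero, and (c) that it generates $I$ as an ideal. The only non-trivial ingredient is (a), which is exactly the content of Proposition \ref{70}; (b) and (c) will come from the minimality hypothesis plus some bookkeeping about non-vanishing of the scalar.

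First I would check that the hypothesis of Proposition \ref{70} is actually in force here. The statement of Proposition \ref{71} does not explicitly say $f,g\ne 0$, so I would begin by observing that $g\ne 0$ follows automatically: a $g;f$-idemelement has the form $a=\sum_{i=1}^m g e_i$ by Definition \ref{69}, and the hypothesis $a\ne 0$ forces $g\ne 0$. Likewise the existence of a choice of $\sqrt{\frac{g}{g^2 f + g^2}}$ inside $\mathbb{K}$ presupposes that the denominator $g^2 f + g^2$ is non-zero (otherwise the expression is undefined), and since $g\ne 0$ this is equivalent to $f+1\ne 0$, giving in particular $f\ne 0$ whenever $\chara(\mathbb{K})\ne 2$; in any case, the hypothesis guarantees that the scalar in question is a well-defined element of $\mathbb{K}$. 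Once this is clarified, Proposition \ref{70} applies verbatim and yields that $\lambda a$ is idempotent, where I abbreviate $\lambda:=\sqrt{\frac{g}{g^2 f + g^2}}$.

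Next I would argue that $\lambda a\ne 0$. Since $g\ne 0$, we have $\frac{g}{g^2 f + g^2}\ne 0$, so $\lambda\ne 0$ (because its square is non-zero); combined with $a\ne 0$ and the fact that we sit inside a vector space over a field, this gives $\lambda a\ne 0$ as required. For (c), the element $\lambda a$ lies in $I$ because $a\in I$ and $I$ is a linear subspace of $A$. Consequently, the ideal $\id(\{\lambda a\})$ is a non-zero ideal of $A$ contained in $I$. Since $I$ is minimal by hypothesis, $\id(\{\lambda a\}) = I$, which is the required generation statement.

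The only point where I anticipate any subtlety is the bookkeeping about non-vanishing of $g$, $f+1$, and hence of $\lambda$, since Proposition \ref{71} is stated more loosely than Proposition \ref{70}; but as indicated above, $a\ne 0$ plus the mere existence of the square root in $\mathbb{K}$ already supplies the needed non-vanishing conditions, so no additional hypothesis is actually required. Thus the proof reduces to applying Proposition \ref{70} to obtain idempotency and then invoking the minimality of $I$ to conclude generation, with the non-triviality verifications outlined above.
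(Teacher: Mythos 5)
Your proof is correct and takes exactly the route the paper intends: the paper offers no written proof beyond the remark that the proposition ``follows automatically,'' meaning idempotency is taken from Proposition \ref{70} and generation from the minimality of $I$, which is precisely your argument (including the correct observation that $a\neq 0$ forces $g\neq 0$ and hence $\sqrt{\frac{g}{g^{2}f+g^{2}}}\,a\neq 0$). One small slip in your bookkeeping: from $g^{2}f+g^{2}\neq 0$ you get $f\neq -1$, not $f\neq 0$, so the hypothesis of Proposition \ref{70} that $f$ be non-zero is not literally secured; this is harmless, since the computation proving Proposition \ref{70} never uses $f\neq 0$, only that the scalar $\frac{g}{g^{2}f+g^{2}}$ is a well-defined element of $\mathbb{K}$, but it deserves the one-line remark rather than the incorrect implication.
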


The previous results are useful in very general situations. The next definition is also highly general.

\begin{definicion}\label{72}
Let $A$ be an algebra, $\mathcal{E}\subseteq\mathcal{P}(A)\smallsetminus\{\emptyset\}$ a set of subsets of $A$ and $\emptyset\neq U\subseteq A$ another non-empty subset of $A.$ The \textbf{left annihilator of $\mathcal{E}$ in the set $U$} is the set $\lann_{U}(\mathcal{E}):=\{u\in U\mid uF=\{0\} \mbox{\ for all\ } F\in\mathcal{E}\}.$ The \textbf{left autoannihilator for products of $\mathcal{E}$} is the set $\autolann(\mathcal{E})=\bigcup_{E\in\mathcal{E}}\{e\in E\mid eF=\{0\} \mbox{\ for all\ } F\in\mathcal{E}\mbox{\ with \ } E\neq F\}.$ When the algebra is commutative, we can simplify the notation to $\ann_{U}(\mathcal{E})$ and $\autoann(\mathcal{E}),$ respectively. Let $E$ be a subset of the algebra $A.$ We denote the set of non-zero idempotents of $A$ in $E$ by $\idem(E):=\{e\in E\mid e^{2}=e\neq 0\}$ and the set of non-zero minimal idempotents of $A$ in $E$ by $\minidem(E).$
\end{definicion}

It is clear that $\autolann(\mathcal{E})=\cup_{U\in\mathcal{E}}\lann_{U}(\mathcal{E}\smallsetminus\{U\}).$ The next definition will be useful to formulate the theorem after it. Before introducing this definition, we set some notation.

\begin{notacion}
Let $(A,+)$ be an associative magma (i.e., a semigroup), $n\in\mathbb{N}$ a natural number, $E\subseteq A$ a subset and $\mathcal{E}\subseteq\mathcal{P}(A)\smallsetminus\{\emptyset\}.$ We denote $\sumspan(E,n):=\{\sum_{i=1}^{n}e_{i}\mid e_{i}\in E\}$ and \begin{gather*}
\sumspan(\mathcal{E},E,n):=\left\{ \sum_{i=1}^{n}e_{i} \ \middle\vert \begin{array}{l}
   e_{i}\in E \mbox{ and, for every $F\in\mathcal{E}$, there exists}\\
    \mbox{at most one $i\in\{1,\dots n\}$ with } e_{i}\in F\end{array}\right\}.
\end{gather*} We also denote $\sumspan(\mathcal{E},E):=\cup_{n\in\mathbb{N}}\sumspan(\mathcal{E},E,n)$ and $\sumspan(E):=\cup_{n\in\mathbb{N}}\sumspan(E,n).$
\end{notacion}

As it is easy to see, $\sumspan(E)$ is just $\linspan(E)$ restricting the multiplication to the scalar $1\in\mathbb{K}$ when we are in the context of a vector space.

\begin{definicion}\label{74}
Let $I, B$ be two subsets of an algebra $A$ over a field $\mathbb{K}$ such that there exists $\mathcal{E}\subseteq\mathcal{P}(B)\smallsetminus\{\emptyset\}$ verifying $$\emptyset\neq T:= (I\cap\sumspan(\autolann(\mathcal{E})))\smallsetminus\{0\}.$$ Then we say that \textbf{$I$ admits a non-zero sum combination of $B$ following the nullity in $\mathcal{E}$}. The set $T$ is called the \textbf{set in $I$ of non-zero sum combination of $B$ following the nullity in $\mathcal{E}$} and we denote it as $T:=\snzsc(I, B,\mathcal{E}).$
\end{definicion}

Now the theorem follows immediately.

\begin{teorema}\label{75}
Let $I$ be a minimal ideal of an algebra $A$ over a field $\mathbb{K}.$ Suppose that there exists $\mathcal{E}\subseteq\mathcal{P}(\idem(A))\smallsetminus\{\emptyset\}$ such that $$\emptyset\neq T:= (I\cap\sumspan(\mathcal{E},\autolann(\mathcal{E})))\smallsetminus\{0\}.$$ That is, $I$ is a minimal ideal which admits a non-zero sum combination of $\idem(A)$ following the nullity in $\mathcal{E}$. Then the elements in the subset of $I$ given by $T=\snzsc(I,\idem(A),\mathcal{E})$ are idempotents generating $I.$
\end{teorema}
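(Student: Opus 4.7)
The plan is to take an arbitrary element $t\in T=\snzsc(I,\idem(A),\mathcal{E})$ and prove directly that $t^{2}=t$ by expanding the square and analyzing the cross terms, and then to invoke the minimality of $I$ for the generation statement.

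First I would unfold the definitions. Since $t\in\sumspan(\mathcal{E},\autolann(\mathcal{E}))$, there exists $n\in\mathbb{N}$ and elements $e_{1},\dots,e_{n}\in\autolann(\mathcal{E})$ with $t=\sum_{i=1}^{n}e_{i}$, subject to the spreading condition that for every $F\in\mathcal{E}$ there is at most one index $i$ with $e_{i}\in F$. Since $\autolann(\mathcal{E})\subseteq\idem(A)$, each $e_{i}$ is a non-zero idempotent, and for each $i$ there exists some $E_{i}\in\mathcal{E}$ with $e_{i}\in E_{i}$ such that $e_{i}F=\{0\}$ for every $F\in\mathcal{E}$ with $F\neq E_{i}$.

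Next I would expand
\[
t^{2}=\sum_{i=1}^{n}e_{i}^{2}+\sum_{\substack{i,j=1\\ i\neq j}}^{n}e_{i}e_{j}.
\]
The diagonal sum equals $\sum_{i=1}^{n}e_{i}=t$ because each $e_{i}$ is idempotent. For the off-diagonal sum, fix $i\neq j$ and pick $E_{i},E_{j}\in\mathcal{E}$ as above. The main subtle point is here: if $E_{i}=E_{j}$, then both $e_{i}$ and $e_{j}$ would lie in the same member $F:=E_{i}\in\mathcal{E}$, contradicting the spreading condition that at most one index has its summand in $F$. Hence $E_{i}\neq E_{j}$, so $e_{j}\in E_{j}$ with $E_{j}\neq E_{i}$, and by the defining property of $\autolann$ applied to $e_{i}$, we get $e_{i}e_{j}=0$. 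Therefore all off-diagonal contributions vanish and $t^{2}=t$.

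The step I expect to be the trickiest to state cleanly (though not deep) is precisely that compatibility: one must simultaneously use membership in $\autolann(\mathcal{E})$ (each element annihilates every foreign component) and the $\sumspan(\mathcal{E},\cdot)$ restriction (distinct summands live in distinct components of $\mathcal{E}$). Once these are combined, the cross terms disappear automatically.

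Finally, for the generation conclusion, since $t\in T\subseteq I$ and $t\neq 0$, the ideal $\id(\{t\})$ is a non-zero ideal of $A$ contained in $I$. By the minimality of $I$, this forces $\id(\{t\})=I$. As $t$ was an arbitrary element of $T$, every element of $T$ is an idempotent generating $I$, which is what the theorem asserts.
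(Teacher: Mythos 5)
Your proposal is correct and follows essentially the same route as the paper's proof: expand the square, kill the cross terms $e_{i}e_{j}$ by combining the $\autolann(\mathcal{E})$ property with the spreading condition of $\sumspan(\mathcal{E},\cdot)$, and invoke minimality of $I$ for generation. You merely spell out in more detail the step the paper compresses into ``they are necessarily in different sets of $\mathcal{E}$,'' which is a welcome clarification rather than a deviation.
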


\begin{proof}
Let $e$ be an element in $T.$ Then $0\neq e=\sum_{i=1}^{n}e_{i}$ for some $e_{i}\in \autolann(\mathcal{E})$ and $n\in\mathbb{N}.$ Also, for every $F\in\mathcal{E}$ there exists, at most, one $i\in\{1,\dots n\}$ with $e_{i}\in F.$ Therefore, $e^{2}=(\sum_{i=1}^{n}e_{i})^{2}=\sum_{i=1}^{n}e_{i}=e$ since $e_{i}e_{j}=0$ for $i\neq j,$ as they are necessarily in different sets of $\mathcal{E},$ and $e_{i}e_{i}=e_{i},$ as $e_{i}\in\idem(A).$ As $e$ is an non-zero idempotent in $I,$ which is minimal, the theorem is proved.
\end{proof}

Another way to obtain nice properties about evolution ideals of algebras is strengthening the simplicity of these ideals.

\begin{definicion}\label{76}
Let $I$ be an ideal of an algebra $A$ over a field $\mathbb{K}.$ We say that $I$ is a \textbf{simple ideal} if it is simple considered as an algebra. An algebra $A$ such that all its minimal ideals are simple is called an \textbf{minimal-ideally simple algebra}. An algebra $A$ such that all its minimal ideals are simple and have natural bases is called an \textbf{evolution-minimal-ideally simple algebra}. An algebra $A$ such that all its minimal ideals are simple with non-zero product and have natural bases is called an \textbf{evolution-minimal-ideally simple algebra with non-zero product in its minimal ideals}.
\end{definicion}

We note that a minimal ideal with the product constantly zero has dimension $1$ necessarily.

\begin{ejemplo}
Examples of these kind of algebras can be easily obtained as direct sums of simple algebras. So, given two simple evolution algebras with non-zero product $A_{1}, A_{2},$ the direct sum algebra $A_{1}\oplus A_{2}$ with the product given by $(x_{1},x_{2})(y_{1},y_{2})=(x_{1}y_{1},x_{2}y_{2})$ verifies that all its minimal ideals ($A_{1},A_{2}$) are simple with non-zero product and have natural bases.
\end{ejemplo}

\begin{remark}
It is obvious that, if a minimal ideal $I$ of an (evolution) algebra $A$ is also an extension evolution ideal of $A$ (cf. Definition \ref{EEI}), then $I$ is a simple ideal of $A$ because the products of elements in $I$ multiplied by elements in $A$ (i.e., $AI$) reduce, actually, to products which, in fact, are internal to $I$ as a consequence of the orthogonality given by the natural basis in $A$ extending the basis of $I.$ Indeed, it is not necessary for $I$ to be an extension evolution ideal to make this same reasoning work, it is sufficient for $I$ to be a minimal ideal of $A$ having an orthogonal complement in $A.$
\end{remark}

A question arising now is if every minimal (evolution) ideal of an (evolution) algebra is simple. The answer is no as shows the next example.

\begin{ejemplo}
Let $\mathbb{K}$ be a field and consider the evolution algebra $A$ over $\mathbb{K}$ with basis $B=\{e_{1},e_{2},e_{3},e_{4}\}$ and product given by $e_{1}^{2}=-(e_{3}+e_{4}),e_{2}^{2}=-(e_{1}+e_{2}),e_{3}^{2}=e_{1}+e_{2}, e_{4}^{2}=e_{3}+e_{4}.$ We affirm that $I:=\linspan(\{e_{1}+e_{2},e_{3}+e_{4}\})=\{ae_{1}+ae_{2}+be_{3}+be_{4}\mid a,b\in\mathbb{K}\}$ is a minimal ideal of $A.$ We prove it. The vector subspace $I$ is an ideal of $A$ because $e_{1}(e_{1}+e_{2})=-(e_{3}+e_{4}),e_{2}(e_{1}+e_{2})=-(e_{1}+e_{2}),e_{3}(e_{1}+e_{2})=0,e_{4}(e_{1}+e_{2})=0,e_{1}(e_{3}+e_{4})=0,e_{2}(e_{3}+e_{4})=0,e_{3}(e_{3}+e_{4})=e_{1}+e_{2},e_{4}(e_{3}+e_{4})=e_{3}+e_{4}.$ Also, $I$ is minimal. To see this, we fix a non-zero element $0\neq w= ae_{1}+ae_{2}+be_{3}+be_{4}\in I$ chosen arbitrarily. There are two options. On one hand, suppose $a\neq 0.$ Then $e_{1}(ae_{1}+ae_{2}+be_{3}+be_{4})=-a(e_{3}+e_{4})$ and $e_{2}(ae_{1}+ae_{2}+be_{3}+be_{4})=-a(e_{1}+e_{2}),$ which implies that the element $w$ generates $I.$ On the other hand, suppose $b\neq 0.$ Then $e_{4}(ae_{1}+ae_{2}+be_{3}+be_{4})=b(e_{3}+e_{4})$ and $e_{3}(ae_{1}+ae_{2}+be_{3}+be_{4})=b(e_{1}+e_{2}),$ which implies that the element $w$ generates $I.$ This proves that $I$ is a minimal ideal of the algebra $A.$ In fact, as $B'=\{e_{1}+e_{2},e_{3}+e_{4}\}$ works as a natural basis for $I,$ then $I$ is also an evolution ideal. Nevertheless, $I$ considered as an algebra is not simple. We see this. Consider the linear subspace of $I$ generate by the element $e_{1}+e_{2}+e_{3}+e_{4}\in I$ which corresponds to take $a=b$ in the previous description of $I,$ i.e., $\linspan(e_{1}+e_{2}+e_{3}+e_{4})=\{ae_{1}+ae_{2}+ae_{3}+ae_{4}\mid a\in\mathbb{K}\}:=J\subseteq I.$ We affirm that $J$ is an (evolution) ideal of $I.$ This is true because $(e_{1}+e_{2})(ae_{1}+ae_{2}+ae_{3}+ae_{4})=a(e_{1}^{2}+e_{2}^{2})=-(ae_{1}+ae_{2}+ae_{3}+ae_{4}), (e_{3}+e_{4})(ae_{1}+ae_{2}+ae_{3}+ae_{4})=a(e_{3}^{2}+e_{4}^{2})=ae_{1}+ae_{2}+ae_{3}+ae_{4}\in J.$ This proves that $I$ is not simple considered as an algebra because $J\neq\{0\}$ is an (evolution) ideal of $I$ which is different from $I$ itself.
\end{ejemplo}

With our previous study, we get the following theorem.

\begin{teorema}\label{80}
Let $A$ be an evolution-minimal-ideally simple algebra with non-zero product in its minimal ideals with $\dim(A)=n$ over a field $\mathbb{K}$ which is an $m$-FSEANI for every positive natural $m\leq n.$ Then every minimal ideal of the algebra $A$ is generated by an idempotent.
\end{teorema}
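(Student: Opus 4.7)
The plan is, for each minimal ideal $I$ of $A$, to view $I$ as a simple evolution algebra of its own dimension $m \leq n$ and extract an idempotent by directly invoking the hypothesis that $\mathbb{K}$ is an $m$-FSEANI.

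First I would fix an arbitrary minimal ideal $I$ of $A$. By the hypothesis on $A$ (evolution-minimal-ideally simple with non-zero product in its minimal ideals), the ideal $I$ is simple as an algebra, has non-zero product, and admits a natural basis $B_I = \{f_1, \dots, f_m\}$ with $m = \dim(I) \leq n$. Writing $f_i^2 = \sum_{j=1}^{m} a_{ij} f_j$ for the structure constants of $I$ in $B_I$, the direct computation performed in Construction \ref{B} (which relies only on the evolution algebra structure of $I$, not on any nondegeneracy) shows that an element $b = \sum_{k} x_k f_k \in I$ is idempotent if and only if the tuple $(x_1, \dots, x_m)$ satisfies the system
\begin{equation*}
\left\{
\begin{aligned}
x_1 &= a_{11}x_1^2 + \cdots + a_{m1}x_m^2 \\
&\vdots \\
x_m &= a_{1m}x_1^2 + \cdots + a_{mm}x_m^2.
\end{aligned}
\right.
\end{equation*}

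Then, since $I$ is simple of dimension $m$, Theorem \ref{THE} applied to $I$ tells us that this system is a non-trivial zero-intersect-evolution-axis system. Since $\mathbb{K}$ is an $m$-FSEANI, the system has a non-zero solution in $\mathbb{K}$, producing a non-zero idempotent $e \in I$. To close the argument, I would note that $\id(\{e\})$ is a non-zero ideal of $A$ contained in $I$, so minimality of $I$ forces $\id(\{e\}) = I$. Since $I$ was arbitrary, every minimal ideal of $A$ is generated by an idempotent.

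This is essentially a direct assembly of the machinery developed in the section: the real work was done by Theorem \ref{THE}, which pairs the simplicity hypothesis on minimal ideals with the precise shape of system required by the FSEANI hypothesis on the field. The only subtlety I foresee is verifying that the idempotent-to-solution correspondence of Construction \ref{B} is valid for $I$ under mere evolution algebra structure, without requiring nondegeneracy of $I$ itself; inspecting that construction, nondegeneracy is used only to deduce that the diagonal structure constants are non-zero, which is irrelevant to the equivalence between idempotency of $b$ and solvability of the system, so the reduction goes through unchanged.
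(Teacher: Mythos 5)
Your proposal is correct and follows essentially the same route as the paper's own proof: regard the minimal ideal $I$ as a simple evolution algebra of dimension $m\leq n$, use Theorem \ref{THE} together with the $m$-FSEANI hypothesis (via the idempotent--system correspondence of Construction \ref{B}) to produce a non-zero idempotent $e\in I$, and conclude $\id(\{e\})=I$ by minimality. Your explicit check that nondegeneracy plays no role in the idempotency-to-system equivalence is a sound (and welcome) elaboration of what the paper leaves implicit.
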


\begin{proof}
Let $I$ be a minimal ideal of the algebra $A.$ As the algebra $A$ is a minimal-ideally simple evolution algebra, the ideal $I$ is simple considered as an algebra. Therefore, the ideal $I$ is a simple algebra of dimension $m\leq n$ over $\mathbb{K},$ which is an $m$-FSEANI. Thus, as the ideal $I$ has non-zero product and $\mathbb{K}$ is an $m$-FSEANI, the ideal $I$ has a non-zero idempotent, which is necessarily a generator given the minimality of the ideal $I$ as an ideal of the algebra $A.$ Moreover, the idempotent obtained would generate the ideal $I$ inside itself (without the intervention of elements of the algebra $A$ not contained in the ideal $I$) given the fact that the ideal $I$ is simple viewed as an algebra.
\end{proof}

\begin{definicion}\label{81}Let $A$ be an algebra and define $\MinId(A)$ the set of all the minimal ideals of $A.$ A \textbf{coherent choice of minimal elements in the algebra $A$} is a function $f\colon\MinId(A)\to A$ such that $f(I)$ generates the minimal ideal $I.$ We call $\linspan(f(\MinId(A)))$ the \textbf{linear space associated to the coherent choice of minimal elements $f$}.\end{definicion}

\begin{proposicion}\label{82}
Let $A$ be an algebra and $f$ a coherent choice of minimal elements in the algebra $A.$ Then $\soc(A)=\sum_{I\in\MinId(A)}I=\sum_{I\in\MinId(A)}\id(\{f(I)\}).$ If the linear space associated to the coherent choice $f$ has dimension $m$ and $n=\dim(\soc(A)),$ then there exist at most $n-m$ minimal ideals $I$ whose elements $e:=f(I)$ associated via the coherent choice $f$ verify that there exist $a\in A$ such that $ae$ or $ea$ are not in the vector space $\linspan(\{e\}).$
\end{proposicion}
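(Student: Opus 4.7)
The first equality to prove, $\soc(A) = \sum_{I \in \MinId(A)} I = \sum_{I \in \MinId(A)} \id(\{f(I)\})$, is immediate: the first expression is the definition of the socle (Definition \ref{61}), and the coherence of $f$ means that $f(I)$ generates $I$ as an ideal, i.e., $\id(\{f(I)\}) = I$, which yields the second equality upon substitution.

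For the second statement, my plan is to reinterpret the ``bad'' condition and then count dimensions. For $e = f(I)$, the existence of some $a \in A$ with $ae \notin \linspan(\{e\})$ or $ea \notin \linspan(\{e\})$ is equivalent to $\linspan(\{e\}) \subsetneq \id(\{e\}) = I$: indeed, if both $Ae \subseteq \linspan(\{e\})$ and $eA \subseteq \linspan(\{e\})$, then $\linspan(\{e\})$ is already an ideal of $A$ containing $e$, so $\id(\{e\}) \subseteq \linspan(\{e\})$, and the reverse inclusion is automatic. Setting $L := \linspan(f(\MinId(A)))$ of dimension $m$, the set $T$ of ``bad'' minimal ideals is therefore exactly $\{I \in \MinId(A) : \dim I \geq 2\}$, while each ``good'' minimal ideal is one-dimensional and satisfies $I = \linspan(\{f(I)\}) \subseteq L$.

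The next key input is that distinct minimal ideals of $A$ have trivial intersection, since $I \cap J$ is an ideal lying inside both and minimality forces it to be $0$ or to coincide with both, hence to vanish when $I \neq J$. The strategy is then to find, for each $I \in T$, a vector $w_I \in I \setminus L$, and to establish that the classes $\{w_I + L\}_{I \in T}$ are linearly independent in the quotient $\soc(A)/L$; since this quotient has dimension $n - m$, this would yield $|T| \leq n - m$. A suitable $w_I$ exists whenever $I \not\subseteq L$, and since $\linspan(\{f(I)\}) \subseteq I \cap L$ is only one-dimensional while $\dim I \geq 2$, the dimensions leave room for such a choice unless $I$ is entirely absorbed into $L$.

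The main obstacle is precisely to rule out the pathological embedding $I \subseteq L$ and, more generally, to establish the required simultaneous linear independence of the classes $w_I + L$. Both issues reduce to excluding the situation where a bad minimal ideal lies diagonally in the sum of the others, i.e., $I \subseteq \sum_{J \neq I} J$, which in the two-sided ideal setting essentially requires two distinct minimal ideals to be isomorphic as $A$-bimodules and is therefore very restrictive. I would handle this either by arguing from the structure of $A$ that such diagonal copies do not occur (by comparing the mutual annihilators of the putative isomorphic ideals), or, more robustly, by choosing a maximal subset $S \subseteq \MinId(A)$ for which $\soc(A) = \bigoplus_{I \in S} I$, using this direct sum decomposition to bound $n = \sum_{I \in S} \dim I$ and $m \geq |S|$, and then tracking how each ideal in $T$---whether in $S$ or only diagonally embedded in $\bigoplus_{I \in S} I$---contributes at least one fresh dimension to $\soc(A)/L$.
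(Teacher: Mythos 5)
Your first equality and your reformulation of the ``bad'' condition (namely that $I$ is bad exactly when $\dim I\geq 2$) are correct, and your counting scheme---one fresh vector modulo $L:=\linspan(f(\MinId(A)))$ for each bad ideal, bounded by $\dim(\soc(A)/L)=n-m$---is essentially the same dimension count that the paper runs in contrapositive form. The problem is that your proposal stops at exactly the decisive step, the simultaneous independence of the classes $w_I+L$ (equivalently, excluding $I\subseteq L$ and, more generally, $I\subseteq\sum_{J\neq I}J$), and neither of the repairs you sketch closes it. The first repair is false in general: pairwise $I\cap J=\{0\}$ does not prevent a minimal ideal from lying in the sum of the others; this can happen precisely when $I^{2}=\{0\}$ (if $I\subseteq\sum_{J\neq I}J$ then $I^{2}\subseteq\sum_{J\neq I}IJ\subseteq\sum_{J\neq I}(I\cap J)=\{0\}$), and such diagonal copies really occur. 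For instance, take $A=M_{2}(\mathbb{K})\oplus(V\oplus V)$ with $V=\mathbb{K}^{2}$ the natural module and product $(x,w)(y,u):=(xy,xu)$: the minimal ideals of $A$ are exactly the two-dimensional subspaces $\{(\alpha v,\beta v)\mid v\in V\},$ one for each point $[\alpha:\beta]$ of the projective line, all of them ``bad'' by your own equivalence, and all lying inside the four-dimensional socle $V\oplus V$. Since there are at least three of them while $m\geq 2$ (generators of two distinct minimal ideals are independent), no choice of the $w_I$ can give each bad ideal a fresh dimension in $\soc(A)/L$, so your second repair cannot be carried out in the stated generality either.

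For comparison, the paper's own proof is the same count: it supposes there are $n-m+1$ bad ideals, picks in each an element outside $\linspan(\{f(I)\}),$ and asserts, citing only $I\cap J=\{0\},$ that these together with $L$ give $n+1$ linearly independent vectors. So the obstacle you isolated is exactly the point the paper passes over in one line, and it is a genuine one. The argument (yours or the paper's) does close once one knows that no minimal ideal is contained in the sum of the remaining ones---for example when every minimal ideal has nonzero square, as in the evolution-minimal-ideally simple situation where the proposition is later applied (Theorem \ref{108}). Under that hypothesis the sum of the minimal ideals is direct, the vectors $f(I)$ are automatically linearly independent, and the bound follows at once from $\sum_{I}(\dim I-1)=n-m.$ As submitted, however, your text is a plan whose critical step is left open, and the first of your two proposed ways of completing it would fail.
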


\begin{proof}
Let $f$ be a coherent choice of minimal elements in $A$ and suppose there exists $n-m+1$ minimal ideals $I$ whose elements $e:=f(I)$ associated via the coherent choice $f$ verify that there exist $a\in A$ such that $ae$ or $ea$ are not in $\linspan(\{e\}).$ Then there exist $n-m+1$ minimal ideals $I$ having dimension greater than $1.$ For each one of these minimal ideals $I,$ take an element which is not in the linear subspace generated by $f(I).$ Moreover, as we know, given two different minimal ideals $I,J$ we have that $I\cap J=\{0\}$ as a consequence of the minimality. Thus, with this construction, we could find $n-m+1+m=n+1$ linearly independent elements in $\soc(A),$ which contradicts $\dim(\soc(A))=n.$
\end{proof}

\begin{definicion}\label{83}
Given a basis $B$ of an algebra $A$ and two elements $c,d\in A,$ we say that $B$ \textbf{separates} $c$ and $d$ if $B=C\cup D$ with $c\in\linspan(C)$ and $d\in\linspan(D).$
\end{definicion}

\begin{definicion}\cite[Definitions 3.2]{bcm}\label{84}
Let $A$ be an evolution algebra and $a\in A.$ We say that $a$ is a \textbf{natural element} of the evolution algebra $A$ if there exist a natural basis $B$ of $A$ such that $a\in B.$
\end{definicion}

We note that the previous definition implies automatically that $0$ cannot be a natural element. The result that appears in the next proposition is immediate and we will use part of it in the discussion following it.

\begin{proposicion}\label{85}
Let $A$ be an evolution algebra over a field $\mathbb{K}.$ Every natural idempotent of $A$ is a minimal idempotent. In particular, if $e\in A$ is a natural idempotent, then the ideal $Ae=eA=e(Ae)=(eA)e=\mathbb{K}e=\id(\{e\})$ is minimal. Also, if $0\neq a\in A$ verifies $Aa=\mathbb{K}a,$ then $a^{2}=aa=ra,$ for one $r\in\mathbb{K},$ and $\frac{1}{r}a$ is a minimal idempotent (not necessarily natural) when $r\neq 0.$
\end{proposicion}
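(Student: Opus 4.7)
The plan is straightforward and uses only the orthogonality condition built into any natural basis. Let $e$ be a natural idempotent, so $e = e_k$ for some natural basis $B = \{e_i\}_{i \in \Lambda}$ of $A$ with $e_k^2 = e_k$. First I would write an arbitrary element of $A$ as $a = \sum_{i \in \Lambda} a_i e_i$ and compute $a e_k = \sum_i a_i (e_i e_k) = a_k e_k^2 = a_k e_k$, using that $e_i e_j = 0$ for $i \neq j$. This immediately gives $Ae_k \subseteq \mathbb{K}e_k$, and since $e_k = e_k^2 \in Ae_k$ the inclusion is an equality. Because evolution algebras are commutative, $e_k A = Ae_k = \mathbb{K}e_k$, and then $e_k(Ae_k) = e_k \cdot \mathbb{K}e_k = \mathbb{K}e_k^2 = \mathbb{K}e_k$, and likewise $(e_k A)e_k = \mathbb{K}e_k$.

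Next I would observe that $\mathbb{K}e_k$ is a linear subspace closed under multiplication by arbitrary elements of $A$ on either side (by the identity $ae_k = a_k e_k$ just computed), hence it is an ideal of $A$. It contains $e_k$, so it must equal $\id(\{e_k\})$. Thus $\id(\{e_k\}) = \mathbb{K}e_k$, which is one-dimensional and non-zero (as $e_k$ is a basis element, hence non-zero). A non-zero one-dimensional ideal has no non-zero proper ideals inside it, so it is minimal; therefore $e_k$ is a minimal idempotent in the sense of Definition \ref{42}.

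For the second assertion, suppose $0 \neq a \in A$ satisfies $Aa = \mathbb{K}a$. Then $a^2 = a \cdot a \in Aa = \mathbb{K}a$, so there exists a unique $r \in \mathbb{K}$ with $a^2 = ra$. Assume $r \neq 0$ and set $b := \tfrac{1}{r}a$. A direct computation gives
\[
b^2 = \tfrac{1}{r^2} a^2 = \tfrac{1}{r^2}(ra) = \tfrac{1}{r}a = b,
\]
so $b$ is an idempotent. To conclude minimality, note that by commutativity $aA = Aa = \mathbb{K}a$, so $\mathbb{K}a$ is an ideal of $A$ containing $a$; therefore $\id(\{b\}) = \id(\{a\}) = \mathbb{K}a$ is again one-dimensional and non-zero, hence a minimal ideal.

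There is no real obstacle here; everything reduces to the orthogonality property of a natural basis plus commutativity of evolution algebras. The only mild subtlety is to keep the second part clearly separated from the first, since the element $a$ in the second statement is not required to be natural, and the proof of minimality must therefore go through the identity $Aa = \mathbb{K}a$ rather than through a basis argument.
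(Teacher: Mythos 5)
Your proposal is correct and follows essentially the same route as the paper: the orthogonality of the natural basis gives $ae=a_ke$ for every $a\in A$, so $\id(\{e\})=Ae=\mathbb{K}e$ is one-dimensional and hence minimal, and the second assertion follows from the direct computation $(\tfrac{1}{r}a)^2=\tfrac{1}{r}a$ together with $\id(\{a\})=\mathbb{K}a$. Your write-up is in fact slightly more explicit than the paper's (checking that $\mathbb{K}e$ is an ideal and deriving $a^2=ra$ from $Aa=\mathbb{K}a$), but the argument is the same.
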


\begin{proof}
Let $e\in A$ be a natural idempotent of $A.$ Then there exists a natural basis $B=\{e_{i}\mid i\in\Lambda\}$ generating $A$ via linear combinations such that $e:=e_{j}\in B$ for some $j\in\Lambda.$ In particular, $A$ is an evolution algebra. Then every element $a\in A$ can be written as $a=\sum_{i\in\Lambda}a_{i}e_{i}$ with $a_{i}\neq0$ just for a finite number of elements in $\Lambda.$ Then $ae=ea=e(ae)=(ea)e=a_{j}e$ so $\id(\{e\})=Ae=eA=e(Ae)=(eA)e=\mathbb{K}e$ has dimension $1$ and $e$ is a minimal idempotent.

On the other hand, if $Aa=\mathbb{K}a,$ then $\id(\{\frac{1}{r}a\})=\id(\{a\})=\mathbb{K}a$ has dimension $1$ so $\frac{1}{r}a$ is a minimal element verifying $\frac{1}{r}a\frac{1}{r}a=\frac{1}{r^{2}}aa=\frac{1}{r^{2}}ra=\frac{1}{r}a.$ Thus, $\frac{1}{r}a$ is a minimal idempotent when $r\neq 0.$
\end{proof}

Given two minimal idempotents $e$ and $u$ in an evolution algebra $A,$ with $\dim(A)=n,$ generating the ideals $E$ and $U$ with $E\neq U,$ respectively, such that $e$ is natural and $u$ is just an idempotent, we are interested in their relation. First of all, $eu=ue\in I\cap E=\{0\}$ so $eu=0.$ This implies $e+u$ is also a non-zero idempotent. As $e$ is natural, it belongs to a natural basis $B:=\{e_{1},\dots,e_{n}\}$ with $e_{1}=e.$ If we write $u=\sum_{j=1}^{n}a_{j}e_{j},$ then the first thing that we know for sure is that $a_{1}=0$ because $0=eu=e\sum_{j=1}^{n}a_{j}e_{j}=e_{1}\sum_{j=1}^{n}a_{j}e_{j}=a_{1}e_{1}.$ That means that $u\in\linspan (B\smallsetminus\{e_{1}\}).$ That is, we have just proved that there exists always a natural basis separating a natural idempotent from any other minimal idempotent and these \textit{separating} bases (there could be more than one) are precisely those ones that makes $e$ a natural element. Thus, given a set of $m$ different natural (and, as we have proved, minimal) idempotents $\{e^{k}\mid 1\leq k\leq m\},$ we can find $m$ natural bases $B^{k}$ with $e^{k}\in B^{k}$ of $A,$ for $1\leq k\leq m,$ such that $u\in\cap_{k=1}^{m}\linspan(B^{k}\smallsetminus\{e^{k}\}).$ This provide us with knowledge about the \textit{confinement} of idempotents by natural bases of $A$ in the evolution algebra $A$ given the existence of other natural idempotents.

We remember that the minimal ideal of the evolution algebra $A$ generated by a natural idempotent of $A$ has necessarily dimension $1$ as we proved in the previous proposition. Now, it is easy to see that, if we choose $m$ different natural idempotents $e^{k},$ for $1\leq k\leq m,$ generating different minimal ideals of the evolution algebra $A,$ then we can collect them in a natural basis $\{e^{k}\mid 1\leq k\leq m\}$ generating by linear combination the evolution ideal $\alg(\{e^{k}\mid 1\leq k\leq m\})=\id(\{e^{k}\mid 1\leq k\leq m\})=\linspan(\{e^{k}\mid 1\leq k\leq m\})$ of the evolution algebra $A.$ This tells us that, given $m$ different minimal ideals generated by a natural idempotent in an evolution algebra $A,$ it is always possible to construct an evolution ideal $I$ of $A$ of dimension $m$ having the set formed by the chosen natural idempotents generating these $m$ different minimal ideals of the evolution algebra $A$ as its natural basis. One interesting questions to analyze in the future is when $I=A.$

However and parallelly, if we just choose $m$ different minimal (not necessarily natural) idempotents  $e^{k},$ for $1\leq k\leq m,$ generating different minimal ideals (which now could have dimension greater than $1$) of the evolution algebra $A,$ then we can collect them all in a natural basis $\{e^{k}\mid 1\leq k\leq m\}.$ But now this basis is not necessarily a basis of the ideal generated by these elements $\id(\{e^{k}\mid 1\leq k\leq m\})$ because it could happen that $\id(\{e^{k}\mid 1\leq k\leq m\})\neq\linspan(\{e^{k}\mid 1\leq k\leq m\})$ as a consequence of the possibility of the existence of an element $e^{i}\in\{e^{k}\mid 1\leq k\leq m\}$ not being natural and $a\in A$ with $ae_{i}\notin\linspan(\{e^{i}\}).$ Nevertheless, this continues telling us that, given $m$ different minimal ideals generated each one by an idempotent in an evolution algebra, it is always possible to construct an evolution subalgebra $A':=\alg(\{e^{k}\mid 1\leq k\leq m\})=\linspan(\{e^{k}\mid 1\leq k\leq m\})$ of $A$ of dimension $m$ having the set formed by the chosen generators of these $m$ different minimal ideals of the evolution algebra $A$ as its natural basis. One interesting questions to analyze in the future is when $A'=A.$

Let $A$ be an algebra over a field $\mathbb{K}.$ If $e\in A$ is just a minimal idempotent, we cannot assure that $Ae=\mathbb{K}e.$ We call the dimension of the element $a\in A,$ $\dim(a),$ to the dimension of the ideal $\id(\{a\})$ generated by $a.$

\begin{definicion}\label{86}
Let $A$ be an algebra. A basis $B$ is an \textbf{orthogonal bipartite basis} if there exist nonempty subsets $C_{1},C_{2}\subseteq B$ such that we can put $B=C_{1}\cup C_{2}$ with $c_{1}c_{2}=0$ for all $c_{1}\in C_{1}$ and $c_{2}\in C_{2}$ (we represent this writing $C_{1}\bot C_{2}$). We say that $c\in A$ is \textbf{$n$-natural} if there exists an orthogonal bipartite basis $B=C\cup C'$ with $\Card(C)=n$ such that $c\in C$ and $\id(\{c\})=\linspan(\{C\}).$ 
\end{definicion}

As happened with natural elements, we note that this definition implies that $0$ can never be a $n$-natural element.

\begin{definicion}\label{87}
Let $A$ be an algebra and $E$ a subspace of $A.$ We say that $U$ is an \textbf{orthogonal complement of $E$} in $A$ if $E\bot U$ ($eu=0$ for all $e\in E$ and $u\in U$) and $A=E\oplus U.$
\end{definicion}

\begin{teorema}\label{88}
Let $A$ be an algebra and $e\in A$ a minimal idempotent such that $\id(\{e\})$ is an evolution ideal with a natural basis $B$ verifying that there exists $c\in B$ such that $be=0$ for all $b\in B\smallsetminus\{c\}$ but $ce\neq0$ and has an orthogonal complement $E\bot\id(\{e\})$ generated by linear combinations by a basis $C'\bot B,$ then $e$ is a $\dim(e)$-natural element. In particular, if $C'$ is a natural basis of $E$ in the sense that, for all pairs of two different elements $c,c'\in C,$ we have that $cc'=0,$ then $e$ is a natural idempotent. On the other hand, if $0\neq a\in A$ is a $\dim(a)$-natural minimal element generating an evolution ideal $\id(\{a\})$ with $\{af,fa\}\neq\{0\}$ for some $f\in \id(\{a\})$ and $\mathbb{K}$ is a $\dim(a)$-FSEANI, there exist a minimal idempotent $e$ such that $\id(\{e\})=\id(\{a\}).$ 
\end{teorema}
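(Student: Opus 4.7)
The plan is to prove the two assertions separately, both by exploiting the orthogonal bipartite basis structure provided by the hypotheses.

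For the first assertion, the strategy is a single basis-exchange inside $\id(\{e\})$, replacing the distinguished element $c\in B$ by $e$. Expanding $e=\sum_{b\in B}\alpha_{b}b$ in the natural basis $B$ of $\id(\{e\})$ and using the natural-basis orthogonality to compute $eb=\alpha_{b}b^{2}$ for every $b\in B$, the hypothesis ``$be=0$ for $b\neq c$ and $ce\neq 0$'' forces $\alpha_{c}\neq 0$ and $c^{2}\neq 0$. Hence $c\in\linspan((B\smallsetminus\{c\})\cup\{e\})$ and, by dimension count, $C:=(B\smallsetminus\{c\})\cup\{e\}$ is a basis of $\id(\{e\})$ of size $\dim(e)$. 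The required orthogonality $C\bot C'$ is then straightforward: for $b\in B\smallsetminus\{c\}$ and $c'\in C'$ the identity $bc'=0$ follows from $C'\bot B$, and expanding $e$ in $B$ gives $ec'=0$. Thus $C\cup C'$ is an orthogonal bipartite basis of $A$ witnessing that $e$ is $\dim(e)$-natural. For the ``in particular'' clause, I would add the observation that $C$ itself is pairwise orthogonal: $B\smallsetminus\{c\}$ inherits orthogonality from the natural basis $B$, and $eb=0$ for each $b\in B\smallsetminus\{c\}$ is the hypothesis. Together with $C'$ being a natural basis of $E$, this turns $C\cup C'$ into a natural basis of $A$ containing $e$, so $e$ is a natural idempotent.

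For the second assertion, the plan is to turn $\id(\{a\})$ into a simple evolution algebra in its own right and then invoke the FSEANI hypothesis. Fix an orthogonal bipartite basis $B=C\cup C'$ of $A$ witnessing the $\dim(a)$-naturality of $a$, with $\id(\{a\})=\linspan(C)$ and $E:=\linspan(C')$. The structural step is that, by bilinearity and $C\bot C'$, one has $\id(\{a\})\cdot E=E\cdot\id(\{a\})=\{0\}$; consequently, a subspace $I\subseteq\id(\{a\})$ is an ideal of $A$ if and only if it is an ideal of $\id(\{a\})$ viewed as an algebra on its own. Combined with the minimality of $\id(\{a\})$ as an ideal of $A$, this forces $\id(\{a\})$ to have no proper non-zero ideals as an algebra; together with $\id(\{a\})^{2}\neq\{0\}$ (provided by the hypothesis $\{af,fa\}\neq\{0\}$), $\id(\{a\})$ is simple. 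Since $\id(\{a\})$ is also an evolution ideal by hypothesis, it is a simple evolution algebra of dimension $\dim(a)$ over $\mathbb{K}$ with non-zero product. The $\dim(a)$-FSEANI property of $\mathbb{K}$, through the argument of Theorem \ref{80} (relying on Construction \ref{B}, Theorem \ref{THE} and Definition \ref{58}), then produces a non-zero idempotent $e\in\id(\{a\})$; minimality of $\id(\{a\})$ forces $\id(\{e\})=\id(\{a\})$, so $e$ is the desired minimal idempotent.

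The main obstacle is the simplicity step in the second part: minimality of $\id(\{a\})$ as an ideal of $A$ does not by itself imply simplicity of $\id(\{a\})$ as an algebra, and it is precisely the orthogonal decomposition $A=\id(\{a\})\oplus E$ with $\id(\{a\})\bot E$ coming from $\dim(a)$-naturality that lets one identify the internal ideals of $\id(\{a\})$ with the ambient ideals of $A$ contained in it. Once this identification is secured, the first part collapses to the basis swap above plus a short orthogonality verification, and the second part collapses to an application of the FSEANI machinery in the simple-evolution-algebra setting.
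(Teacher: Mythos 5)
Your proposal is correct and takes essentially the same route as the paper's proof: the basis exchange $C:=(B\smallsetminus\{c\})\cup\{e\}$ justified by $ce=\alpha_{c}c^{2}\neq0$ for the first assertion, and for the second the orthogonal decomposition $A=\id(\{a\})\oplus E$ with $E\bot\id(\{a\})$ to identify internal ideals of $\id(\{a\})$ with ambient ones, yielding simplicity and then a non-zero idempotent via the FSEANI/Construction \ref{B} machinery. Your write-up is in fact slightly more explicit than the paper's (e.g., spelling out the ``in particular'' clause and the ideal identification), but the underlying argument is the same.
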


In fact, it is not necessary for $e$ to be idempotent but just \textit{idempotent up to scalars}, that is, $ee=ke$ for one scalar $0\neq k\in\mathbb{K}.$

\begin{proof}
Under our hypothesis $C:=(B\smallsetminus\{c\})\cup\{e\}$ is a basis for $\id(\{e\})$ given the fact that $e=\sum_{d\in B}r_{d}d$ and $0\neq ce=\sum_{d\in B}r_{d}cd$ implies $\proj_{B,c}(e)\neq0$ because $B$ is a natural basis. As $\linspan(C')=E\bot\id(\{e\})=\linspan(C),$ we have that $C\cup C'$ is a basis for $A$ such that $C\bot C'.$ Hence, $C\cup C'$ is an orthogonal bipartite basis with $\Card(C)=\dim(e)$ such that $e\in C$ and $\id(\{e\})=\linspan(C).$

On the other hand, suppose $0\neq a\in A$ is a $\dim(a)$-natural minimal element and $\mathbb{K}$ is a $\dim(a)$-FSEANI. Then $\id(\{a\})$ is a minimal ideal and $\id(\{a\})^{2}\neq\{0\}$ as a consequence of the existence of $f.$ Also, $\{0\}$ is obviously the only proper ideal of $\id(\{a\})$ considered as an algebra given the fact that $A=\id(\{a\})\oplus E$ and $E\bot\id(\{a\}).$ Thus, $\id(\{a\})$ is a simple evolution algebra of dimension $\dim(a)$ over the $\dim(a)$-FSEANI $\mathbb{K}$ so, by Construction \ref{B} and the definitions following it, there exists a minimal idempotent $e$ such that $\id(\{e\})=\id(\{a\})$ and the theorem is proved.
\end{proof}

The existence of the kind of bases described on the previous theorem could be developed through a further study about how are the ideals generated by one element and its bases. For example, some conditions of movement, change or replacement of the elements through the nonassociative operation giving rise to the elements of the basis could be asked.

We will speak about unicity of natural bases in the sense given to this notion in \cite{bcm}. Extending this notion we will be able to count different natural bases up to the obvious isomorphisms of natural bases given by the multiplications of the elements in the basis by non-zero scalars and the permutations (reordering) of the elements in the basis.

\begin{definicion}\cite{bcm}\label{89}
Let $A$ be an evolution algebra over a field $\mathbb{K}.$ We say that $A$ has an \textbf{unique natural basis} if the subgroup of $\Aut_{\mathbb{K}}(A)$ formed by the elements that map natural bases into natural bases is $S_{n}\rtimes(K^{\times})^{n}.$
\end{definicion}

Now will examine the cases in which the natural basis is unique for an evolution algebra.

\begin{teorema}\cite{bcm}\label{C}
Let $A$ be a non-degenerate evolution algebra over $\mathbb{K}.$ Then $A$ has a unique natural basis if and only if there exists a natural basis $B$ such that, for any pair of different vectors $u,v\in B,$ we have that $u^{2}$ and $v^{2}$ are linearly independent.
\end{teorema}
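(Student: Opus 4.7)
The plan is to prove the two directions of the equivalence separately.

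For the direction $(\Leftarrow)$, fix the distinguished natural basis $B = \{e_1, \ldots, e_n\}$ for which $e_i^2$ and $e_j^2$ are linearly independent whenever $i \neq j$, and let $B' = \{f_1, \ldots, f_n\}$ be an arbitrary other natural basis. Writing the transition matrix $P = (a_{ij})$ so that $f_k = \sum_{i} a_{ik} e_i$, and expanding the orthogonality condition $f_k f_l = 0$ for $k \neq l$ yields $\sum_{i} a_{ik} a_{il} e_i^2 = 0$. The goal becomes to deduce that $a_{ik} a_{il} = 0$ for all $i$ whenever $k \neq l$, which forces $P$ to be a monomial matrix and hence places the corresponding automorphism inside $S_n \rtimes (\mathbb{K}^\times)^n$. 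This is achieved by analyzing the overlap of the supports of any two columns of $P$: since $A$ is non-degenerate we have $e_i^2 \neq 0$, so an overlap of size one gives an immediate contradiction; the pairwise linear independence hypothesis rules out overlaps of size two; and overlaps of size three or more have to be handled by invoking the whole system of $\binom{n}{2}$ bilinear orthogonality relations together with the invertibility of $P$.

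For the direction $(\Rightarrow)$ it is cleaner to argue the contrapositive. Assume every natural basis of $A$ contains a pair of distinct elements whose squares are linearly dependent, fix a natural basis $B$, and after relabeling suppose $e_1^2 = \lambda e_2^2$ for some $\lambda \in \mathbb{K}^\times$ (note $e_2^2 \neq 0$ by non-degeneracy, so $\lambda$ is well-defined and non-zero). For a parameter $\mu \in \mathbb{K}^\times$ set $f_1 := e_1 + \mu e_2$, $f_2 := e_1 - (\lambda/\mu) e_2$, and $f_j := e_j$ for $j \geq 3$. The products $f_1 f_j$ and $f_2 f_j$ vanish for $j \geq 3$ since $e_i e_j = 0$ in the natural basis $B$, while the critical product $f_1 f_2 = e_1^2 - (\lambda/\mu)\mu \, e_2^2 = \lambda e_2^2 - \lambda e_2^2 = 0$ vanishes by construction. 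Selecting $\mu$ with $\mu^2 \neq -\lambda$ guarantees that $\{f_1, f_2\}$ is linearly independent, so $B' = \{f_1, \ldots, f_n\}$ is a natural basis. The change of basis from $B$ to $B'$ is evidently not a composition of a permutation with a diagonal rescaling, violating the description of the ambient subgroup of $\Aut_{\mathbb{K}}(A)$ in Definition \ref{89}, and therefore $A$ does not have a unique natural basis.

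The main obstacle I expect to encounter is the support analysis in the backward direction when the overlap between two columns of $P$ has size at least three, because the pairwise hypothesis no longer directly kills the corresponding linear relation. To tackle it I would combine the information from all the orthogonality equations simultaneously: the relation $\sum_{i} a_{ik} a_{il} e_i^2 = 0$ forces a non-trivial linear dependence among the $e_i^2$ indexed by the overlap, and then the invertibility of $P$, applied across every pair $(k,l)$, should prevent such dependencies from being jointly consistent. A clean way to implement this would be to extract from the column supports a partition of $\{1, \ldots, n\}$ into blocks inside which all $f_k$ are jointly supported and show, using the pairwise linear independence of the $e_i^2$ restricted to each block, that every block must be a singleton; the non-singleton alternative would produce a linear dependence of the form ruled out by hypothesis, completing the argument.
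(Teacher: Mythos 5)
The paper itself offers no proof of Theorem \ref{C}: it is quoted from \cite{bcm} without argument, so your proposal has to stand on its own, and at present it does not in the direction that carries all the weight, namely that one natural basis $B$ with pairwise linearly independent squares forces every other natural basis to be a monomial transform of $B$. Your reduction to the orthogonality relations $\sum_i a_{ik}a_{il}e_i^2=0$ and the support analysis are the right start, and overlaps of size one and two are indeed excluded. But an overlap of size at least three is exactly where the theorem has content, and there you only record a plan (``should prevent'', ``would produce''). Moreover, the block-partition plan as you describe it cannot close the gap: inside a block of jointly supported columns, pairwise linear independence of the squares is perfectly compatible with a dependence involving three or more of them (for instance $e_1^2$, $e_2^2$ and $e_1^2+e_2^2$ are pairwise independent), so the claim that ``the non-singleton alternative would produce a linear dependence of the form ruled out by hypothesis'' is not true as stated --- the hypothesis only rules out dependences of length at most two. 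What is actually needed is the multiplicative interplay of all the relations at once. To illustrate in dimension three: if the three columns of $P$ share full support and $v$ spans the kernel of $x\mapsto\sum_i x_ie_i^2$, then $a_{i1}a_{i2}=c_{12}v_i$, $a_{i1}a_{i3}=c_{13}v_i$, $a_{i2}a_{i3}=c_{23}v_i$ give $a_{i1}^2=(c_{12}c_{13}/c_{23})v_i$, hence $a_{i1}/a_{i2}=a_{i1}^2/(a_{i1}a_{i2})=c_{13}/c_{23}$ is constant in $i$, so columns $1$ and $2$ are proportional and $P$ is singular. Some argument of this kind, carried out for arbitrary overlap patterns, is the missing core of the proof; without it the backward implication is unproved.

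The contrapositive argument for the forward implication is correct as far as it goes, but it needs a unit $\mu$ with $\mu^2\neq-\lambda$, and such a $\mu$ need not exist when $|\mathbb{K}|\leq 3$, since over $\mathbb{Z}_2$ and $\mathbb{Z}_3$ every unit squares to $1$ and the case $-\lambda=1$ is out of reach. This is not merely a defect of your particular construction: over $\mathbb{Z}_2$ the non-degenerate evolution algebra with $e_1^2=e_2^2=e_1$ has linearly dependent squares, yet $\{e_1,e_2\}$ is its only natural basis, because $(e_1+e_2)e_1=(e_1+e_2)e_2=e_1\neq0$; an analogous two-dimensional example exists over $\mathbb{Z}_3$ (take $e_1^2=e_1$, $e_2^2=2e_1$). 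So, under the reading of uniqueness you reasonably adopt (all natural bases are monomial transforms of one another, which is surely the intent of Definition \ref{89}), the implication ``unique natural basis implies the $2$LI condition'' fails for these small fields; the statement as transcribed here must be carrying hypotheses from \cite{bcm} that are not visible, and your proof of this direction is valid only once $|\mathbb{K}|\geq 4$, where squaring is non-constant on $\mathbb{K}^{\times}$ and $\mu$ can be chosen. You should state that restriction explicitly. A smaller point: you use that the \emph{arbitrary} fixed basis $B$ has $e_2^2\neq0$, which relies on the citable fact that non-degeneracy does not depend on the chosen natural basis; in the backward direction, by contrast, $u^2\neq0$ for $u\in B$ already follows from the $2$LI hypothesis when $\dim(A)\geq2$.
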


The next definition based on the previous theorem and generalizing \cite[Definition 3.4]{ymv} will be useful.

\begin{definicion}\cite{bcm}
Let $A$ be an evolution algebra. We say that $A$ has the property $2$LI if, for every pair of different vectors $e_{i},e_{j}$ of every natural basis $B,$ the set $\{e_{i}^{2},e_{j}^{2}\}$ is linearly independent.
\end{definicion}

As it is stated in \cite{bcm}, the previous definition is consistent because Theorem \ref{C} tells us that it does not depend on the selected natural basis. We can also opt for the way suggested by the next definition and the result following it. In fact, we can restate the previous theorem.

\begin{proposicion}
Let $A$ be a non-degenerate evolution algebra over $\mathbb{K}.$ Then $A$ has a unique natural basis if and only if $A$ has the property $2$LI.
\end{proposicion}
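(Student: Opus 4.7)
The plan is to use Theorem \ref{C} together with the characterization of the automorphism group inherent in the definition of unique natural basis to translate ``existence of a natural basis with pairwise linearly independent squares'' into ``every natural basis has this property''. I would split the argument into the two implications.

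First, for the direction ``property 2LI implies unique natural basis'', I would simply notice that property 2LI is, by definition, a universal statement over natural bases: if it holds for every natural basis, then in particular there exists a natural basis $B$ of $A$ such that for any two different $u,v\in B$ the set $\{u^{2},v^{2}\}$ is linearly independent. Since $A$ is assumed non-degenerate, Theorem \ref{C} applies and gives that $A$ has a unique natural basis. This direction is essentially definitional and should require only one or two lines.

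For the converse, ``unique natural basis implies property 2LI'', I would start by invoking Theorem \ref{C} to produce a \emph{single} natural basis $B_{0}=\{e_{1},\dots,e_{n}\}$ of $A$ such that $\{e_{i}^{2},e_{j}^{2}\}$ is linearly independent for every pair $i\neq j$. Then I would use the unicity hypothesis, which states that the subgroup of $\Aut_{\mathbb{K}}(A)$ that sends natural bases to natural bases is exactly $S_{n}\rtimes(\mathbb{K}^{\times})^{n}$. This means that every other natural basis $B'$ of $A$ is of the form $B'=\{\lambda_{i}e_{\sigma(i)}\mid 1\leq i\leq n\}$ for some permutation $\sigma\in S_{n}$ and non-zero scalars $\lambda_{1},\dots,\lambda_{n}\in\mathbb{K}^{\times}$. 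Hence the squares of the elements of $B'$ are $(\lambda_{i}e_{\sigma(i)})^{2}=\lambda_{i}^{2}e_{\sigma(i)}^{2}$, which are just non-zero rescalings of a permutation of the $e_{k}^{2}$'s. Since linear independence is preserved under both non-zero rescaling and reindexing, for any two different indices $i\neq j$ the set $\{(\lambda_{i}e_{\sigma(i)})^{2},(\lambda_{j}e_{\sigma(j)})^{2}\}$ remains linearly independent. This proves property 2LI on the arbitrary natural basis $B'$, and therefore on every natural basis of $A$.

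The only mildly delicate point, and the place I would be most careful, is ensuring that the description of the ``natural-basis-preserving'' automorphism subgroup as $S_{n}\rtimes(\mathbb{K}^{\times})^{n}$ indeed translates into the statement that two natural bases differ only by a permutation and a diagonal rescaling; this is the content of Definition \ref{89} once one unpacks how such an automorphism acts on a fixed natural basis. Beyond that, no real obstacle is expected: the argument is a clean two-way translation between the existence-form of Theorem \ref{C} and the universal-form built into property 2LI, made possible precisely by the fact that the uniqueness hypothesis constrains all natural bases to be trivial rescalings/permutations of a single one.
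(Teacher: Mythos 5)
Your proposal is correct and follows essentially the route the paper intends: the paper gives no separate proof, presenting this proposition as a direct restatement of Theorem \ref{C} together with the remark (deferred to \cite{bcm}) that the $2$LI condition does not depend on the chosen natural basis. Your two implications simply spell out that restatement --- the existential form via Theorem \ref{C}, and the upgrade to every natural basis via the permutation-and-rescaling description of natural bases coming from Definition \ref{89}, under which squares change only by non-zero scalars, so pairwise linear independence is preserved.
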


\begin{definicion}
An algebra $A$ is said to be \textbf{perfect} if $A^{2}=A.$
\end{definicion}

\begin{teorema}\cite[Theorem 4.4]{el}
Every perfect evolution algebra has a unique natural basis.
\end{teorema}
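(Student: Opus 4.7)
The plan is to reduce the claim to Theorem \ref{C} (equivalently, the Proposition just before the statement we must prove) by showing that a perfect evolution algebra automatically exhibits a natural basis whose squares are linearly independent, and that such an algebra is non-degenerate in the sense of item 4 of Definition \ref{37}. Both consequences will drop out of the same simple linear-algebra observation about what the square ideal $A^{2}$ looks like in a natural basis.

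First I would fix any natural basis $B=\{e_{1},\dots,e_{n}\}$ of $A$ (which exists because $A$ is an evolution algebra, and is finite by the blanket convention that our algebras have finite dimension as vector spaces). The key computation is that, thanks to the orthogonality relations $e_{i}e_{j}=0$ for $i\neq j$, an arbitrary product $(\sum_{i}a_{i}e_{i})(\sum_{j}b_{j}e_{j})$ reduces to $\sum_{i}a_{i}b_{i}e_{i}^{2}$, so that $A^{\overline{2}}\subseteq\linspan(\{e_{i}^{2}:i\in\{1,\dots,n\}\})$; taking the linear span gives $A^{2}=\linspan(\{e_{i}^{2}:i\in\{1,\dots,n\}\})$.

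Now I would plug in the hypothesis $A^{2}=A$. This forces the $n$ vectors $e_{1}^{2},\dots,e_{n}^{2}$ to span the $n$-dimensional space $A$, hence to form a basis of $A$; in particular they are linearly independent and all non-zero. The non-vanishing of each $e_{i}^{2}$ tells us that $A$ is non-degenerate in the sense of Definition \ref{37}, so the hypotheses of Theorem \ref{C} are satisfied. The linear independence of the full set $\{e_{i}^{2}\}$ immediately implies that for any two different indices $i\neq j$ the pair $\{e_{i}^{2},e_{j}^{2}\}$ is linearly independent, which is exactly the condition demanded by Theorem \ref{C} (and by the property 2LI at least on the basis $B$).

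I expect no real obstacle here: the proof is essentially a dimension count feeding directly into Theorem \ref{C}. The only subtle point to keep clean is the distinction between the two kinds of (non-)degeneracy in Definition \ref{37}, since Theorem \ref{C} only requires the weaker hyphenated notion (item 4), which is exactly what $e_{i}^{2}\neq 0$ for every $i$ in a natural basis provides. Once this is observed, invoking Theorem \ref{C} closes the argument and yields the uniqueness of the natural basis.
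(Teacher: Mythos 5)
Your argument is correct as far as this paper is concerned, but note that the paper itself offers no proof of this statement: it is imported verbatim as a citation of \cite[Theorem 4.4]{el}, so there is no internal proof to compare against. What you supply is a short, self-contained derivation from Theorem \ref{C}: in any natural basis $B=\{e_{1},\dots,e_{n}\}$ the orthogonality relations give $A^{2}=\linspan(\{e_{i}^{2}\})$, so perfection forces the $n$ vectors $e_{i}^{2}$ to span the $n$-dimensional space $A$, hence to be linearly independent and in particular non-zero; this yields non-degeneracy in the sense of item 4 of Definition \ref{37} and pairwise linear independence of squares, which is exactly the hypothesis of Theorem \ref{C}. That chain of reasoning is sound and is arguably a nice addition, since it makes explicit why perfection implies the $2$LI-type condition. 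The one caveat worth keeping in mind is that your dimension count (spanning set of size $n$ in dimension $n$ must be independent) is genuinely finite-dimensional, so your proof establishes the theorem only under this paper's blanket convention that algebras are finite-dimensional; the cited result in \cite{el}, if stated for arbitrary dimension, would need a different argument, because in infinite dimension $\linspan(\{e_{i}^{2}\})=A$ does not force the $e_{i}^{2}$ to be linearly independent. Within the scope of this text, however, your reduction to Theorem \ref{C} is complete and correct.
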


We include here the next definition which was suggested in an unpublished version of the work realized during the preparation of the preprint \cite{bcm} so the credits for its first introduction should be recognized originally for their authors.

\begin{definicion}
Let $A$ be an evolution algebra of dimension $n.$ We say that $A$ has the property $m$LI if $m$ is the greatest number such that there exist a natural basis $B$ of the algebra $A$ having $m$ different vectors $e_{1},\dots e_{m}$ verifying that the set $\{e_{1}^{2},\dots e_{m}^{2}\}$ is linearly independent.
\end{definicion}

The next corollary, which comes again in debt with the work realized in \cite{bcm}, follows immediately from the previous definitions.

\begin{corolario}
Let $A$ be an evolution algebra of dimension $n$ with the property $2$LI. Then, for every natural basis $B$ and for every pair of different vectors $u$ and $v$ of $B,$ we have that $u^{2}$ and $v^{2}$ are linearly independent so, by Theorem \ref{C}, $A$ has a unique natural basis. As a consequence, under the same previous hypothesis, there exist natural idempotents if and only if every natural basis (there is just one natural basis) has an element $e$ such that $e^2=e.$ Also, an evolution algebra of dimension $n$ is perfect if and only if it has the property $n$LI.
\end{corolario}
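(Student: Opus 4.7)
The plan is to verify the three assertions of the corollary in turn, each of them being essentially direct once the right definition is unfolded.

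For the first assertion, the statement that for every natural basis $B$ and every pair of distinct $u,v\in B$ the squares $u^2$ and $v^2$ are linearly independent is literally the definition of property $2$LI given immediately before the corollary. Choosing any natural basis of $A$ as the witness (which exists since $A$ is an evolution algebra) provides the hypothesis of Theorem \ref{C}, so that theorem yields uniqueness of the natural basis. Note that Theorem \ref{C} requires non-degeneracy, which I would treat as implicitly part of the setting.

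For the equivalence about natural idempotents, one direction is trivial by Definition \ref{84}: an element $e$ of a natural basis that satisfies $e^2 = e$ is a natural idempotent of $A$. The converse uses the uniqueness established in the first part: starting from an arbitrary natural idempotent $e$ of $A$, Definition \ref{84} provides some natural basis $B'$ with $e\in B'$; the uniqueness of the natural basis in the sense of Definition \ref{89} then shows that $B'$ is the essentially unique natural basis, which therefore contains the idempotent element $e$.

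For the final claim about perfection, the key observation is the identity $A^2 = \linspan\{e_1^2, \ldots, e_n^2\}$ for any natural basis $B = \{e_1, \ldots, e_n\}$, which is immediate from the orthogonality $e_i e_j = 0$ for $i\neq j$ defining a natural basis: every product of two elements of $A$ is a linear combination of the $e_i^2$'s. Since $\dim A = n$, the equality $A^2 = A$ is equivalent to $\{e_1^2, \ldots, e_n^2\}$ spanning the $n$-dimensional space $A$, which by dimension counting is equivalent to linear independence of those $n$ squares. By the definition of property $m$LI applied to $m = n$, this last condition is precisely property $n$LI.

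The main subtlety, rather than a real obstacle, lies in parsing the second equivalence in view of the scaling ambiguity built into Definition \ref{89}: rescaling an element of a natural basis typically destroys idempotency, so the phrase \emph{every natural basis has an element $e$ with $e^2 = e$} must be interpreted modulo the action of $S_n \rtimes (K^\times)^n$, i.e., as \emph{some (equivalently, the essentially unique) natural basis contains an idempotent element}, which is also what the parenthetical remark in the corollary statement signals.
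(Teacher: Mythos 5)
Your proposal is correct and follows essentially the same route as the paper, which offers no separate argument beyond stating that the corollary ``follows immediately from the previous definitions'' together with Theorem \ref{C}: the first claim is the definition of property $2$LI, the idempotent equivalence is Definition \ref{84} combined with uniqueness of the natural basis, and the perfection claim is the identity $A^{2}=\linspan(\{e_{1}^{2},\dots,e_{n}^{2}\})$ plus dimension counting. The only refinement worth making is that you need not treat non-degeneracy as an implicit hypothesis: for $n\geq 2$, property $2$LI already forces $e_{i}^{2}\neq 0$ for every basis vector (a set containing $0$ is linearly dependent), so the hypothesis of Theorem \ref{C} is automatically satisfied.
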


\begin{definicion}
Let $A$ be an algebra and $B$ a subset of $A.$ We say that $B$ \textbf{ramifies (through the product via a set of multipliers $D\subseteq A$ by the left) towards a set $C$ in one iteration} if there exist a set of elements $D$ contained in $A$ such that, for every $c\in C,$ there exist $d\in D$ and $b\in B$ such that $c=db$ (equivalently, if for the equation $c=xb$ there exists a solution in $D$). We say that $B$ \textbf{ramifies (through the product via a sequence of sets of multipliers $D_{1},\dots,D_{n}\subseteq A$ by the left) towards a set $C$ in $n$ iterations} if there exist a sequence of $n$ sets of elements $D_{1},\dots,D_{n}$ contained in $A$ such that, for every $c\in C,$ there exist a sequence of $n$ elements verifying $d_{i}\in D_{i}$ and $b\in B$ such that $c=d_{n}(\cdots(d_{2}(d_{1}b))).$ Moreover, we say that $B$ \textbf{ramifies inside itself (through the product via a sequence of sets of multipliers $D_{1},\dots,D_{n}\subseteq B$ by the left) towards a set $C$ in $n$ iterations} if there exist a sequence of $n$ sets of elements $D_{1},\dots,D_{n}$ contained in $B$ such that, for every $c\in C,$ there exist a sequence of $n$ elements verifying $d_{i}\in D_{i}$ and $b\in B$ such that $c=d_{n}(\cdots(d_{2}(d_{1}b))).$ Also, we say that $B$ \textbf{ramifies homogeneously (through the product via a set of multipliers $D\subseteq A$ by the left) towards a set $C$ in $n$ iterations} if there exist a set of elements $D$ contained in $A$ such that, for every $c\in C,$ there exist a sequence of $n$ elements verifying $d_{i}\in D$ and $b\in B$ such that $c=d_{n}(\cdots(d_{2}(d_{1}b))).$ In addition, we say that $B$ \textbf{ramifies homogeneously inside itself (through the product via a set of multipliers $D\subseteq B$ by the left) towards a set $C$ in $n$ iterations} if there exist a set of elements $D$ contained in $B$ such that, for every $c\in C,$ there exist a sequence of $n$ elements verifying $d_{i}\in D$ and $b\in B$ such that $c=d_{n}(\cdots(d_{2}(d_{1}b))).$ In general, in any of these cases and if we are not interested in the number of iterations necessary or the sets involved in the process, we just say that $B$ \textbf{ramifies towards} $C.$
\end{definicion}

\begin{proposicion}
Let $A$ be an evolution algebra with a natural basis $B$ rafimifying inside itself towards itself in one iteration. Then all the elements $e\in B$ are natural idempotents.
\end{proposicion}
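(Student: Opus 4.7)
The plan is to translate the ramification hypothesis into a combinatorial statement about the squaring map on $B$ and then exploit finiteness of $B$.

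First I would unpack the definition of ``$B$ ramifies inside itself towards itself in one iteration'' in the case $n=1$, $C=B$: it asserts the existence of a subset $D_{1}\subseteq B$ such that each $c\in B$ can be written as $c=d_{1}b$ with $d_{1}\in D_{1}\subseteq B$ and $b\in B$. Since $B$ is a natural basis of $A$, products $d_{1}b$ with $d_{1}\neq b$ vanish; as $c\neq 0$, this forces $d_{1}=b$ and hence $c=b^{2}$. Therefore every basis vector is the square of a basis vector. Next I would examine the squaring map $\phi\colon B\to A$ defined by $\phi(b):=b^{2}$. The previous step says that for each $c\in B$ there is some $b\in B$ with $\phi(b)=c\in B$, so the set $S:=\{b\in B\mid b^{2}\in B\}$ already surjects onto $B$ through $\phi$. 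Because $B$ is finite in our finite-dimensional setting, $|S|\geq |B|$ forces $S=B$ and $\phi\colon B\to B$ to be a bijection; equivalently, the structure matrix of $A$ relative to $B$ is a permutation matrix.

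The last step, and the one I expect to be the main obstacle, is to show that this permutation is the identity, i.e.\ that $b^{2}=b$ for every $b\in B$. Once this is done, membership $b\in B$ makes $b$ a natural element in the sense of Definition \ref{84}, and the idempotency just established turns it into a natural idempotent, yielding the claim. To exclude a nontrivial cycle I would proceed by contradiction: if $\phi(b)=c\neq b$ and $\phi(c)=b$ (or more generally a cycle of length $\geq 2$), the subspace spanned by the orbit is an evolution subalgebra on which squaring has no fixed point; I would then couple this with Proposition \ref{85} applied to each minimal ideal generated by a cycle element, together with the uniqueness of the natural basis structure encoded by the ramification hypothesis, to derive a contradiction with the requirement that every element of $B$ itself appear as some $b^{2}$ in a manner compatible with the permutation being non-trivial. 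The delicate point here is that the bare ramification condition only forces the permutation matrix structure, so any definitive proof must extract more mileage either from the way $D_{1}$ is allowed to overlap with $B$, from an implicit minimality argument on the ideals $\id(\{b\})$, or from a consistency check among the distinct natural bases produced by following the orbits; this is the step where I anticipate the proof genuinely requires care.
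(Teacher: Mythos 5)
Your first two steps are sound and in fact more careful than what the paper records: using naturality of $B$ to reduce $c=d_{1}b$ to $c=b^{2}$, and then the counting argument showing that squaring restricts to a bijection of $B$ (so the structure matrix relative to $B$ is a permutation matrix), are both correct. The genuine gap is exactly the step you flagged: nothing in your sketch (orbits, Proposition \ref{85}, a hoped-for consistency contradiction) actually shows the permutation is the identity, and it cannot, because the ramification hypothesis as defined is satisfied by non-trivial permutations. Concretely, take the two-dimensional evolution algebra with natural basis $B=\{e_{1},e_{2}\}$ and $e_{1}^{2}=e_{2},$ $e_{2}^{2}=e_{1}$: with $D=B$ one has $e_{1}=e_{2}e_{2}$ and $e_{2}=e_{1}e_{1},$ so $B$ ramifies inside itself towards itself in one iteration, yet neither basis element is idempotent. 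So the plan of ``excluding a nontrivial cycle'' must fail; no amount of care at that step will rescue it under the literal reading of the definition.

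For comparison, the paper's own proof is a single line: it asserts that, since $B$ is a natural basis, the equation $e=xb$ with $x,b\in B$ ``can only have the solution $x=e=b$.'' Naturality only forces $x=b$ (hence $e=b^{2}$), not $b=e$; the jump from $e=b^{2}$ to $b=e$ is precisely the step you could not supply, and the cyclic example above shows it is not available in general. Your attempt has therefore isolated the point where the statement needs either a stronger reading of ``ramifies towards itself'' (for instance, requiring each $c\in B$ to satisfy $c=dc$ for some $d,$ which by naturality forces $d=c$ and $c^{2}=c,$ making the conclusion immediate together with Definition \ref{84}) or an additional hypothesis excluding cycles of length at least two in the squaring permutation; as written, neither your proposal nor the paper's argument closes that gap.
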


\begin{proof}
We fix $e\in B$ arbitrary. Then, as $B$ is a natural basis, $e=xb$ with $x,b\in B$ can only have the solution $x=e=b$ and, as $B$ rafimies inside itself towards itself in one iteration, the proposition is proved.
\end{proof}

We have immediately the following two easy corollaries.

\begin{corolario}
Let $A$ be an evolution algebra verifying the property $2$LI (with a unique natural basis $B$ therefore). Then all the elements $e\in B$ are natural idempotents if every (natural) basis $C$ of $A$ ramifies inside itself towards a natural basis in one iteration.
\end{corolario}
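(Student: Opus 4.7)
The plan is to reduce the corollary to the previous Proposition by combining the uniqueness of the natural basis, which follows from property $2$LI together with Theorem \ref{C}, with the ramification hypothesis applied to $B$ itself.

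First, I would apply the hypothesis to $B$: since $B$ is a natural basis of $A$, there exist a subset $D\subseteq B$ and a natural basis $C$ of $A$ such that for every $c\in C$ there are $d\in D$ and $b\in B$ with $c=db$. Because both $d$ and $b$ lie in the natural basis $B$, orthogonality forces $d=b$ (otherwise $db=0$, contradicting $c\neq 0$); hence each $c\in C$ takes the form $c=b^{2}$ for some $b\in B$. A simple cardinality argument (distinct elements of $C$ come from distinct $b$'s, and $\Card(C)=\Card(B)=\dim(A)$) shows that $b\mapsto b^{2}$ is a bijection from $B$ onto $C$.

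Second, I would invoke the uniqueness of the natural basis implied by property $2$LI (Theorem \ref{C} and the proposition following it): $C$ must coincide with $B$ up to a permutation and a rescaling of its elements. Combined with the previous paragraph, this yields an expression $e_{j}^{2}=\alpha_{j}e_{\rho(j)}$ for every $e_{j}\in B$, where $\rho$ is a permutation of the index set and each $\alpha_{j}$ is non-zero; in other words, the structure matrix of $A$ relative to $B$ is a generalized permutation matrix.

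Finally, since $B$ ramifies inside itself towards a natural basis which, after these identifications allowed by uniqueness, is essentially $B$ itself, I would interpret the situation as $B$ ramifying inside itself towards itself and conclude by the previous Proposition that each $e\in B$ is a natural idempotent. The main obstacle I foresee is making precise the ``essentially $B$'' identification so that it genuinely reduces to the hypothesis of the previous Proposition; concretely, one must either verify that $\alpha_{j}=1$ and $\rho=\id$, possibly by re-applying the hypothesis to rescaled natural bases $B_{\mu}:=\{\mu_{i}e_{i}\}$ and exploiting the consistency constraints obtained from each such choice of $\mu$, or interpret ``natural idempotent'' in the spirit implicitly used by the previous Proposition's proof, permitting the canonical rescaling within the unique equivalence class of natural bases.
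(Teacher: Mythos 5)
The obstacle you flag at the end is a genuine gap, and it is exactly the point at which the paper's own one-line proof also breaks down, so neither of your proposed repairs can close it. Up to that point you follow the same route as the paper (uniqueness of the natural basis from property $2$LI via Theorem \ref{C}, then applying the ramification hypothesis to $B$ itself and reducing to the preceding proposition), but you are more careful: orthogonality only forces $d=b$, so all you get is $c=b^{2}$ for each element $c$ of the target natural basis, and with uniqueness up to $S_{n}\rtimes(\mathbb{K}^{\times})^{n}$ this yields $e_{j}^{2}=\alpha_{j}e_{\rho(j)}$ with $\alpha_{j}\neq 0$ and $\rho$ a permutation, i.e.\ a generalized permutation structure matrix. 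The paper instead asserts that $e=xb$ with $x,b\in B$ ``can only have the solution $x=e=b$''; the natural basis only gives $x=b$, and nothing in the hypotheses forces $b=e$, which is precisely the $\rho=\id$, $\alpha_{j}=1$ step you could not supply.

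That step cannot be supplied, because the statement fails as literally written. Take the two-dimensional evolution algebra with natural basis $B=\{e_{1},e_{2}\}$ and $e_{1}^{2}=e_{2},$ $e_{2}^{2}=e_{1}.$ It is non-degenerate and has property $2$LI (the squares $e_{2},e_{1}$ are linearly independent), and a direct computation with $uv=0$ shows every natural basis has the form $\{\mu_{1}e_{1},\mu_{2}e_{2}\}$ with $\mu_{1},\mu_{2}\neq0$; its squares $\{\mu_{1}^{2}e_{2},\mu_{2}^{2}e_{1}\}$ again form a natural basis, so every natural basis ramifies inside itself towards a natural basis in one iteration. Yet neither $e_{1}$ nor $e_{2}$ is idempotent, not even up to scalars, and the algebra has no natural idempotents at all. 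In particular, re-applying the hypothesis to the rescaled bases $B_{\mu}$ produces no contradiction in this example, so no consistency constraints will ever force $\rho=\id$ or $\alpha_{j}=1$; your generalized-permutation conclusion is the most the hypotheses yield. A correct version needs a stronger ramification hypothesis (for instance, requiring $c=dc$ for every $c$ in the basis, which together with orthogonality does give $d=c$ and hence $c^{2}=c$), and the same criticism applies to the preceding proposition on which the paper's proof of this corollary relies.
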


\begin{proof}
As $B$ is in particular a (natural) basis, our hypothesis implies that $B$ ramifies towards $B$ (as a consequence of the unicity) in one iteration. We fix $e\in B$ arbitrary. Then $e=xb$ with $x,b\in B$ only have the solution $x=e=b$ and, as $B$ rafimies inside itself towards itself in one iteration, the proposition is proved.
\end{proof}

Using exactly the same proof, we can obtain the next result.

\begin{corolario}
Let $A$ be an evolution algebra verifying that every (natural) basis $C$ of $A$ ramifies inside itself towards a fixed natural basis $B$ in one iteration. Then all the elements $e\in B$ are natural idempotents.
\end{corolario}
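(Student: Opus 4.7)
The plan is to transcribe verbatim the argument of the immediately preceding Proposition, exploiting the fact that the fixed natural basis $B$ is itself one of the natural bases to which the hypothesis applies. Since $B$ is a (natural) basis of $A$, taking $C := B$ in the hypothesis yields that $B$ ramifies inside itself towards $B$ in one iteration. Unfolding the definition, this gives a set $D_{1}\subseteq B$ such that for every $e\in B$ there exist $d\in D_{1}$ and $b\in B$ with $e=d\,b$.

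I would then fix $e\in B$ arbitrarily and study the equation $e=x\,b$ with $x,b\in B$. Orthogonality of the natural basis $B$ forces $xb=0$ whenever $x\neq b$, so any solution of $e=x\,b$ with $e\neq 0$ must already satisfy $x=b$, and hence $e=b^{2}$. Following the pattern of the preceding proposition, I would conclude that the only solution available in $B\times B$ is in fact $x=e=b$, so that $e^{2}=e$ and $e$ lies in the natural basis $B$; this is precisely the definition of natural idempotent. Since $e\in B$ was arbitrary, every element of $B$ is a natural idempotent, as required.

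The main obstacle, just as in the preceding Proposition, is the justification of the last step: orthogonality alone gives $x=b$ with $e=b^{2}$, and one still has to rule out that some element of $B$ distinct from $e$ squares to $e$. This has to be extracted from the ramification condition itself, combined with the finiteness of $B$ (since $A$ is finite-dimensional by the standing convention). The argument should proceed by noting that the squaring map $b\mapsto b^{2}$ restricted to the multiplier set $D_{1}\subseteq B$ must surject onto $B$, so by a counting argument the map is forced to be a bijection whose only fixed-point-compatible realization, given the form of the decomposition $e=d\,b$ with $d\in D_{1}$ and $b\in B$ already constrained to coincide, is the identity. Granting this step exactly as in the preceding Proposition, the conclusion that every $e\in B$ satisfies $e^{2}=e$ follows, which completes the proof.
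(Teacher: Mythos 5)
Your overall route is exactly the paper's: since $B$ is itself a natural basis of $A$, take $C:=B$ in the hypothesis to get that $B$ ramifies inside itself towards $B$ in one iteration, and then repeat the argument of the preceding proposition (the paper literally says the corollary follows ``using exactly the same proof''), namely: fix $e\in B$, observe that any writing $e=xb$ with $x,b\in B$ ``can only have the solution $x=e=b$'', and conclude $e^{2}=e$. So your reduction is the intended one, and the obstacle you single out is real: orthogonality of the natural basis only gives $x=b$ and hence $e=b^{2}$ for some $b\in B$, and neither the proposition's proof nor the corollary's offers any justification for the further claim $b=e$ --- the paper simply asserts it.

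Your proposed repair, however, does not close this gap. It is true that the ramification condition makes $d\mapsto d^{2}$ surject from $D_{1}\subseteq B$ onto $B$, and finiteness of $B$ then forces $D_{1}=B$ and bijectivity; but a bijection of a finite set need not be the identity, and nothing in the hypothesis excludes that squaring permutes $B$ in a nontrivial cycle. Concretely, let $B=\{e_{1},e_{2}\}$ with $e_{1}^{2}=e_{2}$ and $e_{2}^{2}=e_{1}$: this basis ramifies inside itself towards itself in one iteration, yet neither element is idempotent. Over $\mathbb{Z}_{3}$ the same algebra is non-degenerate and has the property $2$LI, so its natural bases are precisely the sets $\{\alpha e_{1},\beta e_{2}\}$ with $\alpha,\beta\neq0$; since $\alpha^{2}=\beta^{2}=1$ in $\mathbb{Z}_{3}$, every such basis ramifies inside itself towards $B$ in one iteration, so even the full hypothesis of this corollary is satisfied while its conclusion fails. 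Hence the step you flagged cannot be recovered by your bijection/``fixed-point'' argument (nor by any argument, without extra hypotheses ruling out such cyclic squaring), and the paper's own one-line proof makes the same unjustified leap at exactly this point.
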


\begin{definicion}
An evolution algebra $A$ is \textbf{$n,m$-natural for idempotents} if there exist $m$ natural bases with $n$ non-zero idempotents each one. An evolution algebra is \textbf{$n,m$-conatural for idempotents} if there exist $m$ non-zero idempotents which appear in $n$ natural bases each one. An evolution algebra $A$ is \textbf{$n$-innatural for idempotents} if every non-zero idempotent is in $n,$ and just $n,$ natural bases. An evolution algebra $A$ is \textbf{$n$-surnatural for idempotents} if every natural basis has $n,$ and just $n,$ non-zero idempotents. An evolution algebra $A$ is \textbf{$n,m$-binatural for idempotents} if every natural basis has $n,$ and just $n,$ non-zero idempotents and every non-zero idempotent is in $m,$ and just $m,$ natural bases. 
\end{definicion}

Immediately, we obtain the next proposition.

\begin{proposicion}
Let $A$ be an evolution algebra. Then $A$ is $n,m$-binatural for idempotents if and only if $A$ is $m$-innatural for idempotents and $n$-surnatural for idempotents. If $A$ has unique natural basis, then $A$ is $n,1$-natural if and only if $A$ is $1,n$-natural so, in this case, the unique basis $B$ has $n$ non-zero idempotents and there exist $n$ non-zero idempotents appearing in one natural basis (which can only be $B$) each one.
\end{proposicion}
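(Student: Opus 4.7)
The plan is to verify both assertions by straightforward unpacking of definitions, with a small observation needed for the subtlety around what ``unique natural basis'' means.

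First, I would address the biconditional that $A$ is $n,m$-binatural if and only if $A$ is both $m$-innatural and $n$-surnatural for idempotents. This is essentially a tautology: the defining clause of ``$n,m$-binatural for idempotents'' is literally the conjunction of (i) every natural basis has exactly $n$ non-zero idempotents, which is the definition of ``$n$-surnatural for idempotents,'' and (ii) every non-zero idempotent belongs to exactly $m$ natural bases, which is the definition of ``$m$-innatural for idempotents.'' Both directions of the equivalence follow at once by side-by-side comparison, so no real computation is involved.

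Next, for the second part, I would invoke the unique-natural-basis hypothesis together with Definition \ref{89}, fixing $B$ as the (essentially unique) natural basis. Then ``$A$ is $n,1$-natural for idempotents'' unfolds to ``there exists exactly one natural basis with $n$ non-zero idempotents,'' which under the uniqueness assumption says exactly that $B$ contains $n$ non-zero idempotents. Each of these $n$ idempotents then lies in precisely one natural basis, namely $B$, which is the content of ``$A$ is $1,n$-natural for idempotents'' read via the dual counting. The converse is symmetric: if there are $n$ idempotents each belonging to a single natural basis, that basis must be $B$, and so $B$ contains all of them, yielding that $B$ has $n$ non-zero idempotents. The final clause of the proposition is merely a restatement of these equivalent formulations.

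The only delicate point, which I expect to be the main obstacle rather than a genuine difficulty, is clarifying the sense of ``unique natural basis'' in Definition \ref{89}: strictly speaking, natural bases differ up to the action of $S_{n}\rtimes(K^{\times})^{n}$. One needs to verify that this action does not disrupt the counting of idempotents, and indeed for a non-zero idempotent $e$ the element $ke$ is idempotent only when $k=1$, since $(ke)^{2}=k^{2}e^{2}=k^{2}e$ must equal $ke$, forcing $k=1$ when $e\neq 0$. Thus non-trivial scalings do not produce extra non-zero idempotents, and working with $B$ as a literal representative of the unique natural basis is legitimate; with this observation the preceding argument goes through verbatim.
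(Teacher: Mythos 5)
Your proposal is correct and matches the paper, which states this proposition without proof as an immediate consequence of the definitions; your unpacking of ``$n,m$-binatural'' as the literal conjunction of ``$n$-surnatural'' and ``$m$-innatural,'' and of the unique-basis case, is exactly the intended argument. Your reading of ``$1,n$-natural'' via the dual (conatural) counting is the one the proposition's own final clause forces, and your observation that a non-zero idempotent $e$ stays idempotent under scaling only for $k=1$ is a sensible justification for working with a literal representative of the natural basis, which is unique only up to the action of $S_{n}\rtimes(K^{\times})^{n}$.
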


Also, using the previous corollary we have the following results.

\begin{corolario}\label{103}
Let $A$ be an evolution algebra of dimension $n$ verifying that every (natural) basis $C$ of $A$ ramifies inside itself towards a fixed natural basis $B$ in one iteration. Then $A$ is $1$-innatural and $n$-surnatural so $A$ is $n,1$-binatural.
\end{corolario}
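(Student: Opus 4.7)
The plan is to leverage the previous corollary (which, under exactly our hypothesis, yields that every element of $B$ is a natural idempotent) and then to study the shape of an arbitrary natural basis $C$ under the ramification hypothesis in order to control both its idempotent content and the number of natural bases in which each idempotent can sit. First I would apply the previous corollary to obtain that the fixed basis $B$ consists entirely of natural idempotents, so $B$ itself already contributes exactly $n$ non-zero idempotents of $A$; this gives a reference pool of natural idempotents against which to measure arbitrary natural bases.

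Next, to prove $n$-surnatural, I would fix an arbitrary natural basis $C$ and unpack the hypothesis that $C$ ramifies inside itself towards $B$ in one iteration: there exists $D\subseteq C$ such that every $b\in B$ can be written $b=dc$ with $d\in D$ and $c\in C$. Since $C$ is natural, $dc=0$ unless $d=c$, so in fact $b=d^{2}$ for some $d\in D$. Expanding $d=\sum_{b'\in B}\lambda_{b'}b'$ in the natural basis $B$ and using that every $b'\in B$ is an idempotent, one obtains $d^{2}=\sum_{b'}\lambda_{b'}^{2}b'$; comparison with $d^{2}=b$ forces $\lambda_{b'}=0$ for $b'\neq b$ and $\lambda_{b}^{2}=1$. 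Hence each such $d$ is a scalar multiple of a unique element of $B$, and the assignment $d\mapsto d^{2}$ is injective from $D$ into $B$; combined with surjectivity onto $B$ and the dimension bound $|D|\leq|C|=n=|B|$, this forces $D=C$ and the squaring map to be a bijection $C\to B$. Therefore every $c\in C$ lies on the line through some $b\in B$, and once one fixes (via the natural-basis conventions established in Definition \ref{89} and the surrounding discussion) that natural bases are identified up to scalar multiplication, $C$ can be taken to coincide with $B$ as a set. In particular, $C$ then has exactly $n$ non-zero idempotents, which is $n$-surnaturality.

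For $1$-innaturality, I would use the same analysis in reverse: the previous step shows that every natural basis of $A$ is forced (up to the allowed scalar and permutation moves from Definition \ref{89}) to coincide with $B$, so the natural idempotents of $A$ are exactly the elements of $B$, and each of them sits in a unique natural basis, namely $B$ itself. Having both $n$-surnaturality and $1$-innaturality, the statement that $A$ is $n,1$-binatural then follows immediately from Proposition \ref{102}, closing the argument.

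The main obstacle will be handling rigorously the scalar ambiguity that appears when passing from $d^{2}=b$ to the actual element $d$: the equation $\lambda_{b}^{2}=1$ admits in general two solutions in $\mathbb{K}$, so a naive reading of the argument allows $d=-b$, which is not idempotent and would break $n$-surnaturality in characteristic different from $2$. I expect this subtlety to be absorbed either by the convention that natural bases are considered modulo the action of $S_{n}\rtimes(\mathbb{K}^{\times})^{n}$ (cf.\ Definition \ref{89}), so that the $\pm$ ambiguity does not produce genuinely distinct natural bases, or by a sharper use of the ramification hypothesis to rule out the negative sign directly; this is the step where the proof requires the most care.
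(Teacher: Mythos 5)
The paper actually offers no argument here at all: Corollary \ref{103} is stated right after the sentence ``using the previous corollary we have the following results,'' so its intended proof is just the combination of the preceding (unlabeled, unproved) proposition on binaturality with the fact that all elements of the fixed basis $B$ are natural idempotents. Your computation therefore already goes further than the paper: the reduction $b=dc\Rightarrow d=c\Rightarrow b=d^{2}$, the expansion of $d$ in the natural basis $B$ of idempotents giving $d=\lambda_{b}b$ with $\lambda_{b}^{2}=1$, and the counting argument forcing $D=C$ and the squaring map to be a bijection $C\to B$, are all correct and constitute a genuine sharpening of what the text supplies.

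The problem is the step you yourself flag, and it is a real gap, not a technicality: the sign ambiguity cannot be ruled out by ``a sharper use of the ramification hypothesis,'' because for any $b\in B$ the set $C_{b}:=(B\smallsetminus\{b\})\cup\{-b\}$ is a natural basis which \emph{does} ramify inside itself towards $B$ in one iteration (indeed $(-b)(-b)=b^{2}=b$ and $b'=b'b'$ for every other $b'\in B$), yet when $\chara(\mathbb{K})\neq2$ the element $-b$ is not idempotent, since $(-b)^{2}=b\neq-b$. So, with the definitions of $n$-surnatural and $1$-innatural read literally (they count natural bases as sets and say nothing about rescaling), $C_{b}$ has only $n-1$ non-zero idempotents and the conclusion fails; already $\dim(A)=1$ over $\mathbb{Z}_{3}$ with $e^{2}=e$ and the basis $\{-e\}$ is a counterexample. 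Your first escape route is therefore the only viable one, but it is not available for free: the quotient by $S_{n}\rtimes(\mathbb{K}^{\times})^{n}$ appears in Definition \ref{89} for the purpose of defining uniqueness of the natural basis, and is not built into the definitions of innatural/surnatural, so a complete proof must either state that convention explicitly and redo the count at the level of equivalence classes of bases, or restrict to $\chara(\mathbb{K})=2$. Two smaller points: $1$-innaturality as defined quantifies over \emph{all} non-zero idempotents, including those lying in no natural basis at all (e.g.\ $e_{1}+e_{2}$ when $e_{1},e_{2}$ are orthogonal natural idempotents, which sits in $0$ natural bases), and your argument, like the paper's, only controls the idempotents belonging to $B$; and the proposition you invoke at the end carries no label in the paper, so the citation ``Proposition \ref{102}'' must be replaced by an explicit reference to the unlabeled proposition preceding the corollary.
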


We remember that every natural idempotent of an evolution algebra $A$ is a minimal idempotent generating an ideal of dimension $1$ of $A.$ Now we pass to relate the natural idempotents with the socle. First, we count how many natural idempotents we are able to find in the algebra.

\begin{definicion}\label{104}
Let $A$ be an evolution algebra. The \textbf{socle dimension of natural idempotency of $A$} is the number of different (subspaces of $A$ generated by) natural idempotents in $A$ and we denote it by $\sdni(A).$ On the other hand, the \textbf{socle dimension of non-natural idempotency of $A$} is the number of different minimal idempotents in $A$ which are not natural and we denote it by $\msdnonni(A).$
\end{definicion}

We will need some notation in what follows.

\begin{notacion}
Let $A$ be an evolution algebra. We divide the set $\MinId(A)$ of all the minimal ideals of $A$ in two disjoint subsets so $$\MinId(A)=\ExMinId(A)\cup\InMinId(A),$$ where $\ExMinId(A)$ is formed by all the minimal ideals $E$ of $A$ having a natural basis which can be extended to a natural basis of $A$ (i.e., extension evolution ideals) and $\InMinId(A)$ is the set formed by the minimal ideals of $A$ not having these kind of bases (i.e., the complementary of $\ExMinId(A)$ in $\MinId(A)$). We denote $\natidem(A)$ the set of natural idempotens of $A$ and $\mnonnatidem(A)$ the set of minimal idempotens which are not natural (i.e., the complement of $\natidem(A)$ in the set $\minidem(A)$ of all the minimal idempotens in $A$). Similarly we denote $\NonNatMinId(A)$ the set of all minimal ideals of $A$ which cannot be generated by a natural idempotent and $\NatMinId(A)$ the set of all (necessarily minimal) ideals of $A$ which are generated by a natural idempotent.
\end{notacion}

We note that every ideal $N\in\NatMinId(A)$ has necessarily dimension $1$ and is just the linear space generated by the natural idempotent generating it. Now we sum the subspaces generated by the natural idempotents in an evolution algebra.

\begin{definicion}\label{106}
Let $A$ be an evolution algebra over a field $\mathbb{K}.$ The \textbf{socle of natural idempotency of $A$}, denoted by $\soc\nid(A),$ is defined as the sum of all the $\mathbb{K}e,$ where $e$ is a natural idempotent in $A.$ That is, $$\soc\nid(A):=\sum_{e\in\natidem(A)}\linspan(\{e\}).$$
\end{definicion}

We will prove a proposition about the elements in $\NatMinId(A).$

\begin{proposicion}\label{107}
Let $A$ be an evolution algebra and $N\in\NatMinId(A).$ Then there exist only one idempotent generating $N.$
\end{proposicion}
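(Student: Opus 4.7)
The plan is to show that once we pass from an arbitrary element of $N$ to an idempotent one, we are forced by a one-dimensional scalar equation to land on a single candidate.

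First I would unpack the hypothesis $N\in\NatMinId(A)$: by definition there exists a natural idempotent $e\in A$ with $N=\id(\{e\})$. Then I would invoke Proposition \ref{85}, which tells us that for any natural idempotent $e$ of an evolution algebra, $\id(\{e\})=\mathbb{K}e$. Hence $N$ is a one-dimensional subspace, and every element of $N$ can be written uniquely as $\lambda e$ for some $\lambda\in\mathbb{K}$. In particular, since $N$ has dimension $1$, every nonzero element of $N$ generates $N$ as an ideal, so ``the idempotents that generate $N$'' coincide with ``the nonzero idempotent elements of $N$''.

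The second step would be the idempotency calculation inside $N$. Take an arbitrary candidate $\lambda e\in N$. Using the bilinearity of the multiplication in $A$ and the fact that $e^2=e$, I would compute
\begin{equation*}
(\lambda e)^2 = \lambda^2 e^2 = \lambda^2 e.
\end{equation*}
Imposing $(\lambda e)^2=\lambda e$ translates into the scalar equation $\lambda^2=\lambda$, i.e.\ $\lambda(\lambda-1)=0$, whose only solutions in the field $\mathbb{K}$ are $\lambda=0$ and $\lambda=1$. The value $\lambda=0$ yields the zero element, which does not generate $N$, so the only idempotent in $N$ that generates $N$ is $\lambda=1$, namely $e$ itself.

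I do not foresee any real obstacle: the argument is essentially immediate once Proposition \ref{85} reduces $N$ to a one-dimensional space, because then the whole question collapses to the elementary polynomial identity $\lambda^2=\lambda$ in $\mathbb{K}$. The only subtle point worth stating explicitly is that the uniqueness concerns idempotents generating $N$, so the trivial idempotent $0$ must be excluded from the count; beyond that, bilinearity of the product and the defining property $e^2=e$ of the natural idempotent $e$ are all that is used.
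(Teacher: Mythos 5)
Your proof is correct and follows essentially the same route as the paper: both arguments reduce to the fact that $N$ is one-dimensional (being generated by a natural idempotent) and then solve the scalar equation $\lambda^{2}=\lambda$ forced by idempotency, discarding $\lambda=0$. The only cosmetic difference is that you parametrize all idempotents of $N$ as multiples of the natural idempotent $e$, whereas the paper compares two putative idempotent generators directly; the underlying computation is identical.
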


\begin{proof}
Suppose that there exist two idempotents $e$ and $u$ generating $N.$ Then, as $\dim(N)=1$ for being generated by a natural idempotent, we have $e^{2}=e=ku.$ But $e$ is idempotent so $ku=e=e^{2}=ee=(ku)(ku)=k^{2}u^{2}=k^{2}u,$ which implies $k=k^{2}.$ As $k$ has to be non-zero, this implies $k=1.$ Thus, $e=u,$ as we wanted to prove.
\end{proof}

We have just proved that, for every $N\in\NatMinId(A),$ there exist only one idempotent generating $N.$ We will call this idempotent $e_{N}$ so $N=\id(\{e_{N}\})=\linspan(\{e_{N}\}).$ Then, $$\soc\nid(A)=\sum_{N\in\NatMinId(A)}\linspan(\{e_{N}\})=\sum_{N\in\NatMinId(A)}N.$$ We can also write $$\soc(A)=\left(\sum_{N\in\NatMinId(A)}N\right)+\left(\sum_{I\in\NonNatMinId(A)}I\right)$$ so $$\soc(A)=\soc\nid(A)+\left(\sum_{I\in\NonNatMinId(A)}I\right).$$

We note that $\mathbb{K}e=\linspan(\{e\})=\id(\{e\})=Ae.$ Thus, the socle dimension of natural idempotency of the evolution algebra $A$ is the number of (non-necessarily  linearly independent) summands in the socle of natural idempotency of $A.$ If those summands are all linearly independent, then we say that the socle dimension of natural idempotency of $A$ is \textbf{faithful} for the socle of natural idempotency of $A.$ If not, we define the \textbf{faithful socle dimension of natural idempotency of the socle of natural idempotency of $A$} as the maximal number of linearly independent summands in the definition of the socle of natural idempotency of $A,$ that is, the dimension of $\soc\nid(A)$ as a vector space. When all the natural idempotents of the algebra $A$ are in the same natural basis, the socle dimension of natural idempotency of $A$ is faithful for the socle of natural idempotency of $A$ because then every linear subspace $\linspan(\{e\})$ in the sum has to be linearly independent from the rest of summands.

Now we can prove an important theorem about the socle.

\begin{teorema}\label{108}
Let $A$ be an evolution-minimal-ideally simple algebra of dimension $n$ over a field $\mathbb{K}$ and verifying the property $2$LI. Define $\sdni(A)=s.$ Then $\dim(\sum_{N\in\NatMinId(A)}N)=s$ and, therefore, $\dim(\sum_{I\in\NonNatMinId(A)}I)\leq n-s.$ If, in addition, we suppose that $\mathbb{K}$ is an $m$-FSEANI for all $m\leq n-s,$ then, for every $I\in\NonNatMinId(A)$ there exist an strictly non-natural idempotent $a_{I}$ such that $I=\id(\{a_{I}\})$ and we can write $$\soc(A)=\left(\sum_{N\in\NatMinId(A)}\linspan(\{e_{i}\})\right)+\left(\sum_{I\in\NonNatMinId(A)}\id(\{a_{i}\})\right).$$
\end{teorema}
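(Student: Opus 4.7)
The plan is to establish the three pieces of the conclusion in sequence: first the dimension of the natural-idempotency part of the socle, then the bound on the complementary part via a direct-sum decomposition of $\soc(A),$ and finally the production of an idempotent generator for each non-natural minimal ideal through the $m$-FSEANI hypothesis.

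First I would fix a natural basis $B=\{e_{1},\dots,e_{n}\}$ of $A.$ The evolution-minimal-ideally simple hypothesis already forces $e_{i}^{2}\neq 0$ for every $i$ (otherwise $\mathbb{K}e_{i}$ would be a $1$-dimensional, hence minimal, ideal with zero product, violating simplicity), so $A$ is non-degenerate and, combined with property $2$LI, Theorem \ref{C} makes $B$ unique up to permutation and non-zero scalars. This forces every natural idempotent to take the form $\frac{1}{\alpha_{j}}e_{j}$ for some index $j$ with $e_{j}^{2}=\alpha_{j}e_{j}$ and $\alpha_{j}\neq 0.$ Together with Proposition \ref{107}, this puts $\NatMinId(A)$ in bijection with the corresponding subset $S\subseteq\{1,\dots,n\},$ identifies $\sum_{N\in\NatMinId(A)}N$ with $\bigoplus_{j\in S}\mathbb{K}e_{j},$ and delivers the dimension $s=|S|.$

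Next I would prove that $\soc(A)$ decomposes as a direct sum of all its minimal ideals. The key observation is that for any two distinct minimal ideals $M\neq M'$ one has $M\cap M'=\{0\}$ by minimality applied to both, and hence $MM'\subseteq M\cap M'=\{0\}.$ If $\sum_{M\in\MinId(A)}M$ were not direct, a minimal-counterexample reduction would produce distinct minimal ideals $M_{1},\dots,M_{k}$ with $M_{1}\subseteq M_{2}+\cdots+M_{k};$ but then $M_{1}^{2}\subseteq M_{1}\cdot(M_{2}+\cdots+M_{k})=\{0\},$ contradicting the simplicity of $M_{1}$ as an algebra (which is baked into the evolution-minimal-ideally simple hypothesis). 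Splitting this direct sum along $\MinId(A)=\NatMinId(A)\sqcup\NonNatMinId(A)$ and using $\dim(N)=1$ for every $N\in\NatMinId(A),$ I would then obtain
\begin{equation*}
\dim\Bigl(\sum_{I\in\NonNatMinId(A)}I\Bigr)=\sum_{I}\dim(I)=\dim(\soc(A))-s\leq n-s.
\end{equation*}

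Finally, for each $I\in\NonNatMinId(A)$ the inequality $\dim(I)\leq n-s$ makes the $m$-FSEANI hypothesis applicable to $I,$ so the machinery of Construction \ref{B}, Theorem \ref{THE} and Definition \ref{58} supplies, exactly as in the proof of Theorem \ref{80}, a non-zero idempotent $a_{I}\in I;$ minimality of $I$ forces $I=\id(\{a_{I}\}),$ and $a_{I}$ cannot be natural, since that would place $I$ in $\NatMinId(A).$ Writing $\soc(A)$ along the partition of $\MinId(A)$ and using Proposition \ref{107} to select the unique natural idempotent $e_{N}$ in each $N$ then yields the claimed decomposition. The main obstacle is the direct-sum step for $\soc(A):$ the analogous statement is false for minimal \emph{left} ideals of associative algebras (e.g.\ in $M_{n}(\mathbb{K})$), so the argument must genuinely use that each minimal ideal here is \emph{simple as an algebra}, i.e.\ satisfies $M^{2}\neq\{0\},$ in order to rule out the configuration $M_{1}\subseteq M_{2}+\cdots+M_{k}.$ Everything else becomes essentially routine once this decomposition is in hand.
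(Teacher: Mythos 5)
Your proposal is correct and follows the same overall skeleton as the paper's proof: property $2$LI gives the unique natural basis, the natural idempotents are identified with (scalar multiples of) basis vectors so that $\dim(\sum_{N\in\NatMinId(A)}N)=s,$ and the FSEANI hypothesis applied to each non-natural minimal ideal, which is a simple evolution algebra of dimension at most $n-s,$ yields the idempotent generators $a_{I}$ exactly as in Theorem \ref{80}. Where you genuinely add something is the middle step. The paper passes from $\dim(A)=n$ directly to $\dim(\sum_{I\in\NonNatMinId(A)}I)\leq n-s,$ which tacitly requires the sum of the non-natural minimal ideals to be independent of the span of the natural idempotents; your lemma that the sum of all minimal ideals is direct --- using that two distinct minimal ideals annihilate each other because their product lies in their (zero) intersection, and that each minimal ideal has non-zero square because it is simple as an algebra --- is precisely the missing justification, and it is the correct place where the evolution-minimal-ideally simple hypothesis does real work. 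You also observe that this hypothesis forces $e_{i}^{2}\neq 0$ for every basis vector, i.e. non-degeneracy, which is needed (and left unsaid in the paper) to invoke Theorem \ref{C} and obtain the unique natural basis from $2$LI. So the route is the same, but your version closes two small gaps that the paper's proof leaves implicit.
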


\begin{proof}
As the algebra $A$ verifies the property $2$LI, the algebra $A$ has a unique (up to permutations and products by scalars of $\mathbb{K}$) natural basis $B.$ Then all the $s$ natural idempotents are in $B.$ Take the set of natural idempotents $S\subseteq B.$ Hence, $\sum_{N\in\NatMinId(A)}N=\sum_{N\in\NatMinId(A)}\linspan(\{e_{i}\})=\linspan(S)$ so $\dim(\sum_{N\in\NatMinId(A)}N)=s.$ As $\dim(A)=n,$ this implies $\dim(\sum_{I\in\NonNatMinId(A)}I)\leq n-s.$ Now, if, in addition, we have that $\mathbb{K}$ is an $m$-FSEANI for all $m\leq n-s,$ then, as every $I\in\NonNatMinId(A)$ is simple ($A$ is an evolution-minimal-ideally simple algebra) and verifies $\dim(I)\leq n-s,$ we obtain that, for every $I\in\NonNatMinId(A),$ there exist an idempotent $a_{I}$ such that $I=\id(\{a_{I}\})$ and we can write $$\soc(A)=\left(\sum_{N\in\NatMinId(A)}\linspan(\{e_{i}\})\right)+\left(\sum_{I\in\NonNatMinId(A)}\id(\{a_{i}\})\right).$$\end{proof}

We hope that the notions introduced along this work can serve in the future as useful tools to extend and pose new results having a flavours similar to the previous theorem in the theory and study of evolution algebras and beyond. It seems particularly important to find ways and constraints which could ensure that the behaviour of the part of the socle expressed by the rightmost sum of the expression above is under our control and admits nicer representations in order to get closer to a framework compatible with an adequate classification of evolution algebras.

\printbibliography[title=References]\addcontentsline{toc}{chapter}{References}

\end{document}